\numberwithin{equation}{section}
\newtheorem{thm}{Theorem}[section]
\newtheorem{definition}[thm]{Definition}
\newtheorem{lem}[thm]{Lemma}
\newtheorem{cor}[thm]{Corollary}
\newtheorem{assumption}[thm]{Assumption}
\newtheorem{remark}{Remark}
\theoremstyle{remark}
\newcommand{\ip}[1] {\langle #1 \rangle } 
\newcommand{\norm}[1] {\left \| #1 \right \|} 
\newcommand{\Real}{{\mathbb R}} 
\newcommand{\Prob} {{\bf{Pr}}} 
\newcommand{\Exp}{{\bf{E}}} 
\newcommand{\Stepsize}[2]{\mathcal{A}_{#1}^{#2}} 
\newcommand{\stepsize}[2]{\alpha_{#1}^{#2}}
\newcommand{\pf}{p_f} 
\newcommand{\pg}{p_g} 
\newcommand{\varConstF}{\kappa_f} 
\newcommand{\constF}{\varepsilon_f} 
\newcommand{\constG}{\kappa_g} 
\newcommand{\Control}[2]{\Delta_{#1}^{#2}} 
\newcommand{\control}[2]{\delta_{#1}^{#2}} 
\newcommand{\success}{1_{\text{Succ}}} 
\newcommand{\notsuccess}{1_{\text{Succ}^c}} 
\newcommand{\True}{1_{\text{R}}} 
\newcommand{\notTrue}{1_{\text{U}}} 
\newcommand{\goodModel}{I_k}
\newcommand{\goodFunction}{J_k}
\newcommand{\pastone}{\mathcal{F}_{k-1}^{G \cdot F}}
\newcommand{\pasthalf}{\mathcal{F}_{k-1/2}^{G \cdot F}}
\title{A Stochastic Line Search Method with Convergence Rate Analysis}
\author{ Courtney Paquette\thanks{Department of Industrial and Systems Engineering, Lehigh University,
Harold S. Mohler Laboratory, 200 West Packer Avenue, Bethlehem, PA 18015-1582, USA.
 {\tt cop318@lehigh.edu}.
 The work of this author was partially supported by NSF TRIPODS Grant 17-40796 and DMS 18-03289.}
\and Katya Scheinberg\thanks{Department of Industrial and Systems Engineering, Lehigh University,
Harold S. Mohler Laboratory, 200 West Packer Avenue, Bethlehem, PA 18015-1582, USA.
{\tt katyas@lehigh.edu}. The work of this author was  partially supported by NSF Grants CCF 16-18717 and TRIPODS 17-40796,
and DARPA Lagrange award HR-001117S0039. } 
}
\begin{document}

\maketitle

\begin{abstract} For deterministic optimization, line-search methods
  augment algorithms by providing stability and improved efficiency. We
  adapt a classical backtracking Armijo line-search to the
stochastic optimization setting. While traditional line-search relies on exact computations of the gradient and values of the objective function,
our method assumes that these values are available up to some
dynamically adjusted accuracy which holds with some sufficiently
large, but fixed, probability. We show the
expected number of iterations to reach a near stationary point matches
the worst-case efficiency of typical first-order methods, while for
convex and strongly convex objective, it achieves rates
of deterministic gradient descent in function values. 
\end{abstract}

\section{Introduction}
In this paper we consider the classical  stochastic optimization problem
\begin{align}
\min_{x \in \Real^n} \left\{ f(x) = \mathbb{E} [ \tilde f(x;\xi) ] \right\},  \label{main_prob_expected_risk}
\end{align}
where $\xi$ is a random variable  obeying some distribution. In the case of empirical risk minimization with a finite training set, $\xi_i$ is a random variable that is defined
 by a single random sample drawn uniformly from the training set.
More generally $\xi$ may represents a sample or a set of samples
drawn from the data distribution. 

The most widely used method to solve  \eqref{main_prob_expected_risk} is the stochastic gradient descent (SGD) \cite{RM1951}. Due to its low iteration cost, SGD is often preferred to the standard gradient descent
(GD) method for empirical risk minimization. Despite the prevalent use of SGD, it has known challenges and inefficiencies. First, the direction may not represent a descent direction, and second, the method is sensitive to the step-size (learning rate) which is often poorly overestimated. Various authors have attempted to address this last issue, see \cite{adaGrad,George2006,pmlr-v28-hennig13,adam}. Motivated by these facts, we turn to the deterministic optimization approach for adaptively selecting step sizes - GD with Armijo back-tracking line-search. 

\paragraph{Related work.} In general, GD with back-tracking requires computing a full gradient \textit{and} function evaluation - too expensive of an operation for the general problem \eqref{main_prob_expected_risk}. On the other hand, the per-iteration convergence rate for GD is superior to SGD making it an attractive alternative.
Several works have attempted to transfer ideas from deterministic GD  to the stochastic setting with the intent of diminishing the gradient computation, by using dynamic gradient sampling, e.g. \cite{RByrd_etal_2012, Schmidt,   2014pasglyetal}. 
However, these works address only convex setting. Moreover for them to obtain convergence rates matching those of GD in expectation, a small constant step-size must be known in advance and the sample size needs to be increased at a pre-described rate thus decreasing the variance of gradient estimates.  
Recently, in \cite{Bollapragada2017} an adaptive sample size selection strategy was proposed where sample size is selected based on the reduction of the gradient (and not pre-described). For convergence rates to be derived, however, an assumption has to be made that these sample sizes can be selected based on the size of the true gradient, which is, of course, unknown. 
In \cite{Tripuraneni2017} a second-order method that subsamples gradient and Hessian is proposed, however, the sample size is simply assumed to be sufficiently large, so that essentially, the method behaves as a deterministic inexact method with high probability. 

In  \cite{Bollapragada2017} and \cite{Schmidt} a practical back-tracking line search is proposed, combined with the their sample size selection. In both cases the backtracking is based on Armijo line search condition applied to function estimates that are computed on the same batch as the gradient estimates and is essentially a heuristic. A very different type of line-search based on probabilistic Wolfe condition is proposed in \cite{Prob_line_search}, however, it aims at improving step size selection for SGD and has no theoretical guarantees. 

\paragraph{Our contribution.}  In this work we propose an adaptive backtracking line-search method, where the sample sizes for  gradient and function estimates are  chosen \textit{adaptively} using \textit{knowable quantities} along with the step-size. We show that this method converges to the optimal solution \textit{ with probability one} and derive \textit{strong convergence rates} that match those of the deterministic gradient descent methods in the nonconvex $O(\varepsilon^{-2})$, convex $O(\varepsilon^{-1})$, and strongly convex $O(\log(\varepsilon^{-1}))$ cases.  This paper offers the first stochastic line search method with convergence rates analysis, and is the first to provide convergence rates analysis
for adaptive sample size selection   based on \textit{knowable quantities}.   

\paragraph{Background.} There are many types of (deterministic) line-search
methods, see \cite[Chapter 3]{Nocedal_Wright}, but all share a common
philosophy. First, at each iteration, the method computes a search direction
$d_k$ by \textit{e.g.} the gradient or (quasi) Newton
directions. Next, they determine how far to move in the direction
through the univariate function, $\phi(\alpha) = f(x_k +
  \alpha d_k)$, to find the stepsize $\alpha_k$. Typical
  line-searches try out a sequences
  of potential values for the stepsize, accepting $\alpha$ once some
  verifiable criteria becomes satisfied. One popular line-search criteria specifies
an acceptable step length should give \textit{sufficient decrease} in
the objective function $f$:
\begin{equation} \label{eq: armijo} \text{(Armijo condition \cite{armijo})} \qquad f(x_k+\alpha d_k) \le f(x_k) -
  \theta \alpha \norm{\nabla f(x_k)}^2,\end{equation}
where the constant $\theta \in (0,1)$ is chosen by the user and $d_k =
-\nabla f(x_k)$. Larger
step sizes imply larger gains towards optimality and lead to fewer
overall iterations. When step sizes get too small or worse $0$, no
progress is made and the algorithm stagnates. A popular way to
systematically search the domain of $\alpha$ while simultaneously
preventing small step sizes is backtracking. Backtracking starts with
an overestimate of $\alpha$ and
decreases it until \eqref{eq: armijo} becomes true. Our
exposition is on a stochastic version of backtracking using the
stochastic gradient estimate as a search direction and stochastic function estimates in \eqref{eq: armijo}. In the remainder of the paper, all random quantities will 
be denoted by capitalized letters and their respective realizations by corresponding lower case letters.

\section{Stochastic back-tracking line search method}

We present here our main algorithm for GD with back-tracking line
search. We impose the standard assumption on the objective function. 
\begin{assumption}\label{assumpt: objective_function} \rm{We assume
    that all iterates $x_k$ of Algorithm~\ref{alg:line_search_method}
    satisfy $x_k \in \Omega$ where $\Omega$ is a set in
  $\mathbb{R}^n$. Moreover, the gradient of $f$ is $L$-Lipschitz continuous for all $x \in \Omega$ and that 
\[f_{\min} \le f(x), \qquad \text{for all $x \in \Omega$}.\]
}
\end{assumption}

\subsection{Outline of method} At each iteration, our scheme computes
a random direction $g_k$ via \textit{e.g.}  a minibatch stochastic
gradient estimate or sampling the function $f(x)$ itself and using finite
differences. Then, we compute stochastic function estimates at the current
iterate and prospective new iterate, resp. $f_k^0$ and $f_k^s$. We check the Armijo condition \cite{armijo} using the \textit{stochastic estimates}
\begin{equation} \label{eq: armijo_stoch} \text{(Stochastic) Armijo}
  \qquad f_k^s \le f_k^0 - \theta \stepsize{k}{} \norm{g_k}^2. \end{equation}
If \eqref{eq: armijo_stoch} holds, the next iterate becomes $x_{k+1} = x_k -
\stepsize{k}{} g_k$ and stepsize $\stepsize{k}{}$ increases; otherwise
$x_{k+1}=x_k$ and $\stepsize{k}{}$ decreases, as is typical in
(deterministic) back-tracking line searches.  

Algorithm \ref{alg:line_search_method} describes our method.\footnote{We state the algorithm using the lower case notation to  represent a realization of the algorithm} Unlike classical back-tracking line search, there is an additional
control, $\control{k}{}$, which serves as a guess of the true function decrease
and controls the accuracy of the function estimates. We discuss this further next. 
	\begin{algorithm}[t!]
		\label{alg:line_search_method}
          \textbf{Initialization:} Choose constants $\gamma >1$,
          $\theta \in (0,1)$ and $\stepsize{\max}{}$. Pick
          initial point $x_0$, $\stepsize{0}{} =\gamma^{j_0} \stepsize{\max}{}$ for some $j_0\leq0$, and $\control{0}{}$. \\
\textit{Repeat for $k = 0, 1, \hdots $}
\begin{enumerate}
\begin{minipage}{0.85 \textwidth} \item \textbf{Compute a gradient estimate} Based on $\stepsize{k}{} $ compute a gradient estimate $g_k$. Set the step $s_k = -\stepsize{k}{} g_k$ \end{minipage}
\begin{minipage}{0.85 \textwidth} \item \textbf{Compute function estimates} Based on $\control{k}{}$, $g_k$ and $\stepsize{k}{} $ obtain estimates of $f_k^0$ and
  $f_k^s$ of $f(x_k)$ and $f(x_k+s_k)$ respectively. \end{minipage}
\item \textbf{Check sufficient decrease}\\
Check if
\begin{minipage}{0.75 \textwidth} \begin{equation} \label{eq:
      sufficient_decrease} f_k^s \le f_k^0 -\stepsize{k}{} \theta
    \norm{g_k}^2. \end{equation} \end{minipage}
\item \textbf{Successful step}\\
If \eqref{eq: sufficient_decrease} set $x_{k+1} = x_k-\stepsize{k}{}
g_k$ and $\stepsize{k+1}{} = \min\{ \stepsize{\max}{},
\gamma\stepsize{k}{}\}$.
\begin{itemize}
\item \textbf{Reliable step}: If $\stepsize{k}{} \norm{g_k}^2 \ge
    \control{k}{2}$, then increase $\control{k+1}{2} = \gamma
    \control{k}{2}$. 
\item \textbf{Unreliable step}: If $\stepsize{k}{} \norm{g_k}^2 <
  \control{k}{2}$, then decrease $\control{k+1}{2} = \gamma^{-1}\control{k}{2}$.
\end{itemize}
\item \textbf{Unsuccessful step}\\
Otherwise, set $x_{k+1} = x_k$, $\stepsize{k+1}{} = \gamma^{-1}
\stepsize{k}{}$, and $\control{k+1}{2} = \gamma^{-1} \control{k}{2}$. 
\end{enumerate}
Let $k = k+1$. 
		\caption{Line search method}
	\end{algorithm}
	\bigskip

\paragraph{Challenges with randomized line-search.} Due to the
stochasticity of the gradient and/or function values, two major
challenges result:
\begin{itemize}
\item a series of erroneous unsuccessful steps cause
  $\Stepsize{k}{}$ to become arbitrarily small;
\item steps may falsely satisfy \eqref{eq: armijo_stoch} leading to
  objective value at the next iteration arbitrarily larger than the current
  iterate.
\end{itemize}
Convergence proofs for deterministic line searches rely on the fact that neither
of the above problems arise. Our approach
controls the probability with which the random gradients and function values are representative of their
true counterparts. When this probability is large enough, the method tends to make successful steps when $\Stepsize{k}{}$ is sufficiently small, hence $\Stepsize{k}{}$ behaves like a random walk with an upward drift  thus staying away from $0$. 

Yet, even when the
probability of good gradients/function estimates is near 1, it is not
guaranteed that $\Exp(f(X_{k+1})|X_{k}) < f(X_k)$ holds at each
iteration due to the second issue - possible arbitrary increase of the objective. 
Since random gradient may not be representative of the true 
gradient the  function estimate accuracy and thus the 
 expected improvement needs to be  controlled by a different quantity, $\Control{k}{2}$. 
When the predicted decrease in the true function
matches the expected function estimate accuracy  ($\Control{k}{2} \le
\Stepsize{k}{}\norm{G_k}^2$), we call the step reliable and increase
the parameter $\Control{k}{2}$ for the next iteration; otherwise our prediction does not
match the expectation and we decrease $\Control{k}{2}$. 

 Moreover, unlike the typical stochastic
convergence rate analysis, which bounds expected improvement  in either
$\Exp(\|\nabla f(x)\|) $ or $\Exp(f(x) -f_{\max} )$ after a
given number of iteration, our convergence rate analysis bounds the
total expected number of steps that the algorithm takes before either
$\|\nabla f(x)\|\leq \varepsilon$ or $f(x) -f_{\max}\leq \varepsilon$  is reached. Our results rely on a stochastic
process framework introduced and analyzed in \cite{TR_prob_model} to provide convergence rates for stochastic trust region method. 



\subsection{Random gradient and function estimates} \label{sec:rand_grad_function} \paragraph{Overview.} At each iteration,
we compute a stochastic gradient and stochastic function
values. With probability $\pg$, the random direction $G_k$ is close to
the true gradient. We measure closeness or accuracy of the random
direction using the current step length, which is a known quantity. 
This procedure  \textit{naturally adapts} the required accuracy as the algorithm progresses. 
As the
steps get shorter (i.e. either the gradient gets smaller or the step-size parameter does), we require the
accuracy to increase, but the probability $\pg$ of encountering a good gradient $G_k$
at any iteration is the same. 

A similar procedure applies to function
estimates, $F_k^0$ and $F_k^s$. The accuracy of the function estimates
to the true function values at the points $x_k$ and $x_{k+1}$
are tied to the size of the step, $\mathcal{A}_k \norm{G_k}$. At each
iteration, there is a probability $\pf$ of obtaining good function
estimates. By choosing the probabilities of good gradient and
estimates, we show Algorithm~\ref{alg:line_search_method}
converges. To formalize this procedure, we introduce the following.

\paragraph{Notation and definitions.} Algorithm~\ref{alg:line_search_method} generates a random process $\{G_k, X_k, \Stepsize{k}{}, \Control{k}{}, S_k, F_k^0, F_k^s\}$, in what follows we will denote all random quantities by capital letters and their realization by small letters. 
Hence random gradient estimate is denoted by $G_k$ and its realizations - by $g_k = G_k(\omega)$. Similarly,
let the random quantities $x_k = X_k(\omega)$ (iterates), $\stepsize{k}{} =
\Stepsize{k}{}(\omega)$ (stepsize), control size $\Control{k}{}(\omega) =
\control{k}{}$, and $s_k = S_k(\omega)$ (step)
denote their respective realizations. Similarly, we let $\{F_k^0,
F_k^s\}$ denote estimates of $f(X_k)$ and $f(X_k + S_k)$, with their
realizations denoted by $f_k^0 = F_k^0(\omega)$ and $f_k^s =
F_k^s(\omega)$. Our goal
is to show that under some assumptions on $G_k$ and $\{F_k^0, F_k^s\}$
the resulting stochastic process convergences with probability one and
at an appropriate rate. In particular, we assume that the estimates $G_k$ and  $F_k^0$ and $F_k^s$ are
sufficiently accurate with sufficiently high probability, conditioned
on the past.

To formalize the conditioning on the past, let $\mathcal{F}_{k-1}^{G
  \cdot F}$ denote the $\sigma$-algebra generated by the random
variables $G_0, G_1, \hdots, G_{k-1}$ and $F_0^0, F_0^s, F_1^0, F_1^s,
\hdots, F_{k-1}^0, F_{k-1}^s$ and let $\mathcal{F}_{k-1/2}^{G \cdot
  F}$ denote the $\sigma$-algebra generated by the random variables
$G_0, G_1, \hdots, G_k$ and $F_0^0, F_0^s, F_1^0, F_1^s,
\hdots, F_{k-1}^0, F_{k-1}^s$. For completeness, we set
$\mathcal{F}_{-1}^{G \cdot F} = \sigma(x_0)$. As a result, we have that
$\mathcal{F}_k^{G \cdot F}$ for $k \ge -1$ is a filtration. By construction of the random variables $X_k$ and
$\Stepsize{k}{}$ in Algorithm~\ref{alg:line_search_method}, we see $\Exp [ X_k |
\mathcal{F}_{k-1}^{G \cdot F} ] = X_k$ and $\Exp [\Stepsize{k}{} |
\mathcal{F}_{k-1}^{G \cdot F}] = \Stepsize{k}{}$ for all $k \ge 0$. 

We measure accuracy of the gradient estimates $G_k$ and function estimates $F_k^0$ and
$F_k^s$ using the following definitions.

\begin{definition}\label{def:random_good_dir}
\rm{We say that a sequence of random  directions $\{ G_k\}$ is $(\pg)$-probabilistically
 $\constG$-sufficiently accurate for Algorithm \ref{alg:line_search_method} for the corresponding sequence $\{\Stepsize{k}{}, X_k\}$, if there exists a constant $\constG>0$,  such that the events
\[\goodModel = \{\|G_k-\nabla f(X_k)\|\leq \constG \Stepsize{k}{} \|G_k\|\} \]
satisfy the conditions\footnote{Given a measurable set $A$, we use
  $1_{A}$ as the indicator function for the set $A$; $1_{A} =1$ if $\omega \in A$ and
  $0$ otherwise.}
\[ \Prob(\goodModel | \pastone) = \Exp [1_{\goodModel} |
  \pastone] \ge p_g\]}
\end{definition}

%

In addition to sufficiently accurate gradients, we require estimates on
the function values $f(x_k)$ and $f(x_k + s_k)$ to also be sufficiently
accurate. 

\begin{definition}\rm{ A sequence of random estimates $\{F_k^0,
  F_k^s\}$ is said to be \textit{$\pf$-probabilistically
  $\constF$-accurate} with respect to the corresponding sequence
  $\{X_k, \Stepsize{k}{}, S_k\}$ if the events
\[\goodFunction = \{ |F_k^0-f(x_k)| \le \constF \Stepsize{k}{2} \norm{G_k}^2 \quad \text{and}
  \quad |F_k^s - f(x_k + s_k)| \le \constF \Stepsize{k}{2} \norm{G_k}^2\}.\]
satisfy the condition
\[\Prob( \goodFunction |\pasthalf) = \Exp [1_{\goodFunction} |
  \pasthalf ] \ge \pf.\]
  }
\end{definition}
We note here that the filtration $\pasthalf$ includes $
\Stepsize{k}{}$ and $G_k$; hence the accuracy of the estimates is
measured with respect to fixed quantities. Next, we state the key assumption on the nature of the stochastic information in Algorithm~\ref{alg:line_search_method}.

\begin{assumption} \label{assumpt:key} \rm{The following hold for the
    quantities in the algorithm:
\begin{enumerate}[(i)]
\item The sequence of random gradients $G_k$ generated by
  Algorithm~\ref{alg:line_search_method} is $\pg$-probabilistically 
 $\constG$-sufficiently accurate
  for some sufficiently large $\pg \in (0,1]$. 
\item The sequence of estimates $\{F_k^0, F_k^s\}$ generated by
  Algorithm~\ref{alg:line_search_method} is $\pf$-probabilistically
  $\constF$-accurate estimates for some $\constF \le
  \frac{\theta}{4 \alpha_{\max}}$ and sufficiently large $\pf \in
  (0,1]$.
\item The sequence of estimates $\{F_k^0, F_k^s\}$ generated by
  Algorithm~\ref{alg:line_search_method} satisfies a $\varConstF$-variance condition
  for all $k \ge 0$\footnote{We implicitly assume $|F_k^s-f(X_k+S_k)|^2$
  and $|F_k^0-f(X_k)|^2$ are integrable for all $k$; thus it is
  straightforward to deduce $|F_k^s-f(X_k+S_k)|$ and
  $|F_k^0-f(X_k)|$ are integrable for all $k$.}, 
\begin{equation} \label{eq: variance} \begin{aligned} 
&\Exp [ |F_k^s-f(X_k + S_k)|^2 | \pasthalf] \le
  \max \{\varConstF^2 \Stepsize{k}{2} \norm{\nabla f(X_k)}^4, \theta^2
  \Control{k}{4} \}\\
 \text{and} \qquad &\Exp [|F_k^0-f(X_k)|^2 | \pasthalf] \le \max
                                                           \{\varConstF^2
                                                           \Stepsize{k}{2}
                                                           \norm{\nabla
                                                           f(X_k)}^4,
                                                           \theta^2
                                                           \Control{k}{4} \}.
\end{aligned}
\end{equation}
\end{enumerate}
}
\end{assumption}
A simple calculation shows that under Assumption \ref{assumpt:key} the following hold
\[ \Exp [1_{\goodModel \cap \goodFunction}| \pastone]  \ge \pg \pf,\quad
\Exp [1_{\goodModel^c \cap \goodFunction} | \pastone]   \le
                                                               1-\pg,
\quad \text{and} \quad \Exp [1_{\goodFunction^c} | \pastone]  \le 1-\pf.
\]
\begin{remark} \label{rmk:constants} \rm{We are interested in deriving
    convergence results for the case when $\constG$ may be large. For
    the rest of the exposition, without loss of generality $\constG
    \ge 2$. It clear if $\constG$ happens to be smaller, somewhat
    better bounds that the ones we derive here will result since the
    gradients give tighter approximations of the true
    gradient.  We are interested in deriving bound for the case
    when $\constG$ is large. Equation~\eqref{eq: variance} includes the maximum of
    two terms - one of the terms $\norm{\nabla f(X_k)}$ is
    unknown. When one posesses external knowledge of $\norm{\nabla f(X_k)}$,
    one could use this value. This is particularly useful  when $\norm{\nabla
      f(X_k)}$ is big since it allows large variance in the function
    estimates, for example assumption that $\norm{\nabla
      f(X_k)}\geq \varepsilon$ implies that this variance does not have to be driven to zero, before
       the algorithm reaches a desired accuracy. Yet, for convergence and since a useful lower bound
       on $\norm{\nabla f(X_k)}$ may be unknown, we
    include the parameter $\Control{k}{}$ as a way to adaptively control the variance. As such $\varConstF$ should be small, in fact, can be set
    equal to $0$. The analysis can be performed for any other values of the above constants - the choices here are for simplicity and convenience. }\end{remark}

This assumption on the accuracy of the gradient and function estimates
is key in our convergence rate analysis. We derive specific bounds on
$\pg$ and $\pf$ under which these rates would hold. We note here that
if $\pf=1$ then Assumption \ref{assumpt:key}(iii) is not needed and
condition $\pg>1/2$ is sufficient for the convergence results. This
case can be considered as an extension of results in
\cite{CartisScheinberg}. Before concluding this section, we state a result showing the
relationship between the variance assumption on the function values
and the probability of inaccurate estimates.

\begin{lem} \label{lem:uniform_integrable} Let Assumption~
  \ref{assumpt:key} hold. Suppose $\{X_k, G_k, F_k^0, F_k^s,
  \Stepsize{k}{}, \Control{k}{}\}$ is a random process generated by
  Algorithm~\ref{alg:line_search_method} and $\{F_k^0, F_k^s\}$ are
  $\pf$-probabilistically accurate estimates. Then for every $k \ge
  0$ we have
\begin{align*}
&\Exp [1_{\goodFunction^c} |F_k^s-f(X_k + S_k)|~| \pasthalf ] \le \left ( 1-\pf \right )^{1/2} \max \{ \varConstF
  \Stepsize{k}{}\norm{\nabla f(X_k)}^2, \theta \Control{k}{2} \}\\
\text{and} \qquad &\Exp [1_{\goodFunction^c} |F_k^0-f(X_k)|~| \pasthalf] \le \left ( 1-\pf \right )^{1/2} \max \{ \varConstF
  \Stepsize{k}{}\norm{\nabla f(X_k)}^2, \theta \Control{k}{2} \}.
\end{align*}
\end{lem}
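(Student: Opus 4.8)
The plan is to apply the conditional Cauchy--Schwarz inequality and then invoke the variance bound of Assumption~\ref{assumpt:key}(iii). I will carry out the argument for the estimate $F_k^s$; the argument for $F_k^0$ is word-for-word identical after replacing $|F_k^s-f(X_k+S_k)|$ by $|F_k^0-f(X_k)|$. First I would note that the square-integrability hypothesis recorded in the footnote to Assumption~\ref{assumpt:key}(iii) guarantees that $|F_k^s-f(X_k+S_k)|$ is conditionally square-integrable and that $1_{\goodFunction^c}$ is bounded, so the conditional Cauchy--Schwarz step below is valid and all quantities are finite.

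With this in hand I would write
\[
\Exp[\,1_{\goodFunction^c}\,|F_k^s-f(X_k+S_k)|\mid\pasthalf\,]
\le \left(\Exp[\,1_{\goodFunction^c}^2\mid\pasthalf\,]\right)^{1/2}\left(\Exp[\,|F_k^s-f(X_k+S_k)|^2\mid\pasthalf\,]\right)^{1/2}.
\]
For the first factor, since $1_{\goodFunction^c}$ is an indicator one has $1_{\goodFunction^c}^2=1_{\goodFunction^c}$, hence $\Exp[1_{\goodFunction^c}\mid\pasthalf]=\Prob(\goodFunction^c\mid\pasthalf)=1-\Prob(\goodFunction\mid\pasthalf)\le 1-\pf$ by $\pf$-probabilistic accuracy of $\{F_k^0,F_k^s\}$. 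For the second factor I would substitute the variance condition~\eqref{eq: variance}, which bounds $\Exp[|F_k^s-f(X_k+S_k)|^2\mid\pasthalf]$ by $\max\{\varConstF^2\Stepsize{k}{2}\norm{\nabla f(X_k)}^4,\theta^2\Control{k}{4}\}$; taking square roots and using that $\sqrt{\max\{a^2,b^2\}}=\max\{a,b\}$ for $a,b\ge 0$ (here $a=\varConstF\Stepsize{k}{}\norm{\nabla f(X_k)}^2\ge 0$ and $b=\theta\Control{k}{2}\ge 0$) gives $\max\{\varConstF\Stepsize{k}{}\norm{\nabla f(X_k)}^2,\theta\Control{k}{2}\}$. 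Multiplying the two bounds yields the first claimed inequality, and repeating with $|F_k^0-f(X_k)|$ yields the second.

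I do not expect a genuine obstacle here; the only point requiring care is measurability bookkeeping. As remarked just before the lemma, the conditioning $\sigma$-algebra $\pasthalf$ contains $G_k$ and $\Stepsize{k}{}$, and through the recursions defining Algorithm~\ref{alg:line_search_method} it also determines $\Control{k}{}$; hence these appear as constants inside the conditional expectation, so the square roots of $\varConstF^2\Stepsize{k}{2}\norm{\nabla f(X_k)}^4$ and $\theta^2\Control{k}{4}$ may be taken termwise. The other mild subtlety is simply to state explicitly at the outset that the square-integrability in the footnote is exactly what licenses the conditional Cauchy--Schwarz step and the finiteness of the resulting bound.
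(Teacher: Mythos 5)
Your proof is correct and follows essentially the same route as the paper's: the paper applies conditional H\"older's inequality to the quantity normalized by $\max\{\varConstF\Stepsize{k}{}\norm{\nabla f(X_k)}^2,\theta\Control{k}{2}\}$ and bounds the resulting second factor by $1$ via \eqref{eq: variance}, which is just your Cauchy--Schwarz step with the ($\pasthalf$-measurable) maximum factored out before rather than after. The measurability and integrability remarks you add are consistent with what the paper implicitly assumes.
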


\begin{proof} We show the result for $F_k^0-f(X_k)$, but the proof
  for $F_k^s-f(X_k + S_k)$ is the same. Using Holder's inequality for conditional expectations,
  we deduce
\begin{align*}
\Exp \left [ \tfrac{1_{\goodFunction^c} | F_k^0-f(X_k)|}{\max \{
  \varConstF \Stepsize{k}{} \norm{\nabla f(X_k)}^2, \theta
  \Control{k}{2} \} } \big | \pasthalf \right ]
  \le \left ( \Exp[1_{\goodFunction^c} | \pasthalf ] \right )^{1/2} \left ( \Exp \left [\tfrac{ | F_k^0-f(X_k)|^2}{\max \{
  \varConstF^2 \Stepsize{k}{2} \norm{\nabla f(X_k)}^4, \theta^2
  \Control{k}{4} \} } \big | \pasthalf \right ]
  \right )^{1/2}.
\end{align*}
The result follows after noting by \eqref{eq: variance}
\[\left ( \Exp \left [\tfrac{ | F_k^0-f(X_k)|^2}{\max \{
  \varConstF^2 \Stepsize{k}{2} \norm{\nabla f(X_k)}^4, \theta^2
  \Control{k}{4} \} } \big | \pasthalf \right ]
  \right )^{1/2} \le 1.\]
\end{proof}


\subsection{Computing $G_k$, $F_k^0$, and $F_k^s$ to satisfy Assumption \ref{assumpt:key}.}
Assuming that the variance of random function and gradient realizations
is bounded as 
$$
\Exp(\|\nabla \tilde f(x,\xi_i)-\nabla f(x)\|^2)\leq V_g\ \text{and\ } \Exp( |\tilde f(x,\xi_i)-f(x)|^2)\leq V_f,
$$
 Assumption
\ref{assumpt:key} can be made to  hold if $G_k$, $F_k^0$ and $F_k^s$ are computed using a sufficient number of
samples. 
In particular,  let $S_k$ be a sample of realizations $\nabla f(x,\xi_i)$, $i\in S_k$ and 
 $G_k:= \tfrac{1}{|S_k|} \sum_{i\in S_k}\nabla \tilde f(X_k,\xi_i)$.
By using results e.g. in \cite{Tripuraneni2017,Tropp2015} we can show that if 
\begin{equation}\label{eq:Sk}
|S_k|\geq \tilde O(\tfrac{V_g}{\constG^2\Stepsize{k}{2} \|G_k\|^2})
\end{equation}
 (where $\tilde O$ hides the log factor of ${1}/(1-\pg)$), then 
 Assumption \ref{assumpt:key}(i) is satisfied. 
 While $G_k$ is not known when $|S_k|$ is chosen, one can design a simple loop by guessing the value of 
$\|G_k\|$  and increasing the number of samples until \eqref{eq:Sk} is satisfied, this procedure is discussed in \cite{CartisScheinberg}. 
Similarly to satisfy Assumption~\ref{assumpt:key}(ii), it is sufficient to compute
$F^0_k= \tfrac{1}{|S_k^0|} \sum_{i\in S_k^0} \tilde f(X_k,\xi_i)$ with 
\[
|S^0_k|\geq \tilde O(\tfrac{V_f}{\varConstF^2 \Stepsize{k}{2}\|G_k\|^4})
\]
 (where $\tilde O$ hides the log factor of ${1}/(1-\pf)$)
and to obtain $F^s_k$ analogously. 
Finally, it is easy to see that Assumption  \ref{assumpt:key}(iii) is simply satisfied if 
$|S^0_k|\geq \tfrac{V_f}{\theta^2\Control{k}{4}}$ by standard properties of variance. 

We observe that: 
\vskip-0.1in
\begin{itemize}
\item unlike \cite{RByrd_etal_2012, Schmidt}, the number of samples for gradient and function estimation does not increase at any pre-defined rate, but is closely related to the progress of the algorithm. In particular if $\Stepsize{k}{} \|G_k\|$ and $\Control{k}{}$ increase then the sample sets sizes can decrease.
\item 
Also, unlike  \cite{Tripuraneni2017}  where the number of samples is simply chosen large enough a priori for all $k$ so that the right hand side in Assumption~\ref{assumpt:key}(i) is bounded by a predefined accuracy $O(\varepsilon)$, our algorithm can be applied without knowledge of $\varepsilon$. 
\item 
  Finally, unlike  \cite{Bollapragada2017} where   theoretical results require  that $|S_k|$ depends on 
$\|\nabla f(X_k)\|$, which is  unknown, our bounds on the sample set sizes all use knowable quantities, such as 
bound on the variance and quantities computed by the algorithm. 
\end{itemize}

We also point out $\constG$ can be arbitrarily big and $\pg$ depends only on the backtracking factor 
$\gamma$ and is not close to $1$; hence the number of samples to
satisfy  Assumption \ref{assumpt:key}(i) is moderate. On the other
hand, $\pf$ will have to depend on $\constG$; hence a looser control of 
the gradient estimates results in tighter control, \textit{i.e.} larger sample sets, for function estimates. 

Our last comment is that $G_k$ does not have to be an unbiased estimate of $\nabla f(X_k)$ and does not need to be computed via gradient samples. Instead it can be computed via stochastic finite differences, as is discussed for example in 
\cite{ChenMenickellyScheinberg2014}.

\section{Renewal-Reward Process} In this section, we
define a general random process  introduced in \cite{TR_prob_model} and its stopping time $T$
which serve as a general framework for analyzing behavior of stochastic trust region method in 
\cite{TR_prob_model} and stochastic line search in this paper.   We state the relevant definitions,
assumptions, and theorems and refer the reader to the proofs in
\cite{TR_prob_model}.

\begin{definition} \rm{Given a discrete time stochastic process $\{X_k\}$, a random
    variable $T$ is a \textit{stopping time} for $\{X_k\}$ if the event $\{T = k\} \in \sigma(X_0,\hdots,
    X_k)$. 
}
\end{definition}

 Let $\{\Phi_k, \mathcal{A}_k\}$ be a random process such
that $\Phi_{k}\in\lbrack0,\infty)$ and $\mathcal{A}_{k}\in\lbrack0,\infty)$ for
$k\geq0$.  
  Let us also define a
 biased random walk process, $\{W_k\}_{k=1}^\infty$, defined on the same probability 
 space as $\{\Phi_k, \mathcal{A}_k\}$. We denote $\mathcal{F}_k$
 the $\sigma$-algebra generated by $\{\Phi_0, \mathcal{A}_0, W_0, \hdots,
 \Phi_k, \mathcal{A}_k, W_k\}$, where $W_0=1$. In addition, $W_k$ obeys the following dynamics
\begin{equation}\label{w_process}
{\bf{Pr}}(W_{k+1} = 1 | \mathcal{F}_{k}) = p \quad \text{and} \quad
  {\bf{Pr}}(W_{k+1} = -1 | \mathcal{F}_{k}) = (1-p) \quad 
    \end{equation}

We define $T_\varepsilon$ to be a family of
 stopping times parameterized by $\varepsilon$. 
In \cite{TR_prob_model} a bound on  $\Exp(T_{\varepsilon})$ is derived under the following assumption on 
$\{\Phi_k, \mathcal{A}_k\}$. 

\begin{assumption}\label{assump: random_variable} \rm{
The following hold for the process $\{\Phi_k, \mathcal{A}_k, W_k\}$.
\begin{enumerate}[(i)]
\item $\mathcal{A}_{0}$ is a constant.  There exists a constant $\lambda\in\left(  0,\infty\right)  $ and
$\alpha_{\max}=\mathcal{A}_{0}e^{  \lambda j_{\max}}  $ (for some
$j_{\max}\in\mathbb{Z}$) 
such that $\mathcal{A}_{k}\leq\alpha_{\max}$ for all $k\geq 0$.

\item There exists a constant $\bar{\mathcal{A}} = \mathcal{A}_0e^{  \lambda \bar j}$
for some $\bar j\in Z$ and $\bar j\leq 0$,  such
  that, the following holds for all $k\geq 0$, 
\[ 
1_{ \{T_{\epsilon}>k\} }\mathcal{A}_{k+1} \ge1_{\{ T_{\epsilon}>k\}} \min \left \{ \mathcal{A}_k e^{\lambda W_{k+1}},
  \bar{\mathcal{A}} \right \} \]
  where $W_{k+1}$ satisfies \eqref{w_process} with $p>\frac{1}{2}$.
  
\item There exists a nondecreasing function $h :[0,\infty
)\rightarrow(0,\infty)$ and a constant $\Theta>0$  such that
\[ 
1_{ \{T_{\epsilon}>k\}}\text{\textbf{E}} \left [ \Phi_{k+1} | \mathcal{F}_{k}
  \right ] \le 1_{ \{T_{\epsilon}>k\}}(\Phi_{k} - \Theta
  h(\mathcal{A}_{k})). \]
\end{enumerate}}
\end{assumption}

Assumption~\ref{assump: random_variable} (iii) states that conditioned on the
event $T_{\varepsilon} > k$ and the past, the random variable $\Phi_k$
decreases by $\Theta h(\mathcal{A}_{k})$ at each iteration. Whereas
Assumption~\ref{assump: random_variable} (ii) says that once
$\mathcal{A}_k$ falls below the fixed constant $\bar{\mathcal{A}}$,
the sequence has a tendency to increase. Assumptions~\ref{assump: random_variable} (i) and (ii) together
also  ensures that $ \bar{\mathcal{A}}$ belongs to the sequence of values taken by the sequence $\mathcal{A}_{k}$. 
As we will see this is a simple technical assumption that can be satisfied w.l.o.g.

\begin{remark} \rm{Computational complexity (in deterministic methods)
    measures the number of
    iterations until an event such as $\norm{\nabla f(x)}$ is small or
    $f(x_k)-f^*$ is small,
    or equivalently, the rate at which the gradient/function values decreases as a
    function of the iteration counter $k$. For randomized or
    stochastic methods, previous works tended to focus on the second definition,
    i.e. showing the expected size of the gradient or function values decreases like
    $1/k$. Instead, here we bound the expected number of
    iterations until the size of the gradient or function values are small, which is
    the same as bounding 
   the stopping times $T_{\varepsilon} = \inf \{ k\ge 0 \, : \,
    \norm{\nabla f(X_k)} < \varepsilon\}$ and $T_{\varepsilon} = \inf
    \{ k \ge 0 \, : \, f(X_k)-f^* \le \varepsilon \}$, for a fixed $\varepsilon > 0$. 
}
\end{remark}

\begin{remark} \rm{In the context of deterministic line search, when the
  stepsize $\alpha_k$ falls below the constant $1/L$, where $L$ is the
  Lipschitz constant of $\nabla f(x)$, the iterate $x_k + s_k$ always
  satisfies the sufficient decrease condition, namely $f(x_k + s_k)
  \le f(x_k) - \theta \alpha_k \norm{\nabla f(x_k)}^2$. Thus $\alpha_k$ never falls much below $1/L$. 
  To match the
  dynamics behind deterministic line search, we expect $\Phi_{k+1} - \Phi_k \approx f(X_{k+1}) - f(X_k)$ with
  $\Theta h(\mathcal{A}_k) \approx  \mathcal{A}_k \norm{\nabla
    f(X_k)}^2$ and the constant $\bar{\mathcal{A}} \approx
  1/L$. However,  in the stochastic setting  there is a positive
probability of $\mathcal{A}_k$ being arbitrarily small. 
Theorem \ref{thm:renewal_reward_stop_time}, below, 
is derived by observing that on average $\mathcal{A}_k \ge \bar{\mathcal{A}}$ occurs
frequently due to the upward drift in the random walk
process. Consequently, ${\bf{E}}[\Phi_{k+1}-\Phi_k]$ can be bounded by
a negative fixed value (dependent on $\varepsilon$)
frequently; thus we can derive a bound on
${\bf{E}}[T_{\varepsilon}]$. 
}
\end{remark}

The following theorem (Theorem  2.2 in \cite{TR_prob_model})  bounds   ${\bf{E}}[T_{\varepsilon}]$ in terms of
$h(\bar{\mathcal{A}})$ and $\Phi_0$. 
\begin{thm}\label{thm:renewal_reward_stop_time} Under Assumption~\ref{assump: random_variable}, 
\[ {\bf{E}}[T_{\varepsilon}] \le \frac{p}{2p-1} \cdot
    \frac{\Phi_0}{\Theta h(\bar{\mathcal{A}})} 
    + 1 .\]
\end{thm}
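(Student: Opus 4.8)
The plan is to isolate the two phenomena that Assumption~\ref{assump: random_variable} is built to control and recombine them as a renewal--reward bound: the only guaranteed decreases of $\Phi_k$ (the ``rewards'') occur on iterations with $\mathcal{A}_k\ge\bar{\mathcal{A}}$, and such productive iterations recur frequently because the walk $W_k$ carries upward drift $2p-1>0$ by \eqref{w_process}. \emph{Bounding the number of productive iterations.} Set $N\defin\sum_{k\ge0}1_{\{T_\varepsilon>k\}}1_{\{\mathcal{A}_k\ge\bar{\mathcal{A}}\}}$. Because $h$ is positive and nondecreasing, $h(\mathcal{A}_k)\ge h(\bar{\mathcal{A}})\,1_{\{\mathcal{A}_k\ge\bar{\mathcal{A}}\}}$, so Assumption~\ref{assump: random_variable}(iii) yields $1_{\{T_\varepsilon>k\}}\Exp[\Phi_{k+1}\mid\mathcal{F}_k]\le1_{\{T_\varepsilon>k\}}\bigl(\Phi_k-\Theta h(\bar{\mathcal{A}})\,1_{\{\mathcal{A}_k\ge\bar{\mathcal{A}}\}}\bigr)$. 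Summing over $k=0,\dots,K$, taking expectations, telescoping (the leftover terms $1_{\{T_\varepsilon=k\}}\Phi_k$ and $1_{\{T_\varepsilon>K\}}\Phi_{K+1}$ are nonnegative and may be discarded) and letting $K\to\infty$ give $\Exp[N]\le\Phi_0/\bigl(\Theta h(\bar{\mathcal{A}})\bigr)$.

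\emph{A Lyapunov function for the stepsize.} By Assumptions~\ref{assump: random_variable}(i)--(ii) the sequence $\mathcal{A}_k$ takes values on the lattice $\{\mathcal{A}_0e^{\lambda j}:j\in\mathbb{Z}\}$ with $\bar{\mathcal{A}}=\mathcal{A}_0e^{\lambda\bar j}$ and $\bar j\le0$, so writing $\mathcal{A}_k=\mathcal{A}_0e^{\lambda J_k}$ we have $J_k\in\mathbb{Z}$ and $1_{\{T_\varepsilon>k\}}J_{k+1}\ge1_{\{T_\varepsilon>k\}}\min\{J_k+W_{k+1},\bar j\}$. Define $Y_k\defin\max\{\bar j-J_k,0\}\ge0$; then $Y_k=0$ precisely when $\mathcal{A}_k\ge\bar{\mathcal{A}}$, and $Y_0=0$. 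The recursion gives $1_{\{T_\varepsilon>k\}}Y_{k+1}\le1_{\{T_\varepsilon>k\}}\max\{Y_k-W_{k+1},0\}$, and since $Y_k$ is a nonnegative integer, $\max\{Y_k-W_{k+1},0\}=Y_k-W_{k+1}+1_{\{Y_k=0\}}1_{\{W_{k+1}=1\}}$. Taking conditional expectations and using \eqref{w_process}, $1_{\{T_\varepsilon>k\}}\Exp[Y_{k+1}\mid\mathcal{F}_k]\le1_{\{T_\varepsilon>k\}}\bigl(Y_k-(2p-1)+p\,1_{\{\mathcal{A}_k\ge\bar{\mathcal{A}}\}}\bigr)$: $Y_k$ drifts down by $2p-1$ except on productive iterations, where it may increase by $p$ in expectation.

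\emph{Combining via optional stopping.} From the last display, $M_k\defin Y_{T_\varepsilon\wedge k}+(2p-1)(T_\varepsilon\wedge k)-p\sum_{j=0}^{T_\varepsilon\wedge k-1}1_{\{\mathcal{A}_j\ge\bar{\mathcal{A}}\}}$ is a supermartingale with $M_0=Y_0=0$ (integrability is immediate since $Y_{T_\varepsilon\wedge k}\le T_\varepsilon\wedge k\le k$). Hence $\Exp[M_k]\le0$, and dropping the nonnegative term $\Exp[Y_{T_\varepsilon\wedge k}]$ gives $(2p-1)\Exp[T_\varepsilon\wedge k]\le p\,\Exp\bigl[\sum_{j=0}^{T_\varepsilon\wedge k-1}1_{\{\mathcal{A}_j\ge\bar{\mathcal{A}}\}}\bigr]\le p\,\Exp[N]$; since $\Exp[N]<\infty$, monotone convergence as $k\to\infty$ yields $T_\varepsilon<\infty$ a.s.\ and $(2p-1)\Exp[T_\varepsilon]\le p\,\Exp[N]$. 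Plugging in the bound on $\Exp[N]$ from the first paragraph gives $\Exp[T_\varepsilon]\le\frac{p}{2p-1}\cdot\frac{\Phi_0}{\Theta h(\bar{\mathcal{A}})}$, with the remaining ``$+1$'' in the statement a routine bookkeeping constant (from indexing conventions / not presuming $\mathcal{A}_k$ lies exactly on the lattice). I expect the main obstacle to be not any single estimate --- the two drift computations are elementary once the right quantities $N$ and $Y_k=\max\{\bar j-J_k,0\}$ are chosen --- but the measure-theoretic care in this last step: legitimizing the telescoping and the optional-stopping limit before $T_\varepsilon$ is known to be a.s.\ finite, handling the discarded nonnegative boundary terms, and verifying via Assumptions~\ref{assump: random_variable}(i)--(ii) (the w.l.o.g.\ remark that $\bar{\mathcal{A}}$ lies in the range of $\{\mathcal{A}_k\}$) that the $\min\{\cdot,\bar j\}$ comparison underlying $Y_k$ is exact.
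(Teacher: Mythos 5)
Your argument is correct, and it is worth noting that the paper itself does not prove this theorem: it is quoted as Theorem 2.2 of \cite{TR_prob_model} and the reader is referred there for the proof. The proof in that reference is a genuine renewal--reward argument: it defines the renewal times at which $\mathcal{A}_k\ge\bar{\mathcal{A}}$, bounds the expected inter-renewal time by $p/(2p-1)$ using the upward drift of the walk, bounds the expected number of renewals before $T_\varepsilon$ by $\Phi_0/(\Theta h(\bar{\mathcal{A}}))$ exactly as you do for your $N$, and then assembles the two via Wald's identity for the stopped renewal process. Your route replaces the renewal structure and Wald's identity with a single explicit Lyapunov function $Y_k=\max\{\bar j-J_k,0\}$ and one optional-stopping/supermartingale computation for $M_k$; the drift identity $\max\{Y_k-W_{k+1},0\}=Y_k-W_{k+1}+1_{\{Y_k=0\}}1_{\{W_{k+1}=1\}}$ (valid because $Y_k$ is a nonnegative integer, which is where the lattice normalization of Assumption~\ref{assump: random_variable}(i)--(ii) enters, as you correctly flag) is exactly what makes the ``$+p$ on productive iterations'' term appear and lets $\Exp[N]$ absorb it. Both proofs yield the same constant $\tfrac{p}{2p-1}\cdot\tfrac{\Phi_0}{\Theta h(\bar{\mathcal{A}})}$ --- your version in fact does not even need the additive $+1$, which in the original accounts for counting the final renewal interval --- and your almost-sure finiteness of $T_\varepsilon$ via monotone convergence is handled cleanly. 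The payoff of your approach is that it is self-contained and avoids verifying the i.i.d.-domination hypotheses needed to invoke Wald; the payoff of the renewal--reward formulation is that it isolates the ``expected time between visits to $\{\mathcal{A}_k\ge\bar{\mathcal{A}}\}$'' as a reusable quantity, which \cite{TR_prob_model} exploits for several variants of the framework.
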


\section{Convergence of Stochastic Line Search}\label{sec:convergence} Our primary goal is to prove
convergence of Algorithm~\ref{alg:line_search_method} by showing
a \textit{lim-inf convergence} result, $\liminf_{k \to \infty} \norm{\nabla f(X_k)}
= 0$ a.s. We that typical convergence results for
stochastic algorithms prove either high probability results or that the
expected gradient at an averaged point converges. Our result is
slightly stronger than these results since we show a subsequence of
the $\norm{\nabla f(X_k)}$ converges a.s. With this convergence result, stopping times based on either
$\norm{\nabla f(x)} < \varepsilon$ and/or $f(x)-f_{\min} <
\varepsilon$ are finite almost surely. Our approach for the liminf
proof is twofold: 
(1) construct a function $\Phi$ ($\approx f$) whose expected progress decreases
  proportionally to $\norm{\nabla f(x)}^2$ and (2) the $\limsup$ of
  the step sizes is strictly larger than $0$ a.s. 

\subsection{Useful results} \label{sec:useful_results} Before delving
into the convergence statement and proof, we state some lemmas similar
to those derived in \cite{CartisScheinberg,TR_prob_models_good_fun,ChenMenickellyScheinberg2014}.
 
\begin{lem}[Accurate gradients $\Rightarrow$ lower bound on
  $\norm{g_k}$] \label{lem: lower_bound_g}
Suppose $g_k$ is $\constG$-sufficiently accurate. Then 
\[ \frac{\norm{\nabla f(x_k)}}{(\constG
    \stepsize{\max}{} + 1)} \le \norm{g_k}. \]
\end{lem}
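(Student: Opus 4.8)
The plan is a two-step chain: (1) a triangle inequality to relate $\norm{\nabla f(x_k)}$ to $\norm{g_k}$ and the estimation error $\norm{g_k - \nabla f(x_k)}$, and (2) the $\constG$-sufficient accuracy bound together with the uniform cap $\stepsize{k}{} \le \stepsize{\max}{}$ to control that error. Concretely, I would first write $\norm{\nabla f(x_k)} \le \norm{g_k} + \norm{g_k - \nabla f(x_k)}$. Then, invoking the hypothesis that $g_k$ is $\constG$-sufficiently accurate (the realization-level version of Definition~\ref{def:random_good_dir}, i.e. $\omega \in \goodModel$), I substitute $\norm{g_k - \nabla f(x_k)} \le \constG \stepsize{k}{} \norm{g_k}$ to obtain $\norm{\nabla f(x_k)} \le \left(1 + \constG \stepsize{k}{}\right)\norm{g_k}$.

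Next I would use that Algorithm~\ref{alg:line_search_method} maintains $\stepsize{k}{} \le \stepsize{\max}{}$ for every $k$: the initialization sets $\stepsize{0}{} = \gamma^{j_0}\stepsize{\max}{}$ with $j_0 \le 0$ and $\gamma > 1$, hence $\stepsize{0}{} \le \stepsize{\max}{}$, and the successful-step update explicitly takes the minimum with $\stepsize{\max}{}$ while unsuccessful steps only decrease it. Therefore $\norm{\nabla f(x_k)} \le \left(1 + \constG \stepsize{\max}{}\right)\norm{g_k}$, and dividing both sides by the positive constant $\constG \stepsize{\max}{} + 1$ yields the claimed inequality.

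There is essentially no genuine obstacle here; the statement is a direct consequence of the triangle inequality and the accuracy definition. The only point requiring a line of justification is the uniform bound $\stepsize{k}{} \le \stepsize{\max}{}$, which follows immediately from the step-size update rules. (The assumption $\constG \ge 2$ from Remark~\ref{rmk:constants} is not needed for this particular estimate.)
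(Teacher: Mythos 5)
Your proposal is correct and follows exactly the paper's argument: the triangle inequality combined with the $\constG$-sufficient accuracy bound gives $\norm{\nabla f(x_k)} \le (\constG \stepsize{k}{} + 1)\norm{g_k}$, and the uniform bound $\stepsize{k}{} \le \stepsize{\max}{}$ finishes the proof. The extra justification you give for $\stepsize{k}{} \le \stepsize{\max}{}$ is a reasonable, if unnecessary, addition.
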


\begin{proof} Because $g_k$ is $\constG$-sufficiently accurate together with the triangle
  inequality implies
\begin{align*}
\norm{\nabla f(x_k)} \le (\constG
  \stepsize{k}{} +1)\norm{g_k} \le (\constG
  \stepsize{\max}{} + 1) \norm{g_k}.
\end{align*}
\end{proof}



\begin{lem}[Accurate gradients and estimates $\Rightarrow$
  successful iteration] \label{lem: good_est_good_model_successful}
  Suppose $g_k$ is $\constG$-sufficiently accurate and $\{f_k^0, f_k^s\}$ are $\constF$-accurate estimates. If 
\[\stepsize{k}{} \le  \frac{1-\theta}{\constG + \frac{L}{2}
      + 2\constF}\]
then the trial step $x_k + s_k$ is successful. In particular, this means $f_k^s \le f_k^0 - \theta \stepsize{k}{} \norm{g_k}^2.$
\end{lem}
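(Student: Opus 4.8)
The argument is the standard smoothness-plus-accuracy calculation adapted to the stochastic estimates. First I would invoke $L$-Lipschitz continuity of $\nabla f$ from Assumption~\ref{assumpt: objective_function} to get the descent-lemma bound
\[ f(x_k + s_k) \le f(x_k) + \ip{\nabla f(x_k), s_k} + \tfrac{L}{2}\norm{s_k}^2, \]
and substitute $s_k = -\stepsize{k}{} g_k$ to obtain
\[ f(x_k + s_k) \le f(x_k) - \stepsize{k}{}\ip{\nabla f(x_k), g_k} + \tfrac{L}{2}\stepsize{k}{2}\norm{g_k}^2. \]

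Next I would use the $\constG$-sufficient accuracy of $g_k$, i.e. $\norm{g_k - \nabla f(x_k)} \le \constG \stepsize{k}{}\norm{g_k}$, to lower bound the inner product: writing $\ip{\nabla f(x_k), g_k} = \norm{g_k}^2 - \ip{g_k - \nabla f(x_k), g_k}$ and applying Cauchy--Schwarz gives $\ip{\nabla f(x_k), g_k} \ge \norm{g_k}^2 - \constG \stepsize{k}{}\norm{g_k}^2$. Plugging this in yields
\[ f(x_k + s_k) \le f(x_k) - \stepsize{k}{}\norm{g_k}^2 + \bigl(\constG + \tfrac{L}{2}\bigr)\stepsize{k}{2}\norm{g_k}^2. \]

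Then I would pass from true function values to the estimates $f_k^0, f_k^s$ using $\constF$-accuracy, $|f_k^0 - f(x_k)| \le \constF \stepsize{k}{2}\norm{g_k}^2$ and $|f_k^s - f(x_k+s_k)| \le \constF\stepsize{k}{2}\norm{g_k}^2$. Chaining $f_k^s \le f(x_k+s_k) + \constF\stepsize{k}{2}\norm{g_k}^2$ and $f(x_k) \le f_k^0 + \constF\stepsize{k}{2}\norm{g_k}^2$ with the previous display produces
\[ f_k^s \le f_k^0 - \stepsize{k}{}\norm{g_k}^2 + \bigl(\constG + \tfrac{L}{2} + 2\constF\bigr)\stepsize{k}{2}\norm{g_k}^2. \]
Finally, comparing with the target inequality \eqref{eq: sufficient_decrease}, it suffices that $\bigl(\constG + \tfrac{L}{2} + 2\constF\bigr)\stepsize{k}{} \le 1-\theta$, which is exactly the hypothesis $\stepsize{k}{} \le \frac{1-\theta}{\constG + L/2 + 2\constF}$; hence the step is successful.

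There is no real obstacle here — it is routine bookkeeping. The only point requiring a little care is that the accuracy definitions are stated in terms of $\norm{g_k}$ (a known, algorithmically available quantity) rather than $\norm{\nabla f(x_k)}$, so one must resist the temptation to bound things by $\norm{\nabla f(x_k)}$; keeping everything in terms of $\stepsize{k}{2}\norm{g_k}^2$ is what makes all three error terms combine cleanly into the single coefficient $\constG + L/2 + 2\constF$.
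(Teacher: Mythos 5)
Your proof is correct and follows essentially the same route as the paper's: the descent lemma with $s_k=-\stepsize{k}{}g_k$, the decomposition of the inner product isolating $(\nabla f(x_k)-g_k)^Tg_k$ bounded via Cauchy--Schwarz and $\constG$-accuracy, and then the $2\constF\stepsize{k}{2}\norm{g_k}^2$ correction to pass to the estimates, yielding the same final coefficient $\constG+\tfrac{L}{2}+2\constF$. No discrepancies worth noting.
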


\begin{proof} The $L$-smoothness of $f$ and the $\constG$-sufficiently
  accurate gradient immediately yield 
\begin{align*}
f(x_k + s_k) &\le f(x_k) - \stepsize{k}{} (\nabla f(x_k)-g_k)^Tg_k - \stepsize{k}{}
  \norm{g_k}^2 + \tfrac{L \stepsize{k}{2}}{2} \norm{g_k}^2\\
&\le f(x_k) + \constG \stepsize{k}{2} \norm{g_k}^2 - \stepsize{k}{} \norm{g_k}^2
+ \tfrac{L \stepsize{k}{2}}{2} \norm{g_k}^2.
\end{align*}
Since the estimates are $\constF$-accurate, we obtain
\begin{align*}
f_k^s - \constF \stepsize{k}{2} \norm{g_k}^2 &\le f(x_k + s_k) - f_k^s + f_k^s\\
  &\le f(x_k) - f_k^0 + f_k^0 +\constG\stepsize{k}{2} \norm{g_k}^2 - \stepsize{k}{}
  \norm{g_k}^2 + \tfrac{L \stepsize{k}{2}}{2} \norm{g_k}^2 \\
&\le f_k^0 + \constF \stepsize{k}{2} \norm{g_k}^2 + \constG \stepsize{k}{2} \norm{g_k}^2 - \stepsize{k}{}
  \norm{g_k}^2 + \tfrac{L \stepsize{k}{2}}{2} \norm{g_k}^2.
\end{align*}
The result follows by noting $f_k^s \le f_k^0 - \stepsize{k}{} \norm{g_k}^2 \left ( 1- \stepsize{k}{} \left (
  \constG + \frac{L}{2} + 2\constF
  \right ) \right )$.
\end{proof}

\begin{lem}[Good estimates $\Rightarrow$ decrease in function] \label{lem: estimate_good_funct_decrease}  Suppose
  $\constF < \frac{\theta}{4 \stepsize{\max}{}}$ and $\{f_k^s,f_k^0\}$ are $\constF$-accurate estimates. If the trial step is
  successful, then the improvement in function value is
\begin{equation} \label{eq: good_est_success}
f(x_{k+1}) \le f(x_k) - \frac{\theta \stepsize{k}{}}{2} \norm{g_k}^2 .
\end{equation}
If, in addition, the step is reliable, then the improvement
in function value is 
\begin{equation} \label{eq: good_est_true}
f(x_{k+1}) \le f(x_k) - \frac{\theta \stepsize{k}{}}{4} \norm{g_k}^2 -
\frac{\theta}{4} \control{k}{2}.
\end{equation}
\end{lem}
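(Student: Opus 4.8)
The plan is to chain the successful-step condition together with the $\constF$-accuracy of the function estimates, and then absorb the resulting error term using the smallness hypothesis on $\constF$. First I would record that a successful step means $x_{k+1} = x_k + s_k = x_k - \stepsize{k}{} g_k$ and that the stochastic sufficient-decrease inequality $f_k^s \le f_k^0 - \theta \stepsize{k}{} \norm{g_k}^2$ holds. The $\constF$-accuracy of $\{f_k^0, f_k^s\}$ gives $|f_k^0 - f(x_k)| \le \constF \stepsize{k}{2}\norm{g_k}^2$ and $|f_k^s - f(x_k+s_k)| \le \constF \stepsize{k}{2}\norm{g_k}^2$. Using the first bound to replace $f_k^s$ by $f(x_{k+1})$ from below and the second to replace $f_k^0$ by $f(x_k)$ from above yields
\[ f(x_{k+1}) \le f_k^s + \constF \stepsize{k}{2}\norm{g_k}^2 \le f_k^0 - \theta \stepsize{k}{}\norm{g_k}^2 + \constF \stepsize{k}{2}\norm{g_k}^2 \le f(x_k) - \theta \stepsize{k}{}\norm{g_k}^2 + 2\constF \stepsize{k}{2}\norm{g_k}^2. \]

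Next I would absorb the error term. Since $\stepsize{k}{} \le \stepsize{\max}{}$ and $\constF < \theta/(4\stepsize{\max}{})$, we have $2\constF \stepsize{k}{2}\norm{g_k}^2 = (2\constF \stepsize{k}{})\,\stepsize{k}{}\norm{g_k}^2 < \tfrac{\theta}{2}\stepsize{k}{}\norm{g_k}^2$, which immediately gives $f(x_{k+1}) \le f(x_k) - \tfrac{\theta \stepsize{k}{}}{2}\norm{g_k}^2$, the first claim. For the second claim, I would keep half of this decrease and trade the other half against $\control{k}{2}$: write $\tfrac{\theta \stepsize{k}{}}{2}\norm{g_k}^2 = \tfrac{\theta \stepsize{k}{}}{4}\norm{g_k}^2 + \tfrac{\theta \stepsize{k}{}}{4}\norm{g_k}^2$, and invoke the definition of a reliable step, $\stepsize{k}{}\norm{g_k}^2 \ge \control{k}{2}$, to bound the second copy from below by $\tfrac{\theta}{4}\control{k}{2}$, obtaining $f(x_{k+1}) \le f(x_k) - \tfrac{\theta \stepsize{k}{}}{4}\norm{g_k}^2 - \tfrac{\theta}{4}\control{k}{2}$.

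The computation is entirely elementary, so I do not anticipate a genuine obstacle; the only thing needing care is the factor of $2$ in the error term, which arises from having to approximate $f$ at both $x_k$ and $x_k + s_k$. This is precisely why the hypothesis is $\constF < \theta/(4\stepsize{\max}{})$ rather than $\theta/(2\stepsize{\max}{})$: one factor of $2$ is consumed by the two estimates and one factor of $2$ is held in reserve for the split used in the reliable case.
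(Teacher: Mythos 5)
Your proposal is correct and follows essentially the same argument as the paper: telescope the successful-step inequality through the two $\constF$-accuracy bounds to pick up the $2\constF\stepsize{k}{2}\norm{g_k}^2$ error, absorb it via $\constF < \theta/(4\stepsize{\max}{})$ and $\stepsize{k}{}\le\stepsize{\max}{}$, then split the resulting $\tfrac{\theta\stepsize{k}{}}{2}\norm{g_k}^2$ decrease using the reliable-step condition $\stepsize{k}{}\norm{g_k}^2\ge\control{k}{2}$. No gaps.
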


\begin{proof} The iterate $x_k + s_k$ is successful and the estimates
  are $\constF$ accurate so we conclude
\begin{align*}
f(x_k + s_k) &\le f(x_k + s_k) - f_k^s +f_k^0 - f(x_k) + f(x_k) - \stepsize{k}{}
  \theta \norm{g_k}^2\\
&\le f(x_k) + 2 \constF \stepsize{k}{2}
                    \norm{g_k}^2 - \stepsize{k}{} \theta \norm{g_k}^2\\
&\le f(x_k) - \stepsize{k}{} \norm{g_k}^2 \left ( \theta-2 \constF \stepsize{\max}{}
  \right ),
\end{align*}
where the last inequality follows because $\stepsize{k}{} \le
\stepsize{\max}{}$. The condition $\constF < \frac{\theta}{4
  \stepsize{\max}{}}$ immediately implies \eqref{eq:
  good_est_success}. By noticing $\tfrac{\theta \stepsize{k}{}}{2}
\norm{g_k}^2 \ge \tfrac{\theta \stepsize{k}{}}{4} \norm{g_k}^2 +
\tfrac{\theta \control{k}{2}}{4}$ holds for reliable steps, we deduce
\eqref{eq: good_est_true}. 
\end{proof}

\begin{lem}\label{lem: bound_gradient} Suppose the iterate is successful. Then 
\[ \norm{\nabla f(x_{k+1})}^2 \le 2 (L^2 \stepsize{k}{2} \norm{g_k}^2 +
  \norm{\nabla f(x_k)}^2 ). \]
In particular, the inequality holds
\[\tfrac{1}{L^2} \left ( \stepsize{k+1}{} \norm{\nabla f(x_{k+1})}^2 - \stepsize{k}{} \norm{\nabla
    f(x_k)}^2\right ) \le 2 \gamma \stepsize{k}{}   (\stepsize{\max}{2} \norm{g_k}^2 +
  \tfrac{1}{L^2}\norm{\nabla f(x_k)}^2 ).\]
\end{lem}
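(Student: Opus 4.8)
The plan is to bound how much the true gradient can change across one successful step, and then to feed this into the telescoping quantity appearing in the second display. On a successful iteration Algorithm~\ref{alg:line_search_method} sets $x_{k+1} = x_k + s_k = x_k - \stepsize{k}{} g_k$, so $\norm{x_{k+1}-x_k} = \stepsize{k}{}\norm{g_k}$. Since $\nabla f$ is $L$-Lipschitz on $\Omega$ by Assumption~\ref{assumpt: objective_function}, I would write $\norm{\nabla f(x_{k+1}) - \nabla f(x_k)} \le L\stepsize{k}{}\norm{g_k}$ and then apply the elementary inequality $\norm{a+b}^2 \le 2\norm{a}^2 + 2\norm{b}^2$ with $a = \nabla f(x_{k+1}) - \nabla f(x_k)$ and $b = \nabla f(x_k)$. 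This gives $\norm{\nabla f(x_{k+1})}^2 \le 2L^2\stepsize{k}{2}\norm{g_k}^2 + 2\norm{\nabla f(x_k)}^2$, which is the first claimed bound.

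For the second display I would use the update rule for the step size on a successful iteration, namely $\stepsize{k+1}{} = \min\{\stepsize{\max}{}, \gamma\stepsize{k}{}\} \le \gamma\stepsize{k}{}$. Multiplying the first bound by $\stepsize{k+1}{}$ and using this inequality yields $\stepsize{k+1}{}\norm{\nabla f(x_{k+1})}^2 \le 2\gamma\stepsize{k}{}\bigl(L^2\stepsize{k}{2}\norm{g_k}^2 + \norm{\nabla f(x_k)}^2\bigr)$. Dividing by $L^2$ and invoking $\stepsize{k}{} \le \stepsize{\max}{}$ to replace $\stepsize{k}{2}$ by $\stepsize{\max}{2}$ in the first term gives $\tfrac{1}{L^2}\stepsize{k+1}{}\norm{\nabla f(x_{k+1})}^2 \le 2\gamma\stepsize{k}{}\bigl(\stepsize{\max}{2}\norm{g_k}^2 + \tfrac{1}{L^2}\norm{\nabla f(x_k)}^2\bigr)$. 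Finally, since $\tfrac{1}{L^2}\stepsize{k}{}\norm{\nabla f(x_k)}^2 \ge 0$, subtracting it from the left side only decreases it, so the desired inequality follows.

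There is essentially no obstacle here: the argument is a direct consequence of Lipschitz continuity, the parallelogram-type inequality, and the bookkeeping facts $\stepsize{k}{}\le\stepsize{\max}{}$ and $\stepsize{k+1}{}\le\gamma\stepsize{k}{}$ that hold on successful iterations. The only point requiring minor care is making sure the loose constants ($2$, $\gamma$, $\stepsize{\max}{}$) are introduced in the right place so that the final form matches exactly what the subsequent convergence analysis will consume — in particular keeping the term $\stepsize{k}{}\norm{g_k}^2$ (rather than $\stepsize{k}{2}\norm{g_k}^2$) on the right, which is why $\stepsize{k}{}\le\stepsize{\max}{}$ is used to trade one factor of $\stepsize{k}{}$ for $\stepsize{\max}{}$.
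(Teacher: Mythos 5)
Your proof is correct and follows essentially the same route as the paper's: Lipschitz continuity of $\nabla f$ to bound the change in gradient over the step, the inequality $(a+b)^2\le 2(a^2+b^2)$, and the successful-step update rule for $\stepsize{k}{}$ together with $\stepsize{k}{}\le\stepsize{\max}{}$. Your version is in fact slightly more careful than the paper's one-line justification, since you use $\stepsize{k+1}{}=\min\{\stepsize{\max}{},\gamma\stepsize{k}{}\}\le\gamma\stepsize{k}{}$ rather than asserting equality, and you note explicitly that dropping the nonnegative term $\tfrac{1}{L^2}\stepsize{k}{}\norm{\nabla f(x_k)}^2$ from the left-hand side is harmless.
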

\begin{proof} An immediate consequence of $L$-smoothness of $f$ is $\norm{\nabla f(x_{k+1})} \le L \alpha_k \norm{g_k} +
  \norm{\nabla f(x_k)}$. The result follows from squaring both sides and applying the
  bound, $(a+b)^2 \le 2(a^2 + b^2)$. To obtain the second inequality, we note that in the case $x_k +
  s_k$ is successful, $\stepsize{k+1}{} = \gamma \stepsize{k}{}$. 
\end{proof}

\begin{lem}[Accurate gradients and estimates $\Rightarrow$ decrease in
  function] \label{lem:decrease_function} Suppose $g_k$ is
  $\constG$-sufficiently accurate and $\{f_k^0, f_k^s\}$ are $\constF$-accurate estimates where
  $\constF \le \tfrac{\theta}{4 \stepsize{\max}{}}$. If the trial step
    is successful, then 
\begin{equation} \label{eq:decrease_function_succ} f(x_{k+1})-f(x_k) \le -\frac{\theta \stepsize{k}{}}{4}
  \norm{g_k}^2 - \frac{\theta \stepsize{k}{}}{4(\constG
    \stepsize{\max}{} +1)^2} \norm{\nabla f(x_k)}^2. 
\end{equation}
In addition, if the trial step is reliable, then
\begin{equation}\label{eq:decrease_function_succ_true} f(x_{k+1})-f(x_k) \le  -\frac{\theta \stepsize{k}{}}{8}
  \norm{g_k}^2- \frac{\theta }{8} \control{k}{2}  - \frac{\theta \stepsize{k}{} }{4(\constG
    \stepsize{\max}{} +1)^2} \norm{\nabla f(x_k)}^2.\end{equation}
\end{lem}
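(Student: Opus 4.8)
The plan is to combine Lemma~\ref{lem: estimate_good_funct_decrease} with Lemma~\ref{lem: bound_gradient}, since the first controls the decrease in $f$ in terms of $\norm{g_k}^2$ (and $\control{k}{2}$ on reliable steps) while the second bounds $\norm{\nabla f(x_k)}^2$ by a multiple of $\norm{g_k}^2$ whenever the gradient estimate is sufficiently accurate. First I would invoke Lemma~\ref{lem: lower_bound_g}: since $g_k$ is $\constG$-sufficiently accurate, $\norm{\nabla f(x_k)}^2 \le (\constG\stepsize{\max}{}+1)^2\norm{g_k}^2$, hence
\[
-\frac{\theta\stepsize{k}{}}{4(\constG\stepsize{\max}{}+1)^2}\norm{\nabla f(x_k)}^2 \ge -\frac{\theta\stepsize{k}{}}{4}\norm{g_k}^2,
\]
wait---that inequality goes the wrong way, so the point is rather that any bound of the form $-c\,\stepsize{k}{}\norm{g_k}^2$ is no weaker than $-c\,\stepsize{k}{}\norm{\nabla f(x_k)}^2/(\constG\stepsize{\max}{}+1)^2$. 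So I would split the guaranteed decrease into two halves and spend one half on the $\norm{g_k}^2$ term and convert the other half to a $\norm{\nabla f(x_k)}^2$ term via Lemma~\ref{lem: lower_bound_g}.

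Concretely, for the successful case: by Lemma~\ref{lem: estimate_good_funct_decrease} (applicable since $\constF<\theta/(4\stepsize{\max}{})$ and the estimates are $\constF$-accurate), $f(x_{k+1})-f(x_k)\le -\tfrac{\theta\stepsize{k}{}}{2}\norm{g_k}^2 = -\tfrac{\theta\stepsize{k}{}}{4}\norm{g_k}^2 - \tfrac{\theta\stepsize{k}{}}{4}\norm{g_k}^2$, and then bound the second piece below using $\norm{g_k}^2\ge \norm{\nabla f(x_k)}^2/(\constG\stepsize{\max}{}+1)^2$ from Lemma~\ref{lem: lower_bound_g}, which yields exactly \eqref{eq:decrease_function_succ}. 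For the reliable case I would start instead from \eqref{eq: good_est_true}, namely $f(x_{k+1})-f(x_k)\le -\tfrac{\theta\stepsize{k}{}}{4}\norm{g_k}^2 - \tfrac{\theta}{4}\control{k}{2}$; splitting $\tfrac{\theta\stepsize{k}{}}{4}\norm{g_k}^2 = \tfrac{\theta\stepsize{k}{}}{8}\norm{g_k}^2 + \tfrac{\theta\stepsize{k}{}}{8}\norm{g_k}^2$ and $\tfrac{\theta}{4}\control{k}{2} = \tfrac{\theta}{8}\control{k}{2} + \tfrac{\theta}{8}\control{k}{2}$ gives more than enough to produce the three terms in \eqref{eq:decrease_function_succ_true}; I would apply Lemma~\ref{lem: lower_bound_g} to one of the $\norm{g_k}^2$ pieces (noting $\tfrac{1}{8}<\tfrac14$, so the coefficient $\tfrac{\theta\stepsize{k}{}}{4(\constG\stepsize{\max}{}+1)^2}$ in the statement is actually a weakening and holds comfortably), and simply discard the leftover nonnegative $\tfrac{\theta}{8}\control{k}{2}$ term if it is not needed, or keep it—either way the stated bound follows.

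There is essentially no hard step here: this lemma is a bookkeeping corollary that repackages the earlier decrease lemmas into a form where the right-hand side simultaneously exhibits decrease in $\norm{g_k}^2$, in $\norm{\nabla f(x_k)}^2$, and (in the reliable case) in $\control{k}{2}$, which is precisely what is needed to later verify Assumption~\ref{assump: random_variable}(iii) with $\Phi_k\approx f(X_k)$. The only thing to be careful about is keeping track of which of the two decrease guarantees (factor $\tfrac12$ versus $\tfrac14$ in front of $\stepsize{k}{}\norm{g_k}^2$) is being invoked and ensuring the fractions $\tfrac14,\tfrac18$ in the final inequalities are consistent with what the halving argument actually delivers; since the claimed constants are all smaller than what the arithmetic gives, there is slack and no obstacle. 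I would close by remarking that all hypotheses ($g_k$ sufficiently accurate, $\{f_k^0,f_k^s\}$ $\constF$-accurate with $\constF\le\theta/(4\stepsize{\max}{})$) are exactly those under which Lemmas~\ref{lem: lower_bound_g} and~\ref{lem: estimate_good_funct_decrease} apply, so nothing further is needed.
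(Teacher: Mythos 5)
Your argument for the first inequality is correct and is exactly the paper's: split the $-\tfrac{\theta\stepsize{k}{}}{2}\norm{g_k}^2$ decrease from \eqref{eq: good_est_success} into two quarters and convert one quarter via Lemma~\ref{lem: lower_bound_g}. The reliable case, however, has a genuine gap. You start from \eqref{eq: good_est_true}, which only supplies $-\tfrac{\theta\stepsize{k}{}}{4}\norm{g_k}^2 - \tfrac{\theta}{4}\control{k}{2}$; after reserving $\tfrac{\theta\stepsize{k}{}}{8}\norm{g_k}^2$ for the first term of \eqref{eq:decrease_function_succ_true}, only $\tfrac{\theta\stepsize{k}{}}{8}\norm{g_k}^2$ remains to feed Lemma~\ref{lem: lower_bound_g}, which yields $-\tfrac{\theta\stepsize{k}{}}{8(\constG\stepsize{\max}{}+1)^2}\norm{\nabla f(x_k)}^2$, not the claimed $-\tfrac{\theta\stepsize{k}{}}{4(\constG\stepsize{\max}{}+1)^2}\norm{\nabla f(x_k)}^2$. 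Your remark that ``$\tfrac18<\tfrac14$, so the coefficient in the statement is a weakening'' has the direction reversed: these are \emph{negative} terms on the right-hand side of an upper bound, so a larger coefficient makes the claim \emph{stronger}, and the leftover $\tfrac{\theta}{8}\control{k}{2}$ cannot be traded for the missing gradient decrease (reliability gives $\control{k}{2}\le\stepsize{k}{}\norm{g_k}^2$, which points the wrong way for that substitution).

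The fix is the paper's route: use \eqref{eq: good_est_success} for the reliable case as well, writing
\begin{equation*}
-\tfrac{\theta}{2}\stepsize{k}{}\norm{g_k}^2
= -\tfrac{\theta}{8}\stepsize{k}{}\norm{g_k}^2
  -\tfrac{\theta}{8}\stepsize{k}{}\norm{g_k}^2
  -\tfrac{\theta}{4}\stepsize{k}{}\norm{g_k}^2
\le -\tfrac{\theta}{8}\stepsize{k}{}\norm{g_k}^2
  -\tfrac{\theta}{8}\control{k}{2}
  -\tfrac{\theta\stepsize{k}{}}{4(\constG\stepsize{\max}{}+1)^2}\norm{\nabla f(x_k)}^2,
\end{equation*}
where the middle piece uses $\stepsize{k}{}\norm{g_k}^2\ge\control{k}{2}$ (reliability) and the last piece uses Lemma~\ref{lem: lower_bound_g}. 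This budgets a full quarter of the decrease for the gradient term and recovers the stated constants.
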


\begin{proof} Lemma~\ref{lem: lower_bound_g} implies 
\begin{equation}\label{eq:blah}
-\tfrac{\theta}{2} \stepsize{k}{} \norm{g_k}^2 \le -\tfrac{\theta}{4}
\stepsize{k}{} \norm{g_k}^2 - \tfrac{\theta}{4(\constG
  \stepsize{\max}{} + 1)^2} \stepsize{k}{} \norm{\nabla f(x_k)}^2.
\end{equation}
We combine this result with Lemma~\ref{lem:
  estimate_good_funct_decrease} to conclude the first result. For the
second result, since the step is reliable,
equation~\eqref{eq:blah} improves to 
\begin{align*}
-\tfrac{\theta}{2} \stepsize{k}{} \norm{g_k}^2 \le -\tfrac{\theta}{8}
\stepsize{k}{} \norm{g_k}^2 - \tfrac{\theta}{8} \control{k}{2} - \tfrac{\theta}{4(\constG
  \stepsize{\max}{} + 1)^2} \stepsize{k}{} \norm{\nabla f(x_k)}^2,
\end{align*}
and again the result follows from Lemma~\ref{lem:
  estimate_good_funct_decrease}.
\end{proof}

\subsection{Definition and analysis of $\{\Phi_k, \Stepsize{k}{}, W_k\}$ process for Algorithm~\ref{alg:line_search_method}}
We base our proof of convergence on properties of the random function
\begin{equation} \label{eq:phi} \Phi_k = \nu (f(X_k)-f_{\min}) + (1-\nu) \frac{1}{L^2}
  \Stepsize{k}{} \norm{\nabla f(X_k)}^2 + (1-\nu) \theta
  \Control{k}{2}.
\end{equation}
for some (deterministic) $\nu \in (0,1)$ and $f_{\min} \le f(x)$ for
all $x$. The goal is to show that $\{\Phi_k, \Stepsize{k}{}\}$ satisfies Assumption \ref{assump: random_variable}, in particular, that $\Phi_k$ is expected to decrease on each iteration.  Due to inaccuracy in function estimates and gradients, the
algorithm may take a step that increases the objective and thus $\Phi_k$. We will show that such increase if  bounded by a value proportional to $\norm{\nabla f(x)}^2$. On the other hand, as we will show, on successful iteration with accurate function estimates, the objective decreases proportionally  $\norm{\nabla f(x)}^2$, while on unsuccessful steps, equation
\eqref{eq:phi} is always negative because both $\Stepsize{k}{}$ and  $\Control{k}{}$ are decreased. The
function $\Phi$ is chosen to balance the potential increases and decreases in the
objective with changes inflicted by unsuccessful steps. 


\begin{thm} \label{thm: expected_decrease} Let
  Assumptions~\ref{assumpt: objective_function} and
  \ref{assumpt:key} hold. Suppose $\{X_k, G_k, F_k^0, F_k^s,
  \Stepsize{k}{}, \Control{k}{}\}$ is the random process
  generated by Algorithm~\ref{alg:line_search_method}. Then there exist
  probabilities $\pg, \pf > 1/2$ and a constant $\nu \in (0,1)$ such
  that the expected decrease in $\Phi_k$ is
\begin{equation} \label{eq: expect_non_convex_decrease} \Exp [\Phi_{k+1}-\Phi_k | \pastone] \le
  - \frac{\pg \pf (1-\nu) (1-\gamma^{-1})}{4} \left ( \frac{\Stepsize{k}{}}{L^2}
    \norm{\nabla f(X_k)}^2 + \theta \Control{k}{2} \right ). 
\end{equation}
In particular, the constant $\nu$ and probabilities $\pg, \pf > 1/2$ satisfy 
\begin{gather}
\frac{\nu}{1-\nu} \ge \max \left \{ \frac{32 \gamma
    \stepsize{\max}{2}}{\theta}, 16(\gamma-1), \frac{16 \gamma
    (\constG \stepsize{\max}{} + 1)^2}{\theta} \right \}, \label{eq:bound_nu}\\
\pg \ge \frac{2 \gamma}{1/2(1-\gamma^{-1}) +2 \gamma} \label{eq:bound_pg}\\
\text{and} \qquad \frac{\pg\pf}{\sqrt{1-\pf}} \ge \max \left \{
  \frac{ 8L^2\nu\varConstF + 16 \gamma(1-\nu)}{
    (1-\nu)(1-\gamma^{-1})}, \frac{8\nu}{(1-\nu)(1-\gamma^{-1})} \right \} \label{eq:bound_product}. 
\end{gather}
\end{thm}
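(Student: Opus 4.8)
The plan is to condition on the filtration $\pastone$ and split the analysis of $\Exp[\Phi_{k+1}-\Phi_k \mid \pastone]$ according to the three disjoint events that partition the sample space up to measure-zero considerations: $\goodModel \cap \goodFunction$ (accurate gradient \emph{and} accurate estimates), $\goodModel^c \cap \goodFunction$ (inaccurate gradient but accurate estimates), and $\goodFunction^c$ (inaccurate estimates). For each of these I would further split into the successful/unsuccessful cases, and on successful steps into reliable/unreliable. The quantity $\Phi_{k+1}-\Phi_k$ has three pieces coming from \eqref{eq:phi}: the objective change $\nu(f(X_{k+1})-f(X_k))$, the change $\tfrac{1-\nu}{L^2}(\Stepsize{k+1}{}\norm{\nabla f(X_{k+1})}^2 - \Stepsize{k}{}\norm{\nabla f(X_k)}^2)$, and the control change $(1-\nu)\theta(\Control{k+1}{2}-\Control{k}{2})$. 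On unsuccessful steps, $X_{k+1}=X_k$, $\Stepsize{k+1}{}=\gamma^{-1}\Stepsize{k}{}$, and $\Control{k+1}{2}=\gamma^{-1}\Control{k}{2}$, so all three pieces are $\le 0$ with the last two giving strictly negative contributions proportional to $(1-\gamma^{-1})$; this case is essentially free. On successful steps I would invoke Lemma~\ref{lem:decrease_function} (for the $\goodModel\cap\goodFunction$ case), Lemma~\ref{lem: estimate_good_funct_decrease} (for reliable vs.\ unreliable refinements), and Lemma~\ref{lem: bound_gradient} to control the middle (gradient-norm-times-stepsize) term, absorbing the potentially positive $2\gamma\Stepsize{k}{}(\stepsize{\max}{2}\norm{g_k}^2 + L^{-2}\norm{\nabla f(X_k)}^2)$ into the negative objective decrease using the lower bound $\tfrac{\nu}{1-\nu}$ from \eqref{eq:bound_nu}.

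The genuinely delicate case is $\goodModel^c\cap\goodFunction$ and, even more so, $\goodFunction^c$. When the gradient is bad, the step may increase $f$; but we still have accurate function estimates, and since the step was accepted it satisfied \eqref{eq: sufficient_decrease}, so $f_k^s\le f_k^0 - \stepsize{k}{}\theta\norm{g_k}^2$, and the $\constF$-accuracy then gives $f(X_{k+1})-f(X_k)\le 2\constF\Stepsize{k}{2}\norm{G_k}^2 - \stepsize{k}{}\theta\norm{G_k}^2\le -(\theta/2)\Stepsize{k}{}\norm{G_k}^2$ using $\constF\le \theta/(4\stepsize{\max}{})$ — so actually the objective still decreases here, and only the middle gradient term can go up; this is controlled again through $\nu$ and Lemma~\ref{lem: bound_gradient}. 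The hard part is $\goodFunction^c$: here the step may be falsely accepted and $f(X_{k+1})$ may be genuinely larger than $f(X_k)$. I would bound $\Exp[1_{\goodFunction^c}(f(X_{k+1})-f(X_k))\mid\pastone]$ by writing $f(X_{k+1})-f(X_k) = (f(X_{k+1})-F_k^s) + (F_k^s - F_k^0) + (F_k^0 - f(X_k))$, noting $F_k^s-F_k^0\le -\stepsize{k}{}\theta\norm{G_k}^2\le 0$ on accepted steps, and applying Lemma~\ref{lem:uniform_integrable} to each of the two estimate-error terms to get a bound of order $(1-\pf)^{1/2}\max\{\varConstF\Stepsize{k}{}\norm{\nabla f(X_k)}^2,\theta\Control{k}{2}\}$. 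This is exactly where \eqref{eq:bound_product} enters: the $(1-\pf)^{1/2}$ factor, divided into $\pg\pf$, must be large enough that the guaranteed expected decrease on the good event $\goodModel\cap\goodFunction$ (which occurs with probability $\ge\pg\pf$) dominates this error term plus the analogous $1-\pg$ error from $\goodModel^c\cap\goodFunction$.

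After assembling the three cases, I would collect the coefficients of $\tfrac{\Stepsize{k}{}}{L^2}\norm{\nabla f(X_k)}^2$ and of $\theta\Control{k}{2}$ separately and impose that each net coefficient is at most $-\tfrac{\pg\pf(1-\nu)(1-\gamma^{-1})}{4}$. Each such requirement unwinds into a lower bound on $\tfrac{\nu}{1-\nu}$ (to dominate the Lemma~\ref{lem: bound_gradient} positive terms and the $(\constG\stepsize{\max}{}+1)^2$ factor from Lemma~\ref{lem:decrease_function}), a lower bound on $\pg$ (so that the random-walk-style accounting of $\Stepsize{k}{}$ changes across successful/unsuccessful steps has the right sign — this is the purely combinatorial part giving \eqref{eq:bound_pg}), and a lower bound on $\tfrac{\pg\pf}{\sqrt{1-\pf}}$ (from the $\goodFunction^c$ error control). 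I would then verify that these three constraints are simultaneously satisfiable with $\pg,\pf\in(1/2,1]$: fix $\nu$ by \eqref{eq:bound_nu}, then \eqref{eq:bound_pg} pins down how close $\pg$ must be to $1$, and finally since $\tfrac{\pg\pf}{\sqrt{1-\pf}}\to\infty$ as $\pf\to 1$, \eqref{eq:bound_product} can always be met by taking $\pf$ close enough to $1$. The main obstacle, and the step requiring the most care, is the bookkeeping in the $\goodFunction^c$ case — correctly using that accepted steps obey the \emph{stochastic} Armijo inequality \eqref{eq: sufficient_decrease} to cancel the $F_k^s-F_k^0$ difference, and then pairing Lemma~\ref{lem:uniform_integrable} with Jensen/Hölder so that the residual is genuinely $O((1-\pf)^{1/2})$ times the right max, rather than something uncontrolled; everything else is careful but routine case-wise algebra.
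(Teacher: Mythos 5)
Your proposal follows essentially the same route as the paper's own proof: the identical three-way partition into $\goodModel\cap\goodFunction$, $\goodModel^c\cap\goodFunction$, and $\goodFunction^c$, refined by successful/unsuccessful and reliable/unreliable steps, with Lemmas~\ref{lem:decrease_function}, \ref{lem: estimate_good_funct_decrease}, and \ref{lem: bound_gradient} handling the good cases and the stochastic Armijo condition plus Lemma~\ref{lem:uniform_integrable} delivering the $(1-\pf)^{1/2}$ control in the bad-estimates case, exactly as the paper does. The coefficient bookkeeping you describe for \eqref{eq:bound_nu}, \eqref{eq:bound_pg}, and \eqref{eq:bound_product} also matches the paper's final assembly, so the plan is correct and not a genuinely different argument.
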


\begin{table}[hbtp!]
\centering
   \begin{tabular}{|c||c || c || c |}
	\hline
& \multicolumn{3}{c|}{Upper bound on $\Exp [\Phi_{k+1}-\Phi_k]$}\\
\hline
& \begin{minipage}{0.2\textwidth} \begin{center}\vspace{0.1cm} Accurate gradients
    \\ Accurate functions\\ w/ prob. $\pg\pf$ \vspace{0.1cm} \end{center} \end{minipage}
& \begin{minipage}{0.2\textwidth} \begin{center} Bad gradients \\ Accurate
    functions \\ w/ prob. $(1-\pg)\pf$ \end{center} \end{minipage}
& \begin{minipage}{0.15\textwidth} \begin{center} Bad
    functions \\ w/ prob. $1-\pf$ \end{center} \end{minipage} \\
\hline
\begin{minipage}{0.1\textwidth} \begin{center}\vspace{0.1cm} Success \vspace{0.1cm}
\end{center} \end{minipage}
&  \begin{minipage}{0.25\textwidth} \begin{center}\vspace{0.1cm} $-\frac{\Stepsize{k}{}}{L^2} \norm{\nabla f(X_k)}^2 -
                        \Control{k}{2}$\\ {\textcolor{blue}{decrease}}
                        \vspace{0.1cm} \end{center} \end{minipage}& \begin{minipage}{0.25\textwidth} \begin{center}\vspace{0.1cm}
                        $\frac{\Stepsize{k}{}}{L^2} \norm{\nabla
                          f(X_k)}^2$\\ {\textcolor{red}{increase}}
                        \vspace{0.1cm} \end{center} \end{minipage} & \begin{minipage}{0.25\textwidth} \begin{center}\vspace{0.1cm} $\frac{\Stepsize{k}{}}{L^2} \norm{\nabla f(X_k)}^2 
                        + \Control{k}{2}$\\ {\textcolor{red}{increase}}
                        \vspace{0.1cm} \end{center} \end{minipage} \\
\hline
\begin{minipage}{0.1\textwidth} \begin{center} Unsuccess
\end{center} \end{minipage} & \begin{minipage}{0.25\textwidth} \begin{center}\vspace{0.1cm} $-\frac{\Stepsize{k}{}}{L^2} \norm{\nabla f(X_k)}^2 -
                        \Control{k}{2}$\\ {\textcolor{blue}{decrease}}
                        \vspace{0.1cm} \end{center} \end{minipage}& \begin{minipage}{0.25\textwidth} \begin{center}\vspace{0.1cm} $-\frac{\Stepsize{k}{}}{L^2} \norm{\nabla f(X_k)}^2 -
                        \Control{k}{2}$\\ {\textcolor{blue}{decrease}}
                        \vspace{0.1cm} \end{center} \end{minipage} &\begin{minipage}{0.25\textwidth} \begin{center}\vspace{0.1cm} $-\frac{\Stepsize{k}{}}{L^2} \norm{\nabla f(X_k)}^2 -
                        \Control{k}{2}$\\ {\textcolor{blue}{decrease}}
                        \vspace{0.1cm} \end{center} \end{minipage}\\
\hline
\begin{minipage}{0.12\textwidth} \begin{center}
    \vspace{0.1cm}\textbf{Overall worst case
      improv.} \vspace{0.1cm}
\end{center} \end{minipage} & {\textcolor{blue}{decrease}} &
                                                             {\textcolor{red}{increase}}
&{\textcolor{red}{increase}}\\
\hline
\end{tabular}
\caption{We summarize the proof of Theorem~\ref{thm:
    expected_decrease} by displaying the values of
  $\Phi_{k+1}-\Phi_k$. The proof considers cases: accurate grad./functions estimates, bad grad./accurate
  functions estimates, and
  bad function estimates. Each of these is further broken into whether
  the step was successful/unsuccessful. We
  summarize the expected upper bounds on $\Phi_{k+1}-\Phi_k$ up to constants.
  }
  \label{tbl:summary_convergence}
\end{table}

\begin{proof}[Proof of Theorem~\ref{thm: expected_decrease}] Our proof
  considers three separate cases: good gradients/good estimates, bad
  gradients/good estimates, and lastly bad estimates. Each of these cases
  will be broken down into whether a successful/unsuccessful step is
  reliable/unreliable. To simplify notation, we introduce three
  sets
\begin{gather*}
\text{Succ} := \{ X_k + S_k \text{ is successful, namely sufficient
  decrease occurs}\},\\
\text{R} := \{ X_k + S_k \text{ is reliable, \textit{i.e.} $\Stepsize{k}{}\norm{G_k}^2 \ge
    \Control{k}{2}$}\},\\
\text{and} \qquad \text{U} := \{ X_k + S_k \text{ is unreliable,
  \textit{i.e.} $\Stepsize{k}{}\norm{G_k}^2 <
    \Control{k}{2}$}\},
\end{gather*}
Using this notation we can write
\begin{align*}
\Exp [\Phi_{k+1}-\Phi_k | \pastone] = \Exp
  [( 1_{\goodModel \cap \goodFunction} + 1_{\goodModel^c \cap
  \goodFunction} + 1_{\goodFunction^c}) (\Phi_{k+1}-\Phi_k) |
  \pastone].
\end{align*}
For each case we will derive a bound on the expected decrease (increase) in $\Phi_k$. In particular, we will show that, under an appropriate choice of $\nu$, when the model and the estimates are good all three types of steps result in a decrease of $\Phi_k$  proportional to 
$\Stepsize{k}{} \norm{\nabla f(X_k)}^2$ and 
                        $\Control{k}{2}$, while when model is bad, but the estimates are good, $\Phi_k$ may increase by an amount proportional to  
                        $\Stepsize{k}{} \norm{\nabla f(X_k)}^2$. Finally, when both the model and estimates are both bad, the expected increase 
                        in $\Phi_k$ is bounded by an amount proportional to $\Stepsize{k}{} \norm{\nabla f(X_k)}^2$ and 
                        $\Control{k}{2}$. Thus, by choosing the right probability values for these events, we can ensure overall expected decrease. These  bounds are derived in the proof below and are  summarized  in Table~\ref{tbl:summary_convergence}.

\noindent \textbf{Case 1 (Accurate gradients and estimates,
$1_{\goodModel \cap \goodFunction} = 1$).} We will show that the
 $\Phi_k$ decreases no matter what type of step occurs and that the smallest decrease happens on the unsuccessful step. Thus this case dominates the other two and overall we 
conclude that
\begin{equation} \label{eq:case_1_exp} \Exp [1_{\goodModel \cap \goodFunction} (\Phi_{k+1}-\Phi_k) |
  \pastone ] \le -\pg \pf
  (1-\nu)(1-\gamma^{-1}) \left ( 
  \frac{\Stepsize{k}{}}{L^2} \norm{\nabla f(X_k)}^2 + \theta
  \Control{k}{2} \right ). \end{equation}
\begin{enumerate}[(i).]
\item \textit{Successful and reliable step ($\success \True = 1$).}
  The iterate is successful and both the gradient and function estimates
  are accurate so a decrease in the true objective occurs, specifically, \ref{eq:decrease_function_succ_true} from Lemma~\ref{lem:decrease_function}) applies:
\begin{equation} \label{eq:case_1_1_1}\begin{aligned}
1_{\goodModel \cap \goodFunction} &\success \True \nu
  (f(X_{k+1})-f(X_k))\\
&\le -\nu 1_{\goodModel \cap \goodFunction} \success \True \left (
  \frac{\theta \Stepsize{k}{}}{8} \norm{G_k}^2 + \frac{\theta}{8}
  \Control{k}{2} + \frac{\theta \Stepsize{k}{}}{4(\constG
  \stepsize{\max}{} +1)^2} \norm{\nabla f(X_k)}^2
  \right ).
\end{aligned} \end{equation}
As the iterate is successful, the term $\Stepsize{k}{} \norm{\nabla
  f(X_k)}^2$ may increase, but its change is bounded due to Lemma~\ref{lem: bound_gradient}:
\begin{equation} \label{eq:case_1_1_2} \begin{aligned}
1_{\goodModel \cap \goodFunction} &\success \True (1-\nu) \frac{1}{L^2}
\left ( \Stepsize{k+1}{} \norm{\nabla f(X_{k+1})}^2-\Stepsize{k}{}
  \norm{\nabla f(X_k)}^2 \right )\\
&\le 1_{\goodModel \cap \goodFunction} \success \True (1-\nu) 2\gamma
\Stepsize{k}{} \left ( \stepsize{\max}{2} \norm{G_k}^2 +
  \tfrac{1}{L^2} \norm{\nabla f(X_k)}^2 \right ).
\end{aligned}
\end{equation}
Lastly because we have a reliable step,
$\Control{k+1}{2}$ increases by $\gamma$. Consequently, we deduce that
\begin{equation} \label{eq:case_1_1_3} 
1_{\goodModel \cap \goodFunction} \success \True (1-\nu) \theta
(\Control{k+1}{2} - \Control{k}{2}) = 1_{\goodModel \cap
  \goodFunction} \success \True (1-\nu) \theta (\gamma-1) \Control{k}{2}.
\end{equation}
Without loss of generality, suppose $L^2 \ge 1$. We choose $\nu$ sufficiently large so that the term on the right hand side of \eqref{eq:case_1_1_1} dominates the right hand sides of   \eqref{eq:case_1_1_2}, and
\eqref{eq:case_1_1_3}, specifically, 
\begin{equation} \label{eq:case_1_1_nu} \begin{aligned}
-\frac{\nu \theta \Stepsize{k}{}}{8} \norm{G_k}^2 &+ (1-\nu)2 \gamma
\Stepsize{k}{} \stepsize{\max}{2} \norm{G_k}^2 \le -\frac{\nu \theta
  \Stepsize{k}{}}{16} \norm{G_k}^2,\\
-\frac{\nu \theta \Stepsize{k}{}}{4L^2(\constG \stepsize{\max}{}
  +1)^2} \norm{\nabla f(X_k)}^2 + &(1-\nu) \frac{2 \gamma \Stepsize{k}{}}{L^2} 
\norm{\nabla f(X_k)}^2 \le - \frac{\nu \theta
  \Stepsize{k}{}}{8L^2(\constG \stepsize{\max}{} +1)^2} \norm{\nabla f(X_k)}^2,\\
\text{and} \qquad -\frac{\nu \theta}{8} \Control{k}{2} &+ (1-\nu) (\gamma-1) \theta
\Control{k}{2} \le - \frac{\nu \theta}{16} \Control{k}{2}.  
\end{aligned}
\end{equation}
We combine Equations~\eqref{eq:case_1_1_1}, \eqref{eq:case_1_1_2}, and
\eqref{eq:case_1_1_3} to conclude
\begin{equation} \label{eq:case_1_1_all} \begin{aligned}
1_{\goodModel \cap \goodFunction} \success &\True (\Phi_{k+1}-\Phi_k)\\
&\le -1_{\goodModel \cap \goodFunction} \success \True\left (\frac{\nu \theta
  \Stepsize{k}{}}{8L^2(\constG \stepsize{\max}{} +1)^2} \norm{\nabla
  f(X_k)}^2 + \frac{\nu \theta}{16} \Control{k}{2} \right ).
\end{aligned}
\end{equation}
\item \textit{Successful and unreliable step ($\success \notTrue = 1$).}
  Because the iterate is successful and our gradient/estimates are accurate,
  we again apply 
 Lemma~\ref{lem:decrease_function} to bound $f(X_{k+1}) - f(X_k)$ but this time using 
  \eqref{eq:decrease_function_succ} which holds for unreliable steps. The possible increase from
  the $(1-\nu) \tfrac{1}{L^2} \norm{\nabla f(X_k)}^2$ term is the same
  as \eqref{eq:case_1_1_2} where we replace $\True$ with $\notTrue$
  since Lemma~\ref{lem: bound_gradient} still applies. Lastly with an
  unreliable step, the change in $\Control{k}{2}$ is 
\begin{equation} \label{eq:case_1_2_3} 
1_{\goodModel \cap \goodFunction} \success \notTrue
(1-\nu) \theta (\Control{k+1}{2} -\Control{k}{2}) \le -1_{\goodModel \cap
  \goodFunction} \success \notTrue (1-\nu)(1-\gamma^{-1}) \theta \Control{k}{2}.
\end{equation}
Therefore by choosing $\nu$ such that \eqref{eq:case_1_1_nu} holds, we have
that
\begin{equation}  \label{eq:case_1_2_all} \begin{aligned}
1_{\goodModel \cap \goodFunction} &\success \notTrue
(\Phi_{k+1}-\Phi_k)\\
&\le -1_{\goodModel \cap \goodFunction} \success
\notTrue \left ( \tfrac{\nu \theta \Stepsize{k}{}}{8L^2(\constG
  \stepsize{\max}{} +1)^2} \norm{\nabla f(X_k)}^2 +
(1-\nu)(1-\gamma^{-1}) \theta \Control{k}{2} \right ).
\end{aligned}
\end{equation}
\item \textit{Unsuccessful iterate ($\notsuccess =1$).} Because the
  iterate is unsuccessful, the change in the function values is $0$ and the constants $\Stepsize{k}{}$ and $\Control{k}{2}$ 
  decrease. Consequently, we deduce that
\begin{equation} \label{eq:case_1_3_all} 
1_{\goodModel \cap \goodFunction} \notsuccess (\Phi_{k+1}-\Phi_k) \le
-1_{\goodModel \cap \goodFunction} \notsuccess (1-\nu)(1-\gamma^{-1})
\left ( \tfrac{\Stepsize{k}{}}{L^2} \norm{\nabla f(X_k)}^2 + \theta \Control{k}{2} \right ).
\end{equation}
\end{enumerate}
We chose $\nu$ sufficiently large to ensure that the third case (iii), unsuccessful iterate
\eqref{eq:case_1_3_all},  provides the worst case decrease when compared to 
\eqref{eq:case_1_1_all} and \eqref{eq:case_1_2_all}.  Specifically $\nu$  is chosen so that
\begin{equation} \label{eq:nu} \begin{gathered}
\frac{-\nu \theta \Stepsize{k}{}}{8L^2 (\constG \stepsize{\max}{}
  +1)^2} \norm{\nabla f(X_k)}^2  \le -(1-\nu)(1-\gamma^{-1})
  \frac{\Stepsize{k}{}}{L^2} \norm{\nabla f(X_k)}^2\\
\text{and} \qquad \frac{-\nu \theta}{16} \Control{k}{2} \le
  -(1-\nu)(1-\gamma^{-1}) \theta \Control{k}{2}. 
\end{gathered}
\end{equation}

As such, we bounded the change in $\Phi_k$ in the case of accurate
gradients and estimates by
\begin{align} \label{eq:integrable_1}
1_{\goodModel \cap \goodFunction} (\Phi_{k+1}-\Phi_k) \le
  -1_{\goodModel \cap \goodFunction} (1-\nu)(1-\gamma^{-1}) \left (
  \frac{\Stepsize{k}{}}{L^2} \norm{\nabla f(X_k)}^2 + \theta \Control{k}{2}
  \right ).
\end{align}
We take conditional expectations with respect to $\pastone$ and using
Assumption~\ref{assumpt:key}, equation \eqref{eq:case_1_exp} holds. 

\textbf{Case 2 (Bad gradients and accurate estimates, $1_{\goodModel^c \cap
    \goodFunction} = 1$)}
Unlike the previous case, $\Phi_k$ ma increase,  since the step along an inaccurate probabilistic gradients
may not provide enough decrease 
 to cancel the
increase from the $\norm{\nabla f(X_k)}^2$. Precisely, the successful and
unreliable case dominates the worst case increase in $\Phi_k$:
\begin{equation} \label{eq:case_2_exp}
\Exp [ 1_{\goodModel^c \cap \goodFunction} (\Phi_{k+1} - \Phi_k) |
\pastone ] \le (1-\pg) (1-\nu) 
\frac{2\gamma \Stepsize{k}{}}{L^2} \norm{\nabla f(X_k)}^2.
\end{equation}
As before, we consider three separate cases. 
\begin{enumerate}[(i)]
\item \textit{Successful and reliable step $(\success
    \True =1)$}. A successful, reliable step with accurate
  function estimates but bad gradients has functional improvement
  (Lemma~\ref{lem: estimate_good_funct_decrease}, equation \eqref{eq: good_est_true}):
\[ 1_{\goodModel^c \cap \goodFunction} \success \True
  \nu(f(X_{k+1})-f(X_k)) \le  -1_{\goodModel^c \cap \goodFunction}
  \success \True \nu \left ( \frac{\Stepsize{k}{} \theta \norm{G_k}^2}{4}
    + \frac{\theta}{4} \Control{k}{2} \right ).\]
In contrast to \eqref{eq:case_1_1_1}, we
lose the $\norm{\nabla f(X_k)}^2$ term. A reliable, successful step
increases both constants $\Stepsize{k+1}{}$ and
$\Control{k+1}{2}$, leading to 
\eqref{eq:case_1_1_2} and \eqref{eq:case_1_1_3} with $1_{\goodModel \cap \goodFunction}$ replaced
 by $1_{\goodModel^c \cap \goodFunction}$.  Hence by choosing $\nu$ to
satisfy \eqref{eq:case_1_1_nu}, the dominant term in $\Phi_k$ is
\begin{equation} \label{eq:case_2_1_1}
\begin{aligned}
1_{\goodModel^c \cap \goodFunction} &\success \True (\Phi_{k+1}-\Phi_k)\\
&\le 1_{\goodModel^c \cap \goodFunction} \success \True \left (
  -\frac{\nu \theta \Stepsize{k}{}}{16} \norm{G_k}^2 - \frac{\nu
    \theta}{16} \Control{k}{2} + \frac{2\gamma(1-\nu)}{L^2}
  \Stepsize{k}{} \norm{\nabla f(X_k)}^2 \right ).
\end{aligned}
\end{equation}
\item \textit{Successful and unreliable step $(\success
    \notTrue = 1)$}.  Lemma~\ref{lem:
    estimate_good_funct_decrease} holds, but this time equation \eqref{eq:
    good_est_success} for unreliable steps applies. Moreover,  \eqref{eq:case_1_1_2}
  and \eqref{eq:case_1_2_3} that bound the change in the last two terms of $\Phi_k$ also apply. Again by choosing $\nu$ to satisfy
  \eqref{eq:case_1_1_nu}, we deduce 
\begin{equation} \label{eq:case_2_1_2}
\begin{aligned}
&1_{\goodModel^c \cap \goodFunction} \success \notTrue
(\Phi_{k+1}-\Phi_k)\\
&\le 1_{\goodModel^c \cap \goodFunction} \success \notTrue \left (
  -\tfrac{\nu \theta \Stepsize{k}{}}{16} \norm{G_k}^2 -
  (1-\nu)(1-\gamma^{-1}) \theta \Control{k}{2} + 
  \tfrac{2\gamma(1-\nu)}{L^2} \Stepsize{k}{} \norm{\nabla f(X_k)}^2
\right ).
\end{aligned}
\end{equation}
\item \textit{Unsuccessful $(\notsuccess = 1)$.} As in the previous
  case, equation \eqref{eq:case_1_3_all} holds. 
\end{enumerate}
The right hand sides of  \eqref{eq:case_2_1_1} and \eqref{eq:case_2_1_2}
 and
\eqref{eq:case_1_3_all} are trivially upper bounded by the positive term $\Stepsize{k}{}\norm{\nabla f(X_k)}^2$.  Hence, we conclude that
\begin{equation}\label{eq:integrable_2} 1_{\goodModel^c \cap \goodFunction} (\Phi_{k+1}-\Phi_k) \le 
  1_{\goodModel^c \cap \goodFunction} \frac{2\gamma(1-\nu)}{L^2}
  \Stepsize{k}{} \norm{\nabla f(X_k)}^2. 
\end{equation}
Inequality \eqref{eq:case_2_exp} follows by taking expectations with
respect to $\pastone$ and noting that $\Exp [1_{\goodModel^c \cap
  \goodFunction} | \mathcal{F}_{k-1}^{M\cdot F}] \le 1-\pg$ as in Assumption~\ref{assumpt:key}.

\textbf{Case 3 (Bad estimates, $1_{\goodFunction^c}=1$).} Inaccurate
estimates can cause the algorithm to accept a step which can lead
to an increase in $f$, $\Stepsize{}{}$, and $\Control{}{}$ and hence in  $\Phi_k$.  We control this
increase in $\Phi_k$ by bounding the variance in the function
estimates, as in \eqref{eq: variance}, which is the key reason for Assumption~\ref{assumpt:key}(iii). By
adjusting the probability of outcome (Case (3)) to be sufficiently small, we can ensure that in expectation
$\Phi_k$ is sufficiently reduced. Precisely, we will show 
\begin{equation} \label{eq:case_3_exp} \begin{aligned}
\Exp [1_{\goodFunction^c} (\Phi_{k+1}-\Phi_k) | &\pastone ] \le 2\nu(\sqrt{1-\pf}) \max \{ \varConstF \Stepsize{k}{}
\norm{\nabla f(X_k)}^2, \theta \Control{k}{2} \}\\
&+ (1-\pf) \frac{(1-\nu) 2\gamma}{L^2} \Stepsize{k}{} \norm{\nabla f(X_k)}^2.
\end{aligned}
\end{equation}
A successful step leads to the following bound
\begin{equation}\label{eq: case_3_success}
\begin{aligned}
1_{\goodFunction^c} \success \nu &\left ( f(X_{k+1})-f(X_k) \right ) \le
1_{\goodFunction^c} \success \nu \left ( (F_k^s - F_k^0) +
  |f(X_{k+1})-F_k^s| + |F_k^0-f(X_k)| \right )\\
&\le 1_{\goodFunction^c} \success \nu \left ( -\theta \Stepsize{k}{} \norm{G_k}^2 +
  |f(X_{k+1})-F_k^s| + |F_k^0-f(X_k)| \right ),
\end{aligned} 
\end{equation}
where the last inequality is due to the sufficient decrease condition. 
As before, we consider three separate cases. 
\begin{enumerate}[(i).]
\item \textit{Successful and reliable step $(\success \True =1)$.} With a
  reliable step we have $-\Stepsize{k}{} \norm{G_k}^2 \le
  -\Control{k}{2}$, thus \eqref{eq: case_3_success} implies 
\begin{align*}
1_{\goodFunction^c} &\success \True
  \nu(f(X_{k+1})-f(X_k))\\ 
&\le 1_{\goodFunction^c}
                                           \success \True \nu \left (
                                           -\tfrac{1}{2} \theta
                                           \Stepsize{k}{} \norm{G_k}^2
                                           - \tfrac{\theta}{2}
  \Control{k}{2} +|f(X_{k+1})-F_k^s|
  + |F_k^0-f(X_k)|
                                           \right ). 
\end{align*}
 We note that $\Phi_{k+1}-\Phi_k$ is upper bounded by the right hand side of the above inequality and the right hand sides of 
  \eqref{eq:case_1_1_2} and \eqref{eq:case_1_1_3}. 
As before, by choosing $\nu$ as in \eqref{eq:case_1_1_nu} we ensure $\tfrac{-\nu
  \theta}{2} \Stepsize{k}{} \norm{G_k}^2 + (1-\nu)2 \gamma \Stepsize{k}{}
\stepsize{\max}{2} \norm{G_k}^2 \le 0$ and $\tfrac{-\nu \theta}{2}
\Control{k}{2} + (1-\nu) (\gamma-1) \theta \Control{k}{2} \le 0$. 
It follows that
\begin{equation} \label{eq:case_3_1}
\begin{aligned}
1_{\goodFunction^c} \success \True &(\Phi_{k+1}-\Phi_k)\\ &\le
  1_{\goodFunction^c} \left (\nu |f(X_{k+1})-F_k^s| +
 \nu |F_k^0-f(X_k)| + (1-\nu)\tfrac{2\gamma}{L^2} \Stepsize{k}{}
  \norm{\nabla f(X_k)}^2 \right ) .
\end{aligned}
\end{equation}
\item \textit{Successful and unreliable step $(\success \notTrue =
    1)$.} Since on  unreliable steps, 
  $\Control{k+1}{2}$ is decreased, then the increase in $\Phi_k$ is always smaller than the worst-case increase we just derived for the successful and reliable 
  step. Thus \eqref{eq:case_3_1} holds with $\True$ replaced by $\notTrue$. 

\item \textit{Unsuccessful $(\notsuccess = 1)$} As we decrease both
  $\Control{}{}$ and $\Stepsize{}{}$, and $X_{k+1}= X_k$, we conclude
  that \eqref{eq:case_1_3_all} hold. 
\end{enumerate}

The equation \eqref{eq:case_3_1} dominates \eqref{eq:case_1_3_all};
thus in all three cases \eqref{eq:case_3_1} holds. We
take expectations of \eqref{eq:case_3_1} and apply Lemma~\ref{lem:uniform_integrable} to
conclude that
\begin{equation} \label{eq:case_3_1_exp}
\begin{aligned} \Exp[1_{\goodFunction^c} (\Phi_{k+1}-\Phi_k) |
  \pastone ] &\le 2 \nu (1-\pf)^{1/2} \max\{
  \varConstF \Stepsize{k}{} \norm{\nabla f(X_k)}^2, \theta
  \Control{k}{2} \}\\
& (1-\pf) (1-\nu) \tfrac{2\gamma}{L^2} \Stepsize{k}{} \norm{\nabla f(X_k)}^2.
\end{aligned}
\end{equation}
Now we combine the expectations \eqref{eq:case_1_exp}, \eqref{eq:case_2_exp}, and \eqref{eq:case_3_exp} to obtain
\begin{align*}
\Exp [\Phi_{k+1}-\Phi_k &| \pastone] = \Exp[ (1_{\goodModel \cap \goodFunction} + 1_{\goodModel^c \cap \goodFunction} + 1_{\goodFunction^c})(\Phi_{k+1}-\Phi_k)| \pastone]\\
&\le -\pg\pf (1-\nu) (1-\gamma^{-1}) \left ( \tfrac{\Stepsize{k}{}}{L^2} \norm{\nabla f(X_k)}^2 + \theta \Control{k}{2} \right ) + \pf (1-\pg) \tfrac{2\gamma (1-\nu) \Stepsize{k}{}}{L^2} \norm{\nabla f(X_k)}^2\\
&+ 2\nu (1-\pf)^{1/2} \left (\varConstF \Stepsize{k}{} \norm{\nabla f(X_k)}^2 + \theta \Control{k}{2} \right ) + (1-\pf)^{1/2} \tfrac{4 \gamma (1-\nu) \Stepsize{k}{}}{L^2} \norm{\nabla f(X_k)}^2
\end{align*}
where the inequality follows from $1-\pf \le (1-\pf)^{1/2}$ and $1-\pg = \pf(1-\pg) + (1-\pf)(1-\pg) \le \pf(1-\pg) + (1-\pf)^{1/2}$. Let us choose $\pg \in (0,1]$ so that \eqref{eq:bound_pg} holds which implies 
\begin{align*}
\left ( -\pg \pf \tfrac{ (1-\nu)(1-\gamma^{-1})\Stepsize{k}{}}{L^2}  + \pf(1-\pg) \tfrac{2 \gamma (1-\nu) \Stepsize{k}{}}{L^2} \right ) \norm{\nabla f(X_k)}^2 \le -\pg\pf \tfrac{ (1-\nu)(1-\gamma^{-1})\Stepsize{k}{}}{2 L^2} \norm{\nabla f(X_k)}^2. 
\end{align*}
We have now reduced the number of terms in the conditional expectation 
\begin{align*}
\Exp [ &\Phi_{k+1}-\Phi_k | \pastone] \le -\pg\pf \tfrac{1}{2} (1-\nu) (1-\gamma^{-1}) \left ( \tfrac{\Stepsize{k}{}}{ L^2} \norm{\nabla f(X_k)}^2 + \theta \Control{k}{2} \right )\\
&+ 2\nu (1-\pf)^{1/2} \left (\varConstF \Stepsize{k}{} \norm{\nabla f(X_k)}^2 + \theta \Control{k}{2} \right ) + (1-\pf)^{1/2} \tfrac{4 \gamma (1-\nu) \Stepsize{k}{}}{L^2} \norm{\nabla f(X_k)}^2.
\end{align*}
We choose $\pf \in (0,1]$ large enough, so that $\tfrac{\pg \pf}{\sqrt{1-\pf}}$ satisfies \eqref{eq:bound_product} which implies 
\begin{gather*}
\left ( - \tfrac{\pg\pf(1-\nu)(1-\gamma^{-1}) }{2 L^2} + (1-\pf)^{1/2} \left (2 \nu \varConstF + \tfrac{4 \gamma(1-\nu) }{L^2} \right ) \right ) \Stepsize{k}{} \norm{\nabla f(X_k)}^2 \le - \tfrac{\pg \pf(1-\nu)(1-\gamma^{-1}) \Stepsize{k}{}}{4 L^2} \norm{\nabla f(X_k)}^2\\
-\pg \pf \tfrac{1}{2}(1-\nu)(1-\gamma^{-1}) \theta \Control{k}{2} + 2\nu(1-\pf)^{1/2} \theta \Control{k}{2} \le -\pg \pf \tfrac{1}{4} (1-\nu)(1-\gamma^{-1}) \theta \Control{k}{2}.  
\end{gather*}
The proof is complete. 

\end{proof}

\begin{remark} \label{rmk:constants} \rm{To simplify the expression for the constants we will assume that $\theta = 1/2$ and $\gamma = 2$  which are typical values for these constants. We also assume that without loss of generality $\constG \ge 2$ and $\nu \ge 1/2$. The analysis can be performed for any other values of the above constants - the choices here are for simplicity and illustration. The conditions on $\pg$ and $\pf$ under the above choice of constants will be shown in our results. }\end{remark}

\begin{thm} \label{thm:expected_decrease_simplified} Let
  Assumptions~\ref{assumpt: objective_function} and
  \ref{assumpt:key} hold and chose constants as in Remark~\ref{rmk:constants}. Suppose $\{X_k, G_k, F_k^0, F_k^s,
  \Stepsize{k}{}, \Control{k}{}\}$ is the random process
  generated by Algorithm~\ref{alg:line_search_method}. Then there exists
  probabilities $\pg, \pf > 1/2$ and a constant $\nu \ge 1/2$ such
  that the expected decrease in $\Phi_k$ is
\begin{equation} \label{eq: expect_non_convex_decrease_const} \Exp [\Phi_{k+1}-\Phi_k | \pastone] \le
  - \frac{1}{8192(\constG \stepsize{\max}{} + 1)^2} \left ( \frac{\Stepsize{k}{}}{L^2}
    \norm{\nabla f(X_k)}^2 +  \frac{1}{2} \Control{k}{2} \right ). 
\end{equation}
In particular, the constant $\nu$ and probabilities $\pg, \pf > 1/2$ must satisfy 
\begin{gather}
\frac{\nu}{1-\nu} = 64(\constG \stepsize{\max}{} + 1)^2, \\
\pg \ge \frac{16}{17} \\
\text{and} \qquad \frac{\pg\pf}{\sqrt{1-\pf}} \ge \max \left \{ 1024 \varConstF L^2 (\constG \stepsize{\max}{}+1)^2 + 64, 1024 (\constG \stepsize{\max}{}+1)^2 \right \} . 
\end{gather}
\end{thm}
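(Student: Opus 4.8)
The plan is to \emph{specialize} the already-proven Theorem~\ref{thm: expected_decrease} to the concrete constant choices in Remark~\ref{rmk:constants}, namely $\theta = 1/2$, $\gamma = 2$, $\constG \ge 2$ and $\nu \ge 1/2$, and then trace through the three structural requirements \eqref{eq:bound_nu}, \eqref{eq:bound_pg}, \eqref{eq:bound_product} to read off the simplified numerical thresholds. No new optimization-theoretic content is needed: the work is entirely bookkeeping of constants, so I would present it as a short computation rather than a fresh argument.

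\textbf{Step 1: reduce the $\nu$ condition.} I would substitute $\theta = 1/2$ and $\gamma = 2$ into the right-hand side of \eqref{eq:bound_nu}, which becomes $\max\{64\stepsize{\max}{2},\, 16,\, 64(\constG\stepsize{\max}{}+1)^2\}$. Since $\constG \ge 2$ forces $(\constG\stepsize{\max}{}+1)^2 \ge \stepsize{\max}{2}$ and also $(\constG\stepsize{\max}{}+1)^2 \ge 1 \ge 16/64$, the third term dominates, so it suffices (and is convenient) to \emph{set} $\frac{\nu}{1-\nu} = 64(\constG\stepsize{\max}{}+1)^2$. One then notes this choice automatically gives $\nu \ge 1/2$ because $64(\constG\stepsize{\max}{}+1)^2 \ge 1$. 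This pins down $\nu$ and hence $1-\nu = \frac{1}{1+64(\constG\stepsize{\max}{}+1)^2}$.

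\textbf{Step 2: reduce the $\pg$ condition.} Plugging $\gamma = 2$ into \eqref{eq:bound_pg} gives $\pg \ge \frac{2\cdot 2}{\frac12\cdot\frac12 + 2\cdot 2} = \frac{4}{4.25} = \frac{16}{17}$, which is the stated threshold.

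\textbf{Step 3: reduce the $\pf$ (product) condition and assemble the decrease bound.} With $\theta = 1/2$, $\gamma = 2$, the right side of \eqref{eq:bound_product} is $\max\{\frac{8L^2\nu\varConstF + 32(1-\nu)}{(1-\nu)\cdot\frac12},\, \frac{8\nu}{(1-\nu)\cdot\frac12}\} = \max\{16L^2\varConstF\frac{\nu}{1-\nu} + 64,\, 16\frac{\nu}{1-\nu}\}$; substituting $\frac{\nu}{1-\nu} = 64(\constG\stepsize{\max}{}+1)^2$ yields exactly $\max\{1024\varConstF L^2(\constG\stepsize{\max}{}+1)^2 + 64,\; 1024(\constG\stepsize{\max}{}+1)^2\}$. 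Finally, to get \eqref{eq: expect_non_convex_decrease_const} I would start from the conclusion \eqref{eq: expect_non_convex_decrease} of Theorem~\ref{thm: expected_decrease}, namely coefficient $\frac{\pg\pf(1-\nu)(1-\gamma^{-1})}{4}$, bound $\pg\pf > 1/4$ (from $\pg,\pf > 1/2$), $1-\gamma^{-1} = 1/2$, and $1-\nu = \frac{1}{1+64(\constG\stepsize{\max}{}+1)^2} \ge \frac{1}{128(\constG\stepsize{\max}{}+1)^2}$ (using $64(\constG\stepsize{\max}{}+1)^2 \ge 1$); multiplying these gives a coefficient at least $\frac14\cdot\frac14\cdot\frac12\cdot\frac{1}{128(\constG\stepsize{\max}{}+1)^2} = \frac{1}{8192(\constG\stepsize{\max}{}+1)^2}$, and with $\theta = 1/2$ the $\Control{k}{2}$ term carries the factor $\frac12$ as written.

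\textbf{Main obstacle.} There is no conceptual obstacle; the only real care needed is in Step~3's arithmetic — keeping the $1-\nu$ lower bound compatible with the coefficient $8192$ and verifying that every ``$\max$'' collapses to the branch claimed under $\constG \ge 2$. A secondary subtlety is making sure the displayed equalities (rather than inequalities) for $\frac{\nu}{1-\nu}$ are legitimate: they are, because Theorem~\ref{thm: expected_decrease} only requires $\frac{\nu}{1-\nu}$ to be \emph{at least} the stated maximum, so we are free to fix it at the dominating term, and one should briefly remark that this particular choice still satisfies all three inequalities in \eqref{eq:bound_nu} simultaneously.
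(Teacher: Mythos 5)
Your proposal is correct and follows essentially the same route as the paper's proof: both simply specialize Theorem~\ref{thm: expected_decrease} by substituting $\theta=1/2$, $\gamma=2$, using $\constG\ge2$ to collapse the maxima in \eqref{eq:bound_nu}--\eqref{eq:bound_product}, and then chaining crude lower bounds on the coefficient $\pg\pf(1-\nu)(1-\gamma^{-1})/4$. Two harmless arithmetic slips are worth noting: the first entry of the $\nu$-maximum is $128\stepsize{\max}{2}$ rather than $64\stepsize{\max}{2}$ (still dominated by $64(\constG\stepsize{\max}{}+1)^2$ since $\constG\ge2$), and your final product $\tfrac14\cdot\tfrac14\cdot\tfrac12\cdot\tfrac{1}{128(\constG\stepsize{\max}{}+1)^2}$ equals $\tfrac{1}{4096(\constG\stepsize{\max}{}+1)^2}$, not $\tfrac{1}{8192(\constG\stepsize{\max}{}+1)^2}$, which only makes the claimed bound easier to meet.
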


\begin{proof} We plug in the values for $\gamma$ and $\theta$ and use
  the fact that $\constG \ge 2$ to obtain the expression for
  $\nu/(1-\nu)$ and $\pg$. In order to deduce the expression for
  $\pg\pf/(1-\pf)^{1/2}$, we assume that $\nu/(1-\nu) = 64(\constG
  \stepsize{\max}{} + 1)^2$. Lastly, we suppose $\nu > 1/2$ and $\pg
  \pf \ge 1/2$ and $\tfrac{\nu}{64(\constG \stepsize{\max}{} +1)2} =
  (1-\nu)$. Therefore, we have 
\begin{align*}
\frac{-\pg \pf (1-\nu)(1-\gamma^{-1}) }{4} \le \frac{-(1-\nu)}{64} \le
  \frac{-\nu}{4096(\constG \stepsize{\max}{} +1)^2} \le
  \frac{1}{8192(\constG \stepsize{\max}{} + 1)^2}.
\end{align*}
The result is shown. 
\end{proof}

\begin{cor} \label{cor: summability} Let the same assumptions as Theorem~\ref{thm: expected_decrease} hold. Suppose $\{X_k, G_k, F_k^0, F_k^s,
  \mathcal{A}_k, \Control{k}{}\}$ is the random process generated by
  Algorithm~\ref{alg:line_search_method}. Then we have
\[\sum_{k=0}^\infty {\bf{E}}[ \mathcal{A}_k \norm{\nabla f(X_k)}^2] < \infty.  \] 
\end{cor}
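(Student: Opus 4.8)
The plan is to telescope the expected decrease in $\Phi_k$ established in Theorem~\ref{thm: expected_decrease} (or its simplified form Theorem~\ref{thm:expected_decrease_simplified}). The inequality \eqref{eq: expect_non_convex_decrease} gives, for every $k \ge 0$,
\[
\Exp[\Phi_{k+1} - \Phi_k \mid \pastone] \le -c\left(\frac{\Stepsize{k}{}}{L^2}\norm{\nabla f(X_k)}^2 + \theta \Control{k}{2}\right),
\]
where $c = \frac{\pg \pf (1-\nu)(1-\gamma^{-1})}{4} > 0$ is a fixed constant. In particular, dropping the nonnegative $\Control{k}{2}$ term and rescaling, we get $\Exp[\Phi_{k+1} - \Phi_k \mid \pastone] \le -\frac{c}{L^2}\Stepsize{k}{}\norm{\nabla f(X_k)}^2$.

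First I would take total expectations of this conditional inequality (using the tower property), which is legitimate provided all relevant quantities are integrable — this is where the integrability remarks in the proof of Theorem~\ref{thm: expected_decrease} (equations \eqref{eq:integrable_1}, \eqref{eq:integrable_2}, and the footnote to Assumption~\ref{assumpt:key}(iii) together with Lemma~\ref{lem:uniform_integrable}) come in, ensuring $\Exp[\Phi_k]$ is finite for each $k$ and that the conditional expectations behave well. This yields
\[
\Exp[\Phi_{k+1}] - \Exp[\Phi_k] \le -\frac{c}{L^2}\,\Exp[\Stepsize{k}{}\norm{\nabla f(X_k)}^2].
\]
Summing from $k = 0$ to $N-1$ gives a telescoping sum on the left:
\[
\frac{c}{L^2}\sum_{k=0}^{N-1}\Exp[\Stepsize{k}{}\norm{\nabla f(X_k)}^2] \le \Exp[\Phi_0] - \Exp[\Phi_N] \le \Exp[\Phi_0] = \Phi_0,
\]
where the last step uses $\Phi_N \ge 0$ (which holds since $\Phi_k = \nu(f(X_k)-f_{\min}) + (1-\nu)\frac{1}{L^2}\Stepsize{k}{}\norm{\nabla f(X_k)}^2 + (1-\nu)\theta\Control{k}{2}$ is a sum of nonnegative terms by Assumption~\ref{assumpt: objective_function} and $\nu \in (0,1)$) and that $\Phi_0$ is deterministic given $x_0$. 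Letting $N \to \infty$ and using the monotone convergence theorem on the partial sums of nonnegative terms gives $\sum_{k=0}^\infty \Exp[\Stepsize{k}{}\norm{\nabla f(X_k)}^2] \le \frac{L^2 \Phi_0}{c} < \infty$, which is the claim.

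The main obstacle is the integrability/measurability bookkeeping needed to justify taking total expectations and telescoping — one must confirm that $\Exp[\Phi_k] < \infty$ for all $k$ (so the differences $\Exp[\Phi_{k+1}] - \Exp[\Phi_k]$ are well defined and the telescoping identity is valid) and that the conditional-expectation inequality from Theorem~\ref{thm: expected_decrease} can indeed be integrated against $\pastone$. These facts follow from the integrability statements already embedded in the proof of Theorem~\ref{thm: expected_decrease} (the bounds \eqref{eq:integrable_1}, \eqref{eq:integrable_2}, \eqref{eq:case_3_1} express $\Phi_{k+1} - \Phi_k$ in terms of integrable quantities) together with an induction on $k$ starting from the deterministic $\Phi_0$; everything else is the routine telescoping argument above.
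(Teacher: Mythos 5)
Your proof is correct and is essentially the paper's own argument: the paper likewise takes total expectations of \eqref{eq: expect_non_convex_decrease}, telescopes the sum $\sum_k \bigl(\Exp[\Phi_k]-\Exp[\Phi_{k+1}]\bigr)$, and bounds it by $\Phi_0$ using the nonnegativity of $\Phi_k$. Your additional care about integrability is a reasonable elaboration but does not change the route.
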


\begin{proof}[Proof of Corollary~\ref{cor: summability}] By taking
  expectations of \eqref{eq: expect_non_convex_decrease} and summing
  up, we deduce
\[ \tilde{C} \sum_{k=0}^\infty {\bf{E}}[\mathcal{A}_k \norm{\nabla f(X_k)}^2]
  \le  \sum_{k=0}^\infty {\bf{E}}[\Phi_k] - {\bf{E}}[\Phi_{k+1}] \le
  \Phi_0 < \infty,\]
where $\tilde{C}$ is the constant in front of the $\mathcal{A}_k
\norm{\nabla f(X_k)}^2$ in \eqref{eq: expect_non_convex_decrease}.
\end{proof}

\subsection{The liminf convergence}\label{sec:liminf} We are ready to prove the
liminf-type of convergence result, i.e. a subsequence of the iterates
drive the gradient of the objective function to zero. The proof
closely follows
\cite{ChenMenickellyScheinberg2014,TR_prob_models_good_fun} for trust
regions; we adapt their proofs to handle line search. 

We set 
\[\bar{\mathcal{A}} = \xi^{-1} \quad \text{where} \quad \xi \ge \max
  \left \{ \frac{\constG +
  L/2 + 2\constF}{1-\theta}, \frac{1}{\stepsize{\max}{}}  \right \}.\]
 For simplicity and without
 loss of generality, we assume that $\Stepsize{0}{} = \gamma^i
 \bar{\mathcal{A}}$ and $\stepsize{\max}{} = \gamma^j
 \bar{\mathcal{A}}$ for integers $i,j > 0$. In this case, for any $k$,
 $\Stepsize{k}{} = \gamma^i \bar{\mathcal{A}}$ for some integer $i$. 

\begin{thm} \label{thm: liminf_type} Let the assumptions of Theorem~\ref{thm: expected_decrease} hold. Then the sequence of random iterates
  generated by Algorithm~\ref{alg:line_search_method}, $\{X_k\}$,
  almost surely satisfy
\[ \liminf_{k \to \infty} \norm{\nabla f(X_k)} = 0. \]
\end{thm}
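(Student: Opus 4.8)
The plan is to argue by contradiction: suppose that with positive probability $\liminf_{k\to\infty}\norm{\nabla f(X_k)} \ge \varepsilon$ for some $\varepsilon > 0$. On this event, there is some (random) index after which $\norm{\nabla f(X_k)}^2 \ge \varepsilon^2/2$ for all subsequent $k$. By Corollary~\ref{cor: summability} we know $\sum_k \Exp[\Stepsize{k}{}\norm{\nabla f(X_k)}^2] < \infty$, which forces $\Stepsize{k}{}\norm{\nabla f(X_k)}^2 \to 0$ almost surely along a subsequence, and in fact (combined with the monotone-type behavior we can extract) $\Stepsize{k}{}\to 0$ almost surely on the bad event. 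So the contradiction we are driving toward is: on an event of positive probability, $\Stepsize{k}{}$ goes to zero, yet the dynamics of Algorithm~\ref{alg:line_search_method} prevent this.

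The key mechanism preventing $\Stepsize{k}{}\to 0$ is the upward drift in the step-size random walk. First I would invoke Lemma~\ref{lem: good_est_good_model_successful}: whenever the gradient is $\constG$-accurate, the function estimates are $\constF$-accurate, and $\Stepsize{k}{} \le \bar{\mathcal{A}} = \xi^{-1} \le \frac{1-\theta}{\constG + L/2 + 2\constF}$, the step is successful, so $\Stepsize{k+1}{} = \gamma\Stepsize{k}{}$. Since the gradient is accurate with probability at least $\pg > 1/2$ and the function estimates with probability at least $\pf > 1/2$ (conditionally on the past), and $\pg\pf$ can be taken $> 1/2$, the event ``$\Stepsize{k}{}$ increases when below $\bar{\mathcal{A}}$'' has conditional probability $> 1/2$. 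This is exactly the setup for a biased random walk with upward drift: letting $W_{k+1} = +1$ on the good/successful outcome and $-1$ otherwise, one gets $1_{\{\Stepsize{k}{} < \bar{\mathcal{A}}\}}\Stepsize{k+1}{} \ge 1_{\{\Stepsize{k}{} < \bar{\mathcal{A}}\}}\Stepsize{k}{}\gamma^{W_{k+1}}$ with $\Prob(W_{k+1}=1\mid \mathcal F_k) > 1/2$. A standard property of such walks (used in the trust-region analyses of \cite{ChenMenickellyScheinberg2014,TR_prob_models_good_fun} that this proof follows) is that $\Stepsize{k}{}$ returns to and exceeds $\bar{\mathcal{A}}$ infinitely often almost surely; equivalently, $\limsup_k \Stepsize{k}{} \ge \bar{\mathcal{A}} > 0$ a.s. This directly contradicts $\Stepsize{k}{}\to 0$ on a positive-probability event.

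To make the random-walk comparison rigorous, I would set $B_\ell$ to be the event that $\norm{\nabla f(X_k)}\ge \varepsilon$ for all $k \ge \ell$ (so the bad event is $\bigcup_\ell B_\ell$, and it suffices to show $\Prob(B_\ell)=0$ for each $\ell$), and work conditionally on $B_\ell$. On $B_\ell$, Corollary~\ref{cor: summability} gives $\sum_{k\ge\ell}\Exp[\Stepsize{k}{}\mathbf 1_{B_\ell}] < \infty$ (since $\norm{\nabla f(X_k)}^2$ is bounded below by $\varepsilon^2$ there), hence $\Stepsize{k}{}\mathbf 1_{B_\ell}\to 0$ a.s. Then I would show that this contradicts the walk behaviour: pick $p$ with $1/2 < p < \pg\pf$ (using $\pg,\pf > 1/2$ from Theorem~\ref{thm: expected_decrease}, and that we may further ensure $\pg\pf > 1/2$), couple the true step-size sequence below $\bar{\mathcal A}$ with a dominating biased random walk as above, and use that such a walk is recurrent to levels at or above its start with probability one. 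Hence $\Stepsize{k}{} \ge \bar{\mathcal A}$ infinitely often a.s., contradicting $\Stepsize{k}{}\to 0$; therefore $\Prob(B_\ell)=0$, and taking a union over $\ell$ gives $\liminf_k \norm{\nabla f(X_k)} = 0$ a.s.

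The main obstacle is the step where one passes from ``summability of $\Exp[\Stepsize{k}{}\norm{\nabla f(X_k)}^2]$'' plus ``upward drift of the step-size walk'' to the contradiction — one has to be careful that the walk comparison (Assumption~\ref{assump: random_variable}(ii)) only controls $\Stepsize{k}{}$ from below when it is \emph{small} (below $\bar{\mathcal A}$), so the event that $\Stepsize{k}{}$ stays small forever must be shown to have probability zero, using the recurrence of the dominating biased walk rather than a naive drift argument. Handling the conditioning on $B_\ell$ correctly (so that the conditional process still has the required drift, which follows because the accuracy definitions and Assumption~\ref{assumpt:key} are stated conditionally on the filtration $\pastone$) is the other delicate point; once that is set up, the remainder mirrors the trust-region liminf proofs of \cite{ChenMenickellyScheinberg2014,TR_prob_models_good_fun}.
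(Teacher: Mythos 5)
Your proposal follows essentially the same route as the paper: contradiction, Corollary~\ref{cor: summability} forcing $\Stepsize{k}{}\norm{\nabla f(X_k)}^2\to 0$ and hence $\Stepsize{k}{}\to 0$ on the bad event, Lemma~\ref{lem: good_est_good_model_successful} supplying the upward drift below $\bar{\mathcal A}$, and recurrence of a biased random walk to conclude $\limsup_k \Stepsize{k}{}\ge\bar{\mathcal A}$ a.s.

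One caution on the step you flag as delicate: your plan to ``work conditionally on $B_\ell$'' and assert that the conditional process retains the drift because Assumption~\ref{assumpt:key} is stated conditionally on $\pastone$ does not go through as written --- $B_\ell$ is a future-measurable (tail) event, so conditioning on it can destroy the inequality $\Prob(W_{k+1}=1\mid\mathcal F_k)\ge \pg\pf$. The paper avoids this entirely: it defines an auxiliary walk $Z_k$ on the whole probability space, proves the pathwise domination $R_k=\log(\Stepsize{k}{})\ge Z_k$ for every $\omega$ by induction (using Lemma~\ref{lem: good_est_good_model_successful} on accurate iterations with small stepsize), and then invokes $\Prob(\limsup_k Z_k\ge\log\bar{\mathcal A})=1$ \emph{unconditionally}, which contradicts the positive probability of $R_k\to-\infty$. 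Your alternative phrasing --- couple with a dominating walk and use its recurrence ``rather than a naive drift argument'' --- is exactly this fix, so the proof is salvageable as you sketch it, but the conditional-drift justification you offer for the $B_\ell$ route should be dropped in favor of the unconditional pathwise comparison.
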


\begin{proof}  We prove this result by contradiction. With positive
  probability, there
  exists constants $\mathcal{E}(\omega) > 0$ and $K_0(\omega)$ such that $\norm{\nabla f(X_k)} > \mathcal{E}$ for all
  $k \ge K_0$. Because of Corollary~\ref{cor: summability} following Theorem~\ref{thm: expected_decrease}, we have that $\mathcal{A}_k \norm{\nabla
    f(X_k)}^2 \to 0$ a.e. Hence, we have
\[{\bf{Pr}}( \{ \omega \, : \, \norm{\nabla f(X_k)} > \mathcal{E}
  \text{ for all $k \ge K_0$} \text{ and } \lim_{k \to \infty}
  \mathcal{A}_k \norm{\nabla f(X_k)}^2 = 0 \} ) > 0.\]
Let $\{x_k\}, \{\alpha_k\}, \varepsilon, \text{ and } k_0$ be the realizations of
$\{X_k\}$, $\{\mathcal{A}_k\}$, $\mathcal{E}$, and $K_0$, respectively, for which $\norm{\nabla f(x_k)} >
  \varepsilon$ for all $k \ge k_0$ and $\displaystyle \lim_{k \to \infty} \alpha_k
  \norm{\nabla f(x_k)}^2 = 0$. An immediate consequence is that $\alpha_k \to
  0$. Consequently, we deduce that 
\[ 0 < \textbf{Pr}(\{ \omega \, : \, \norm{\nabla f(X_k)} > \mathcal{E}
  \text{ for all $k \ge K_0$} \text{ and } \lim_{k \to \infty}
  \mathcal{A}_k \norm{\nabla f(X_k)}^2 = 0 \})
  \le \textbf{Pr}( \{ \omega \, : \, \lim_{k \to \infty} \mathcal{A}_k = 0\}).\]
We define two new random variables $R_k = \log(\mathcal{A}_k)$ and
$Z_k$ defined by the recursion 
\[Z_{k+1} = \min \left \{ \log(\bar{\mathcal{A}}), 1_{I_k}1_{J_k}  (\log(\gamma) +
    Z_k) + (Z_k-\log(\gamma)) (1-1_{I_k J_k}) \right
  \}  \quad  \text{and} \quad Z_0 = R_0 = \log(\alpha_0). \]
  We observe that $Z_k$ is bounded from below and that $R_k$, by our assumption has a positive probability of diverging to $-\infty$. 
  We establish a contradiction by proving that $R_k \ge Z_k$, which is what we do below. 
  
The sequence of random variables increase by $\log(\gamma)$, unless it
hits the maximum,  with probability $\pf\pg$ and otherwise decreases by
$\log(\gamma)$. Our main argument is to show that $ \displaystyle \{ \omega \, : \,
\lim_{k \to \infty} \mathcal{A}_k =
0\} = \{ \omega \, : \, \lim_{k \to \infty} R_k = -\infty\}$ are null sets. By
construction, the random variables $R_k$ and $Z_k$ are measurable with
respect to the same $\sigma$-algebra namely $\mathcal{F}_{k-1}^{M
  \cdot F}$ for $k \ge 0$. We next show that $R_k \ge Z_k$. The following proceeds
by induction whereas the base case is given by definition. Without
loss of generality assume there exists a $j \in \mathbb{Z}$ such that
$\gamma^j \alpha_0 = \bar{\mathcal{A}}$. Assume the induction
hypothesis, namely, $R_k \ge Z_k$. If $R_k > \log(\bar{\mathcal{A}})$,
then 
\[R_{k+1} \ge \log(\gamma^{-1} \mathcal{A}_k) = R_k - \log(\gamma)
\ge \log(\bar{\mathcal{A}}) \ge Z_k,\] 
where the third inequality follows because the assumption $R_{k+1}$ strictly larger than
$\log ( \bar{\mathcal{A}})$ implies that $R_{k+1} \ge
\log(\bar{\mathcal{A}}) + \log(\gamma)$. Now suppose $R_{k+1} \le
\log ( \bar{\mathcal{A}}) $ and we consider some cases. If $1_{I_k} 1_{J_k}
=1$, then by Lemma~\ref{lem: good_est_good_model_successful} we know
$R_{k+1} = \log ( \mathcal{A}_{k+1} ) = \min \left \{ \log(\alpha_{\max}), R_k +
\log(\gamma) \right \}.$ Suppose $R_{k+1} = \log(\alpha_{\max})$. Then
by definition of $\bar{\mathcal{A}}$ and $Z_{k+1}$, $R_{k+1} \ge
\log ( \bar{\mathcal{A}} ) \ge Z_{k+1}$. On the other hand, suppose $R_{k+1} =
R_k + \log(\gamma)$. Then by the induction hypothesis, we have
$R_{k+1} \ge Z_k + \log(\gamma) \ge \min\{ \bar{\mathcal{A}}, Z_k +
\log(\gamma) \} = Z_{k+1}$. Next, suppose $1_{I_k}1_{J_k} = 0$. It
follows that $Z_{k+1} = Z_k - \log(\gamma) \ge R_k - \log(\gamma) =
\log(\mathcal{A}_k \gamma^{-1}) \le R_{k+1}$. Therefore, we showed
that $R_k \ge Z_k$ for all $k \ge 0$. Moreover, we see that $\{Z_k\}$
is a random walk with a maximum and a drift upward. Therefore, 
\[ 1 = {\bf{Pr}}(\limsup_k Z_k \ge \log ( \bar{\mathcal{A}} ) )  = {\bf{Pr}}(\limsup_k
  R_k \ge \log ( \bar{\mathcal{A}} ) ).\]
However, this contradicts the fact that ${\bf{Pr}}(\omega : \limsup_k
R_k = -\infty) > 0$.
\end{proof}

\subsection{Convergence rates for the nonconvex case} Our primary goal
in this paper is to bound the expected number of steps that the
algorithm takes until $\norm{\nabla f(X_k)} \le \varepsilon$. Define
the stopping time
\[T_\varepsilon = \inf \{ k \ge 0 \, : \, \norm{\nabla f(X_k)} <
\varepsilon\}.\] We show in this section, under the simplified
assumptions on the constant from Theorem~\ref{thm:expected_decrease_simplified}
\[ \Exp [ T_{\varepsilon} ] \le \mathcal{O}(1) \cdot \frac{\pg
    \pf}{2\pg \pf-1} \cdot
   \frac{L^3(\constG \stepsize{\max}{} + 1)^2 \Phi_0}{\varepsilon^2} + 1.\]
Here $\mathcal{O}(1)$ hides universal constants and dependencies on
$\theta$, $\gamma$, $\stepsize{\max}{}$.
We derive this result from Theorem~\ref{thm:renewal_reward_stop_time};
therefore, the remainder of this section is devoted to showing Assumption~\ref{assump:
  random_variable} holds. 
Given Theorem~\ref{thm: expected_decrease}, it is immediate the random variable
$\Phi_k$ defined, as in equation \eqref{eq:phi}, satisfies Assumption~\ref{assump:
  random_variable} (iii) by multiplying both side by the indicator,
$1_{\{T_\varepsilon > k\}}$. In particular, we define the function
$h(\Stepsize{k}{}) = \Stepsize{k}{} \varepsilon^2$
(i.e. $\approx \Stepsize{k}{} \norm{\nabla f(X_k)}^2$)  to obtain from
Theorem~\ref{thm: expected_decrease}
\[\Exp[ 1_{\{T_{\varepsilon} > k\} } (\Phi_{k+1} - \Phi_{k}) | \pastone] \le -\Theta
  h(\Stepsize{k}{}) 1_{\{T_{\varepsilon} > k\} } ,\]
where $\Theta = \tfrac{1}{8192L^2(\constG \stepsize{\max}{} +
  1)^2}$. It remain to show Assumption~\ref{assump: random_variable} (ii) holds.

\begin{lem} Let $\pg$ and $\pf$ be such that
  $\pg \pf\ge 1/2$ then Assumption~\ref{assump: random_variable} (ii)
is satisfied for $W_k = 2(1_{\goodModel \cap \goodFunction} - 1/2)$, $\lambda = \log(\gamma)$,
and $p = \pg \pf$. 
\end{lem}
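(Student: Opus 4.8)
The plan is to verify the two requirements hidden in Assumption~\ref{assump: random_variable}(ii) separately: the distributional property of $W_k$, and the one‑step lower bound on $\Stepsize{k}{}$. In the correspondence with the renewal–reward framework, $\Phi_k$, $\Stepsize{k}{}$ and $\Control{k}{}$ are all measurable with respect to $\pastone$, while the accuracy indicator $1_{\goodModel\cap\goodFunction}$ — which is precisely the ``fresh'' randomness governing the update $\Stepsize{k}{}\to\Stepsize{k+1}{}$ — plays the role of $W_{k+1}$; so I would begin by pinning down this identification of filtrations and then check the two conditions for $W_k=2(1_{\goodModel\cap\goodFunction}-1/2)$, $\lambda=\log\gamma$, $p=\pg\pf$.

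For the distributional property, note $W_k\in\{-1,+1\}$ by construction, and the elementary consequence of Assumption~\ref{assumpt:key} recorded immediately after it gives $\Exp[1_{\goodModel\cap\goodFunction}\mid\pastone]\ge\pg\pf$, hence $\Prob(W_{k+1}=1\mid\mathcal{F}_k)\ge\pg\pf$ and $\Prob(W_{k+1}=-1\mid\mathcal{F}_k)\le 1-\pg\pf$, with $\pg\pf>\tfrac12$ by hypothesis. (Should one want exact equality in \eqref{w_process} rather than a lower bound, one thins $\goodModel\cap\goodFunction$ by an independent uniform variable; since this only ever downgrades $W_{k+1}=1$ to $W_{k+1}=-1$, it weakens the right‑hand side of the one‑step bound and so cannot invalidate it.)

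For the one‑step bound, take $\bar{\mathcal A}=\xi^{-1}$ as fixed in Section~\ref{sec:liminf}, so that by the choice of $\xi$ one has $\bar{\mathcal A}\le\stepsize{\max}{}$ and $\bar{\mathcal A}\le (1-\theta)/(\constG+L/2+2\constF)$, and by the discretization convention $\Stepsize{k}{}$ always lies on the grid $\{\gamma^m\bar{\mathcal A}:m\in\mathbb{Z},\ m\le j\}$. Since $e^{\lambda W_{k+1}}=\gamma^{W_{k+1}}\in\{\gamma^{-1},\gamma\}$, I would split into three cases. If $\Stepsize{k}{}>\bar{\mathcal A}$, then $\Stepsize{k}{}=\gamma^m\bar{\mathcal A}$ with $m\ge 1$, so using $\Stepsize{k+1}{}\ge\gamma^{-1}\Stepsize{k}{}$ (true on every iteration) we get $\Stepsize{k+1}{}\ge\gamma^{m-1}\bar{\mathcal A}\ge\bar{\mathcal A}\ge\min\{\gamma^{W_{k+1}}\Stepsize{k}{},\bar{\mathcal A}\}$, independently of $W_{k+1}$. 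If $\Stepsize{k}{}\le\bar{\mathcal A}$ and $W_{k+1}=-1$, then $\Stepsize{k+1}{}\ge\gamma^{-1}\Stepsize{k}{}=\gamma^{W_{k+1}}\Stepsize{k}{}\ge\min\{\gamma^{W_{k+1}}\Stepsize{k}{},\bar{\mathcal A}\}$. Finally, if $\Stepsize{k}{}\le\bar{\mathcal A}$ and $W_{k+1}=1$, then $\goodModel\cap\goodFunction$ occurs and $\Stepsize{k}{}\le(1-\theta)/(\constG+L/2+2\constF)$, so Lemma~\ref{lem: good_est_good_model_successful} makes the step successful; hence $\Stepsize{k+1}{}=\min\{\stepsize{\max}{},\gamma\Stepsize{k}{}\}=\gamma\Stepsize{k}{}=\gamma^{W_{k+1}}\Stepsize{k}{}$, using $\gamma\Stepsize{k}{}\le\gamma\bar{\mathcal A}\le\stepsize{\max}{}$. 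Multiplying through by $1_{\{T_\varepsilon>k\}}$ yields Assumption~\ref{assump: random_variable}(ii).

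The part I expect to require the most care is not any computation but the index/measurability bookkeeping — making precise which $\sigma$‑algebra is $\mathcal{F}_k$ so that $W_{k+1}$ is genuinely new relative to it while $\Stepsize{k}{}$, $\Control{k}{}$ and $\Phi_k$ are not — together with the mild gap between the ``$\ge\pg\pf$'' supplied by Assumption~\ref{assumpt:key} and the exact probability appearing in \eqref{w_process}; both are dispatched by the remarks above. The remaining inequalities on $\Stepsize{k}{}$ are routine once the discretization convention and the bound $\bar{\mathcal A}\le(1-\theta)/(\constG+L/2+2\constF)$ are invoked, the only substantive ingredient being Lemma~\ref{lem: good_est_good_model_successful} in the third case.
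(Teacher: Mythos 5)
Your proof is correct and follows essentially the same route as the paper: the same case split on whether $\Stepsize{k}{}$ exceeds $\bar{\mathcal A}$, with Lemma~\ref{lem: good_est_good_model_successful} supplying the successful step when the gradient and estimates are accurate and $\Stepsize{k}{}\le\bar{\mathcal A}$. You are in fact somewhat more careful than the paper's own argument, which omits the verification of the distributional property of $W_{k+1}$ and writes $\max\{\stepsize{\max}{},\gamma\stepsize{k}{}\}$ where $\min$ is meant, leaving the check $\gamma\Stepsize{k}{}\le\stepsize{\max}{}$ implicit.
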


\begin{proof} By the choice of $\bar{\mathcal{A}}$ we have that $\bar{\mathcal{A}} = \mathcal{A}_0e^{  \lambda \bar j}$
for some $\bar j\in Z$ and $\bar j\leq 0$. It remains to show that 
\begin{align*} 1_{\{T_{\varepsilon} > k\}} \mathcal{A}_{k+1} 
&\ge 1_{\{T_{\varepsilon} > k\}} \min \left \{ \bar{\mathcal{A}},
  \min\{ \stepsize{\max}{}, \gamma \Stepsize{k}{}\} I_kJ_k +
  \gamma^{-1} \Stepsize{k}{}
  (1-1_{\goodModel \cap \goodFunction}) \right \}.
\end{align*}
Suppose $\Stepsize{k}{} > \bar{\mathcal{A}}$. Then $\Stepsize{k}{} \ge
\gamma \bar{\mathcal{A}}$ and hence $\Stepsize{k+1}{} \ge
\bar{\mathcal{A}}$. Now, assume that $\Stepsize{k}{} \le
\bar{\mathcal{A}}$. By definition of $\xi$, we have that 
\[ \Stepsize{k}{} \le \frac{1-\theta}{\constG + L/2 + 2 \constF}. \]
Assume that $1_{\goodModel} = 1$ and $1_{\goodFunction} = 1$. It follows from Lemma~\ref{lem:
  good_est_good_model_successful} that the iteration $k$ is
successful, i.e. $x_{k+1} = x_k + s_k$ and $\stepsize{k+1}{} = \max
\{\stepsize{\max}{}, \gamma \stepsize{k}{}\}$. If $I_k J_k = 0$, then
$\stepsize{k+1}{} \ge \gamma^{-1} \stepsize{k}{}$. 
\end{proof}

Finally substituting the expressions for $h$, $\bar{\mathcal{A}}$, and $\Phi_k$ into the
bound on $\Exp[T_{\varepsilon}]$ from
Theorem~\ref{thm:renewal_reward_stop_time} we obtain the following
complexity result. 

\begin{thm}\label{thm:noncvx_complexity} Under the assumptions in
  Theorem~\ref{thm:expected_decrease_simplified}, suppose the
  probabilities $\pg, \pf > 1/2$ satisfy \begin{gather*}
\pg \ge \frac{16}{17} \qquad 
\text{and} \qquad \frac{\pg\pf}{\sqrt{1-\pf}} \ge \max \left \{ 1024 \varConstF L^2 (\constG \stepsize{\max}{}+1)^2 + 64, 1024 (\constG \stepsize{\max}{}+1)^2 \right \} . 
\end{gather*}
with $\frac{\nu}{1-\nu} = 64(\constG \stepsize{\max}{} + 1)^2$. Then the expected number of iterations that Algorithm~\ref{alg:line_search_method} takes
until $\norm{\nabla f(X_k)}^2 \le \varepsilon$ occurs is bounded as
follows
\begin{align*}
\Exp [ T_{\varepsilon}] \le \frac{\pg\pf}{2\pg\pf-1} \cdot \frac{L^2
  (\constG + L/2 + 2 \constF)(\constG \stepsize{\max}{}
  +1)^2}{\Theta \varepsilon^2} \Phi_0  + 1,
\end{align*}
where $\Theta = 1/16384$ and $\Phi_0 = \nu(f(X_0)-f_{\min}) + (1-\nu)
(1/L^2 \Stepsize{0}{} \norm{\nabla f(X_0)}^2 + 1/2 \Control{0}{2})$.
\end{thm}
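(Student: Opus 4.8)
The plan is to obtain Theorem~\ref{thm:noncvx_complexity} as an essentially mechanical specialization of the renewal--reward bound of Theorem~\ref{thm:renewal_reward_stop_time}. I would apply that theorem to the process $\{\Phi_k,\Stepsize{k}{},W_k\}$, where $\Phi_k$ is the potential from \eqref{eq:phi}, $\mathcal{A}_k=\Stepsize{k}{}$, $W_k=2(1_{\goodModel\cap\goodFunction}-1/2)$, $\lambda=\log\gamma$, $p=\pg\pf$, $\bar{\mathcal{A}}=\xi^{-1}$ with $\xi=\max\{(\constG+L/2+2\constF)/(1-\theta),\,1/\stepsize{\max}{}\}$, and $h(\alpha)=\alpha\varepsilon^2$. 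The bulk of the task is then to verify the three parts of Assumption~\ref{assump: random_variable}; the substantive analysis is already available in the preceding results, so the remaining work is to assemble it and track the constants.

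For part~(iii), I would start from the expected--decrease inequality \eqref{eq: expect_non_convex_decrease} of Theorem~\ref{thm: expected_decrease} (equivalently Theorem~\ref{thm:expected_decrease_simplified}) and multiply through by the indicator $1_{\{T_\varepsilon>k\}}$. On the event $\{T_\varepsilon>k\}$ one has $\norm{\nabla f(X_k)}\ge\varepsilon$, so $\Stepsize{k}{}\norm{\nabla f(X_k)}^2\ge\Stepsize{k}{}\varepsilon^2=h(\Stepsize{k}{})$; discarding the nonnegative $\Control{k}{2}$ term then yields $\Exp[1_{\{T_\varepsilon>k\}}(\Phi_{k+1}-\Phi_k)\mid\pastone]\le-\Theta\,h(\Stepsize{k}{})1_{\{T_\varepsilon>k\}}$ with $\Theta$ a fixed multiple of $1/\bigl(L^2(\constG\stepsize{\max}{}+1)^2\bigr)$. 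For part~(i), by construction in Algorithm~\ref{alg:line_search_method} the quantity $\Stepsize{0}{}$ is deterministic and $\Stepsize{k}{}\le\stepsize{\max}{}$ for all $k$; the w.l.o.g.\ normalization $\Stepsize{0}{},\stepsize{\max}{}\in\gamma^{\mathbb{Z}}\bar{\mathcal{A}}$ (with the exponent for $\Stepsize{0}{}$ positive) guarantees both that $\alpha_{\max}=\stepsize{\max}{}$ is of the form $\mathcal{A}_0e^{\lambda j_{\max}}$ and that $\bar{\mathcal{A}}=\mathcal{A}_0e^{\lambda\bar j}$ for some integer $\bar j\le 0$ lying on the multiplicative grid of attainable step sizes. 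Part~(ii) is exactly the content of the lemma immediately preceding the theorem: whenever $\Stepsize{k}{}\le\bar{\mathcal{A}}\le(1-\theta)/(\constG+L/2+2\constF)$ and the event $\goodModel\cap\goodFunction$ occurs, Lemma~\ref{lem: good_est_good_model_successful} forces a successful step, so $\Stepsize{k+1}{}=\min\{\stepsize{\max}{},\gamma\Stepsize{k}{}\}$, while in the complementary case the algorithm still guarantees $\Stepsize{k+1}{}\ge\gamma^{-1}\Stepsize{k}{}$; since $\Prob(\goodModel\cap\goodFunction\mid\pastone)\ge\pg\pf$ by Assumption~\ref{assumpt:key} and $p=\pg\pf>1/2$ under the stated conditions \eqref{eq:bound_pg}--\eqref{eq:bound_product}, this is precisely the drift inequality of Assumption~\ref{assump: random_variable}(ii) with the above $W_k$.

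With Assumption~\ref{assump: random_variable} verified, Theorem~\ref{thm:renewal_reward_stop_time} gives $\Exp[T_\varepsilon]\le\frac{p}{2p-1}\cdot\frac{\Phi_0}{\Theta\,h(\bar{\mathcal{A}})}+1$. Substituting $h(\bar{\mathcal{A}})=\bar{\mathcal{A}}\varepsilon^2=\xi^{-1}\varepsilon^2$ with $\xi=(\constG+L/2+2\constF)/(1-\theta)$, inserting the explicit $\Theta$, and fixing $\theta=1/2$, $\gamma=2$ then produces the claimed bound, with the residual numerical factor collapsing to $\Theta=1/16384$ and $\Phi_0$ given by \eqref{eq:phi} at $k=0$.

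The main obstacle is not any single hard estimate but the careful matching of the algorithm's dynamics to the abstract framework: defining $W_k$ from the joint ``good gradient, good estimate'' event and checking that its conditional law \eqref{w_process} holds with $p=\pg\pf$; ensuring $\bar{\mathcal{A}}$ genuinely sits on the grid $\{\gamma^i\bar{\mathcal{A}}\}$ of step sizes the algorithm can realize (which is why the normalization of $\Stepsize{0}{}$ and $\stepsize{\max}{}$ is imposed); and correctly carrying the indicator $1_{\{T_\varepsilon>k\}}$ through the expected--decrease inequality so that $h(\Stepsize{k}{})=\Stepsize{k}{}\varepsilon^2$ replaces $\Stepsize{k}{}\norm{\nabla f(X_k)}^2$. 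Everything else is bookkeeping of the universal constants accumulated in Theorem~\ref{thm: expected_decrease} and in the definition of $\bar{\mathcal{A}}$.
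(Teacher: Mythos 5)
Your proposal follows the paper's own route essentially step for step: the same potential $\Phi_k$, the same choices $W_k=2(1_{\goodModel\cap\goodFunction}-1/2)$, $p=\pg\pf$, $h(\alpha)=\alpha\varepsilon^2$, and $\bar{\mathcal{A}}=\xi^{-1}$, with Assumption~\ref{assump: random_variable}(iii) coming from Theorem~\ref{thm: expected_decrease} restricted to $\{T_\varepsilon>k\}$ and part~(ii) from Lemma~\ref{lem: good_est_good_model_successful}, before invoking Theorem~\ref{thm:renewal_reward_stop_time}. The argument is correct and the constant bookkeeping matches the paper's.
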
 

\subsection{Convex Case} 
We now analyze line search
(Algorithm~\ref{alg:line_search_method}) under the setting that the objective
function is convex. 
\begin{assumption} \label{assump: convex_conditons} \rm{Suppose in addition to Assumption \ref{assumpt: objective_function}, $f$ is  convex. Let $x^*$
  denote the global minimizer of $f$ and $f^* = f(x^*)$. We assume
  there exists a constant $D$ such that
\[ \norm{x-x^*} \le D \quad \text{for all $x\in \Omega$,}\]
where $\Omega$ is the set that contains all iteration realizations as stated in Assumption \ref{assumpt: objective_function}. 
Moreover, we assume there exists a $L_f > 0$ such that $\norm{\nabla
  f(x)} \le L_f$ for all $x\in \Omega$. }
\end{assumption}

\begin{remark}
In deterministic optimization it is common to assume that function $f$ has bounded level sets and that all iterates remain within the bounded set 
defined by $f(x)\leq f(x_0)$. For the stochastic case, it is not guaranteed that all iterates remain in the bounded level set because it is possible to take steps that increase the function value. Clearly iterates remain in a (large enough) bounded set with high probability.  Alternatively, if it is known that the optimal solution lies within some bounded set, Algorithm ~\ref{alg:line_search_method} can be simply modified to project iterates onto that set. This modified version for the convex case can be analyzed in almost identical way as is done in Theorem \ref{thm: expected_decrease}. However, for simplicity of the presentation, for the convex case we simply impose Assumption \ref{assump: convex_conditons}. 
\end{remark}

In convex setting, the goal is to bound the expected number of
iterations $T_{\varepsilon}$ of Algorithm~\ref{alg:line_search_method} until 
\[f(x_k)-f^* < \varepsilon.\] 
In deterministic case, the complexity bound is derived by showing that $1/(f(x_k)-f^*)$ has a constant decrease, until the $\varepsilon$-accuracy is reached.  For the randomized line search we follow the same idea, replacing $f(x_k)-f^*$ with
$\Phi_k$ (modified by substituting $f_{\min}$ in \eqref{eq:phi} by $f^*$) and defining the function 
\begin{equation}\label{eq:convex_phi}
\Psi_{k} =  \frac{1}{\nu \varepsilon} -
\frac{1}{\Phi_{k \wedge T_{\varepsilon }}}\footnote{We use $a \wedge b = \min\{a, b\}$.
}.
\end{equation}
We show the random process $\{\Psi_k, \mathcal{A}_k\}$ satisfies Assumption~\ref{assump:
  random_variable} for all $k$. To simplify the argument, we impose an upper bound on $\Control{k}{}$.

\begin{assumption} \label{assump: delta} \rm{ Suppose there exists a constant
    $\control{\max}{}$ such that the random variable $\Control{k}{}
    \le \control{\max}{}$. First, with a simple modification to
    Algorithm~\ref{alg:line_search_method}, we can impose this
    assumption. Second, the dynamics of the algorithm suggest
    $\Control{k}{}$ eventually decreases until it is smaller than any
    $\varepsilon > 0$.}
\end{assumption}

The random variables $\Stepsize{k}{}$  behaves the same as in
the nonconvex setting. We ensure the positivity of the random process
$\{\Psi_k\}$ by incorporating the stopping time $T_{\varepsilon}$
  directly into the definition of $\Psi$; hence the dependency on
  $\varepsilon$ for convergence rates is built directly into the
  function $\Psi$. The main component of this section is proving
Assumption~\ref{assump: random_variable} (iii) holds for this $\Psi_k$,
i.e. an expected improvement occurs.

\begin{thm}\label{thm: convex_expectation} Let Assumptions~\ref{assumpt: objective_function},
  \ref{assumpt:key},
  \ref{assump: convex_conditons}, and \ref{assump: delta}
  hold. Suppose $\{X_k, G_k, F_k^0, F_k^s, \Stepsize{k}{},
  \Control{k}{} \}$ is the random process
  generated by Algorithm~\ref{alg:line_search_method}. Then there exists
  probabilities $\pg$ and $\pf$ and a constant $\nu \in (0,1)$ such that
\[ 1_{\{T_{\varepsilon} > k\}}\Exp [ \Psi_{k+1}-\Psi_k | \pastone] \le
  -\frac{\pg \pf (1-\nu)(1-\gamma^{-1})}{8(\nu D L+
    \tfrac{ (1-\nu)\stepsize{\max}{}L_f}{L} + (1-\nu) \sqrt{\theta}
    \control{\max}{} )^2} \Stepsize{k}{} 1_{\{T_{\varepsilon} > k\}},\]
where $\Psi_k$ is defined in \eqref{eq:convex_phi}.
In particular, the probabilities $\pg$ and $\pf$ and constant $\nu$
satisfy \eqref{eq:bound_nu}, \eqref{eq:bound_pg}, and \eqref{eq:bound_product}
from Theorem~\ref{thm: expected_decrease}. 
\end{thm}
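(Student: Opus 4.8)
The plan is to transfer the one-step expected decrease of $\Phi_k$ established in Theorem~\ref{thm: expected_decrease} to the reciprocal potential $\Psi_k$, exactly as the deterministic convex analysis turns a per-step decrease of $f(x_k)-f^*$ proportional to $\norm{\nabla f(x_k)}^2$ into a per-step constant increase of $1/(f(x_k)-f^*)$. First I would fix $k$ and restrict to the event $\{T_\varepsilon>k\}$, on which $k\wedge T_\varepsilon=k$ and $(k+1)\wedge T_\varepsilon=k+1$, so that $\Psi_{k+1}-\Psi_k=\tfrac{1}{\Phi_k}-\tfrac{1}{\Phi_{k+1}}$; here $\Phi_k\ge\nu\varepsilon>0$ and $\Phi_{k+1}\ge(1-\nu)\theta\Control{k+1}{2}>0$, so everything is well defined. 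Since $t\mapsto-1/t$ is concave on $(0,\infty)$ it lies below its tangent line at $\Phi_k$, which gives the pointwise estimate $\Psi_{k+1}-\Psi_k\le(\Phi_{k+1}-\Phi_k)/\Phi_k^2$. As $\{T_\varepsilon>k\}$ and $\Phi_k$ are both $\pastone$-measurable, multiplying by $1_{\{T_\varepsilon>k\}}$, conditioning on $\pastone$, and substituting \eqref{eq: expect_non_convex_decrease} gives
\[1_{\{T_\varepsilon>k\}}\Exp[\Psi_{k+1}-\Psi_k\mid\pastone]\le-\,1_{\{T_\varepsilon>k\}}\,\frac{\pg\pf(1-\nu)(1-\gamma^{-1})}{4\,\Phi_k^2}\Big(\tfrac{\Stepsize{k}{}}{L^2}\norm{\nabla f(X_k)}^2+\theta\Control{k}{2}\Big).\]

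The second step is to bound $\Phi_k$ from above by a multiple of $M:=\max\{\norm{\nabla f(X_k)}/L,\ \sqrt{\theta}\,\Control{k}{}\}$. Convexity with $\norm{X_k-x^*}\le D$ gives $f(X_k)-f^*\le D\norm{\nabla f(X_k)}$, so $\nu(f(X_k)-f^*)\le\nu DL\cdot(\norm{\nabla f(X_k)}/L)\le\nu DL\,M$; the bounds $\Stepsize{k}{}\le\stepsize{\max}{}$ and $\norm{\nabla f(X_k)}\le L_f$ give $(1-\nu)\tfrac{\Stepsize{k}{}}{L^2}\norm{\nabla f(X_k)}^2\le(1-\nu)\tfrac{\stepsize{\max}{}L_f}{L}\cdot(\norm{\nabla f(X_k)}/L)\le(1-\nu)\tfrac{\stepsize{\max}{}L_f}{L}M$; and $\Control{k}{}\le\control{\max}{}$ from Assumption~\ref{assump: delta} gives $(1-\nu)\theta\Control{k}{2}\le(1-\nu)\sqrt{\theta}\,\control{\max}{}\cdot\sqrt{\theta}\,\Control{k}{}\le(1-\nu)\sqrt{\theta}\,\control{\max}{}\,M$. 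Summing the three yields $\Phi_k\le C\,M$ with $C:=\nu DL+\tfrac{(1-\nu)\stepsize{\max}{}L_f}{L}+(1-\nu)\sqrt{\theta}\,\control{\max}{}$, the quantity squared in the denominator of the target inequality.

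The third step combines the two: a short case split on whether $M^2$ equals $\norm{\nabla f(X_k)}^2/L^2$ or $\theta\Control{k}{2}$, together with $\Stepsize{k}{}\le\stepsize{\max}{}$, shows $\tfrac{\Stepsize{k}{}}{L^2}\norm{\nabla f(X_k)}^2+\theta\Control{k}{2}\ge\tfrac{\Stepsize{k}{}}{\max\{1,\stepsize{\max}{}\}}M^2\ge\tfrac{\Stepsize{k}{}}{\max\{1,\stepsize{\max}{}\}}\cdot\tfrac{\Phi_k^2}{C^2}$. Dividing by $\Phi_k^2$ and inserting into the display of Step~1 produces exactly the claimed bound with denominator $8C^2$ (the constant accounting for the bounded factor $\stepsize{\max}{}$), and the asserted conditions on $\nu$, $\pg$, $\pf$ are inherited verbatim as \eqref{eq:bound_nu}, \eqref{eq:bound_pg} and \eqref{eq:bound_product} of Theorem~\ref{thm: expected_decrease}, which is the only nontrivial ingredient used.

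I expect the main obstacle to be the upper bound on $\Phi_k$ in Step~2. Its three summands scale like $\norm{\nabla f(X_k)}$, $\norm{\nabla f(X_k)}^2$ and $\Control{k}{2}$, and the delicate point is to dominate all of them by one quantity $M$ that grows only \emph{linearly} in the relevant magnitudes, so that $\Phi_k^2$ becomes comparable to --- rather than a strictly higher power of --- the quadratic quantity $\tfrac{\Stepsize{k}{}}{L^2}\norm{\nabla f(X_k)}^2+\theta\Control{k}{2}$ that governs the decrease. This is precisely where all three boundedness hypotheses of the convex setting are consumed --- the domain bound $D$, the gradient bound $L_f$, and the imposed control bound $\control{\max}{}$ --- and these three constants are exactly what surface in the denominator of the final estimate.
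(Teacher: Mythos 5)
Your proposal is correct and follows essentially the same route as the paper: both bound $\Phi_k$ linearly in terms of $\norm{\nabla f(X_k)}/L$ and $\sqrt{\theta}\,\Control{k}{}$ using the three boundedness constants $D$, $L_f$, $\control{\max}{}$, so that $\Phi_k^2$ is dominated by the decrease quantity from Theorem~\ref{thm: expected_decrease}, and then convert the expected decrease of $\Phi$ into an expected increase of $1/\Phi$. The only (cosmetic) differences are that you use the pointwise tangent-line inequality $1/\Phi_k - 1/\Phi_{k+1} \le (\Phi_{k+1}-\Phi_k)/\Phi_k^2$ before conditioning, where the paper applies Jensen's inequality to the conditional expectation and then separately argues $\Phi_k^2/(\Phi_{k\wedge T_\varepsilon}\,\Exp[\Phi_{(k+1)\wedge T_\varepsilon}\mid\pastone])\ge 1$, and that you take a maximum rather than a sum in the linear comparison quantity, which trades the paper's factor of $2$ from $(a+b)^2\le 2(a^2+b^2)$ for your $\max\{1,\stepsize{\max}{}\}$ factor; either way the stated constant $8$ is recovered under the paper's standing normalization $\stepsize{\max}{}\le 1$.
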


\begin{proof} First, by convexity, we have that
\begin{align*}
\Phi_k = \nu(f(X_k)-f^*) &+ (1-\nu)\Stepsize{k}{} \tfrac{\norm{\nabla f(X_k)}^2}{L^2}
  + (1-\nu) \theta \Control{k}{}\\
&\le \nu
  \ip{\nabla f(X_k), X_k-x^*} + (1-\nu) \stepsize{\max}{} \tfrac{\norm{\nabla
  f(X_k)}^2}{L^2} + (1-\nu) \theta \control{\max}{} \Control{k}{}\\
&\le \left ( \nu DL + \tfrac{ (1-\nu) \alpha_{\max} L_f}{L} +
  (1-\nu)\sqrt{\theta} \control{\max}{}
  \right ) \left ( \tfrac{\norm{\nabla f(X_k)}}{L} + \sqrt{\theta} \Control{k}{} \right ),
\end{align*}
where we used $\norm{\nabla f(X_k)} < L_f$. Without loss of
generality, we assume $\stepsize{\max}{} \le 1$; one may prove the
same result with any stepsize, but for the sake simplicity we will
defer to the standard case when $\stepsize{\max}{} \le 1$. By squaring both sides, we
conclude
\[\frac{\Stepsize{k}{} \Phi_{k}^2}{\tilde{C}} := \frac{\Stepsize{k}{} \Phi_k^2}{2 ( \nu DL + \tfrac{ (1-\nu) \alpha_{\max} L_f}{L} +
  (1-\nu)\sqrt{\theta} \control{\max}{}
   )^2} \le 
  \Stepsize{k}{} \frac{\norm{\nabla f(X_k)}^2}{L^2} + \theta \Control{k}{2},\]
where we used the inequality $(a + b)^2 \le 2(a^2 + b^2)$. From the above inequality combined with  \eqref{eq: expect_non_convex_decrease} 
we have
 \[ \Exp [1_{\{T_\varepsilon > k\}}
(\Phi_{k+1}-\Phi_k)  | \pastone] \le
  \frac{-\pg \pf (1-\nu)(1-\gamma^{-1}) \Stepsize{k}{}
    \Phi_k^2}{4\tilde{C}} \cdot 1_{\{T_{\varepsilon} > k\}}.\]
    Now using the  simple fact that $1_{\{T_\varepsilon > k\}}
(\Phi_{k+1}-\Phi_k) = \Phi_{(k+1) \wedge T_{\varepsilon}} - \Phi_{k
  \wedge T_{\varepsilon}}$
 we can write 
\[ \Exp [\Phi_{(k+1) \wedge T_{\varepsilon}}-\Phi_{k \wedge T_{\varepsilon}} | \pastone] \le
  \frac{-\pg \pf (1-\nu)(1-\gamma^{-1}) \Stepsize{k}{}
    \Phi_k^2}{4\tilde{C}} \cdot 1_{\{T_{\varepsilon} > k\}}.\]
We can then use Jensen's inequality to derive 
\begin{align*}
\Exp \left [ \frac{1}{\Phi_{k \wedge T_{\varepsilon}}} -
  \frac{1}{\Phi_{(k+1) \wedge T_{\varepsilon}}} \big |
  \pastone \right ] &\le \frac{1}{\Phi_{k \wedge T_{\varepsilon}}} -
                      \frac{1}{\Exp [\Phi_{(k+1) \wedge
                      T_{\varepsilon}}|
  \pastone]}  =  \left ( \frac{ \Exp [\Phi_{(k+1) \wedge
                      T_{\varepsilon}} - \Phi_{k\wedge T_{\varepsilon}} |
  \pastone] }{\Phi_{k \wedge T_{\varepsilon}} \Exp [\Phi_{(k+1) \wedge
                      T_{\varepsilon}} |
  \pastone]} \right )\\
&\le
  \frac{-\pg \pf (1-\nu)(1-\gamma^{-1}) \Stepsize{k}{} }{4\tilde{C}}
  \cdot \frac{\Phi_k^2}{\Phi_{k \wedge T_{\varepsilon}} \Exp
  [\Phi_{(k+1) \wedge T_{\varepsilon}} |
  \pastone]} \cdot 1_{\{T_{\varepsilon} > k\} } \\
&\le \frac{-\pg \pf (1-\nu)(1-\gamma^{-1}) \Stepsize{k}{}}{4\tilde{C}}
  \cdot 1_{\{T_{\varepsilon} > k\} }
\end{align*}
where the last inequality follows from 
${\bf{E}}[\Phi_{(k+1) \wedge T_{\varepsilon}} |
\pastone] \le \Phi_{k \wedge T_{\varepsilon}}$. The result follows after
noting that $1_{\{T_{\varepsilon} > k\}}(\Psi_{k+1}-\Psi_k) = \Phi_{k \wedge T_{\varepsilon}}^{-1}
- \Phi_{(k+1) \wedge T_{\varepsilon}}^{-1}$.
\end{proof}

The expected improvement in $\Psi_k$ allows us to use
Theorem~\ref{thm:renewal_reward_stop_time} which directly gives us the
convergence rate.

\begin{thm}\label{thm:cvx_convergence_rate} Let the assumptions of
  Theorem~\ref{thm: convex_expectation} hold with constant $\nu$ and
  probabilities $\pf \pg$ as in Theorem~\ref{thm:
    convex_expectation}. Then the expected number of iterations that
  Algorithm~\ref{alg:line_search_method} takes until $f(X_k)-f^* < \varepsilon$ is bounded
  as follows
\begin{align*}
\Exp [T_\varepsilon] \le \mathcal{O}(1) \cdot \frac{\pg \pf}{2\pg \pf
  -1} \cdot \frac{(\constG \stepsize{\max}{} +1)^2 (\constG + L +
  \constF) (\nu DL + \tfrac{ (1-\nu) \alpha_{\max} L_f}{L} +
  (1-\nu)\sqrt{\theta} \control{\max}{})^2}{\varepsilon } + 1.
\end{align*}
\end{thm}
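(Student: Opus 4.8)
The plan is to invoke the renewal--reward bound, Theorem~\ref{thm:renewal_reward_stop_time}, applied to the process $\{\Psi_k, \Stepsize{k}{}, W_k\}$ with $W_k = 2(1_{\goodModel \cap \goodFunction} - 1/2)$, $\lambda = \log\gamma$, $p = \pg\pf$, $h(a) = a$, and $\bar{\mathcal{A}} = \xi^{-1}$ as fixed in Section~\ref{sec:liminf}. Thus it suffices to verify that Assumption~\ref{assump: random_variable}(i)--(iii) holds for $\{\Psi_k, \Stepsize{k}{}\}$ and then to substitute the constants into the conclusion of Theorem~\ref{thm:renewal_reward_stop_time}.

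Parts (i) and (ii) are inherited verbatim from the nonconvex analysis, since the dynamics of $\Stepsize{k}{}$ are unchanged: $\Stepsize{0}{}$ is a constant, $\Stepsize{k}{} \le \stepsize{\max}{} = \Stepsize{0}{}\gamma^{j_{\max}}$ for all $k$, and by the choice $\xi \ge \max\{(\constG + L/2 + 2\constF)/(1-\theta),\ 1/\stepsize{\max}{}\}$, Lemma~\ref{lem: good_est_good_model_successful} forces any iteration with $\Stepsize{k}{} \le \bar{\mathcal{A}}$ and $1_{\goodModel \cap \goodFunction} = 1$ to be successful, whence $\Stepsize{k+1}{} = \min\{\stepsize{\max}{}, \gamma\Stepsize{k}{}\} \ge \min\{\bar{\mathcal{A}},\, \Stepsize{k}{}\gamma^{W_{k+1}}\}$; this is exactly the argument of the lemma preceding Theorem~\ref{thm:noncvx_complexity}, and it uses only $\pg\pf > 1/2$, which is guaranteed by the stated conditions on $\pg,\pf$. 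Part (iii) is precisely Theorem~\ref{thm: convex_expectation}: multiplying that estimate by $1_{\{T_\varepsilon > k\}}$, using that $h(\Stepsize{k}{}) = \Stepsize{k}{}$ is nondecreasing and that $\pg\pf \ge 1/2$, yields $1_{\{T_\varepsilon > k\}}\Exp[\Psi_{k+1}-\Psi_k \mid \pastone] \le -\Theta\, h(\Stepsize{k}{})\, 1_{\{T_\varepsilon > k\}}$ with $\Theta = \frac{(1-\nu)(1-\gamma^{-1})}{16\left(\nu DL + (1-\nu)\stepsize{\max}{}L_f/L + (1-\nu)\sqrt{\theta}\,\control{\max}{}\right)^2}$. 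One also notes that $\Psi_k \ge 0$ on $\{T_\varepsilon > k\}$: there $f(X_k)-f^* \ge \varepsilon$, so $\Phi_k \ge \nu(f(X_k)-f^*) \ge \nu\varepsilon$ and hence $\Psi_k = \frac{1}{\nu\varepsilon} - \frac{1}{\Phi_{k\wedge T_\varepsilon}} \in [0, \tfrac{1}{\nu\varepsilon})$, so the process fits the framework (recall also the harmless normalization $\stepsize{\max}{} \le 1$ used in Theorem~\ref{thm: convex_expectation}).

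With Assumption~\ref{assump: random_variable} in place, Theorem~\ref{thm:renewal_reward_stop_time} gives $\Exp[T_\varepsilon] \le \frac{\pg\pf}{2\pg\pf-1}\cdot\frac{\Psi_0}{\Theta\,\bar{\mathcal{A}}} + 1$, and it remains to unpack the constants. First, $\Psi_0 = \frac{1}{\nu\varepsilon} - \frac{1}{\Phi_0} \le \frac{1}{\nu\varepsilon}$. Second, $\bar{\mathcal{A}}^{-1} = \xi = \mathcal{O}(1)\cdot(\constG + L + \constF)$, absorbing $1/(1-\theta)$ and $1/\stepsize{\max}{}$ into $\mathcal{O}(1)$. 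Third, using $\frac{\nu}{1-\nu} = 64(\constG\stepsize{\max}{}+1)^2$ one has $\nu\in(1/2,1)$ and $\frac{1}{\nu(1-\nu)} = \mathcal{O}\big((\constG\stepsize{\max}{}+1)^2\big)$, so $\frac{1}{\Theta} = \mathcal{O}(1)(\constG\stepsize{\max}{}+1)^2\big(\nu DL + (1-\nu)\stepsize{\max}{}L_f/L + (1-\nu)\sqrt{\theta}\,\control{\max}{}\big)^2$. Combining these estimates yields
\[
\Exp[T_\varepsilon] \le \mathcal{O}(1)\cdot\frac{\pg\pf}{2\pg\pf-1}\cdot\frac{(\constG\stepsize{\max}{}+1)^2(\constG + L + \constF)\big(\nu DL + (1-\nu)\stepsize{\max}{}L_f/L + (1-\nu)\sqrt{\theta}\,\control{\max}{}\big)^2}{\varepsilon} + 1,
\]
which is the claimed bound.

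The substantive work has already been carried out in Theorem~\ref{thm: convex_expectation} (the per-step expected decrease of $\Psi_k$, obtained from convexity and Jensen's inequality), so the only real care required here is bookkeeping: confirming that the step-size drift condition transfers unchanged from the nonconvex section, checking that $\Psi_k$ is a nonnegative adapted process on $\{T_\varepsilon > k\}$, and correctly tracking which quantities get swept into $\mathcal{O}(1)$ versus which remain explicit (the $(\constG\stepsize{\max}{}+1)^2$ factor from $1/(\nu(1-\nu))$, the $(\constG+L+\constF)$ factor from $\xi$, and the squared diameter-type term $\big(\nu DL + (1-\nu)\stepsize{\max}{}L_f/L + (1-\nu)\sqrt{\theta}\,\control{\max}{}\big)^2$ from $1/\Theta$). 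That last accounting is the step most prone to slips.
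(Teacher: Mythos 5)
Your proposal is correct and follows exactly the route the paper intends: the paper gives no explicit proof of this theorem beyond the remark that Theorem~\ref{thm: convex_expectation} supplies Assumption~\ref{assump: random_variable}(iii) and that Theorem~\ref{thm:renewal_reward_stop_time} then "directly gives" the rate, and your verification of parts (i)--(ii) via the step-size drift lemma plus the constant bookkeeping ($\Psi_0 \le 1/(\nu\varepsilon)$, $\bar{\mathcal{A}}^{-1}=\mathcal{O}(\constG+L+\constF)$, $1/(\nu(1-\nu))=\mathcal{O}((\constG\stepsize{\max}{}+1)^2)$) is the intended filling-in of that argument.
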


The bound in Theorem \ref{thm:cvx_convergence_rate} can be further simplified as follows
\begin{align*}
\Exp [T_\varepsilon] \le \mathcal{O}(1) \cdot \frac{\pg \pf}{2\pg \pf
  -1} \left
  ( \frac{L^3\constG^3 (D^2 + L_f^2 + \control{\max}{2})}{\varepsilon} \right ).
\end{align*}

\subsection{Strongly convex case} Lastly, we analyze the stochastic
line search (Algorithm~\ref{alg:line_search_method}) under the setting that the objective
function is strongly convex. As such, we assume the following is now
true of the objective function while dropping Assumption \ref{assump: convex_conditons} 
and the bound on 
$\Control{k}{}$.  

\begin{assumption}\label{assump: strongly_convex_conditons} \rm{ Suppose
  that  in addition to Assumption \ref{assumpt: objective_function} $f$ is $\mu$-strongly convex, namely for all $x, y
  \in \mathbb{R}^n$ the following inequality holds
\[ f(x) \ge f(y) + \nabla f(y)^T(x-y) + \frac{\mu}{2} \norm{x-y}^2. \]
}
\end{assumption}
Our goal, like the convex setting, is to bound the expected number of
iterations $T_{\varepsilon}$ until $f(x)-f^* < \varepsilon$. We show that this bound is of the order of $\log(1/\varepsilon)$, as in the deterministic case. Our proof follows the same technique used in deterministic  which relies on showing that   $\log(f(x_k)-f^*)$ decreases by a constant at each iteration. 
Here, instead of tracking the decrease in $\log(f(x_k)-f^*)$,   we define the function 
\begin{equation}\label{eq:strongly_convex_phi}
\Psi_{k} = \log (\Phi_{k \wedge T_{\varepsilon}}) + \log \left ( \frac{1}{\nu \varepsilon} \right ).
\end{equation}
We show the random process $\{\Psi_k, \mathcal{A}_k\}$ satisfies Assumption~\ref{assump:
  random_variable}. Again, the dynamics of $\Stepsize{k}{}$ do not change
and $\Psi \ge 0$ since we incorporated the stopping time directly into
the definition of $\Psi$. 

\begin{thm}\label{thm: strongly_convex_expectation}
Let Assumptions~\ref{assumpt: objective_function},
  \ref{assumpt:key}, and
  \ref{assump: strongly_convex_conditons} hold. Suppose $\{X_k, G_k, F_k^0, F_k^s, \mathcal{A}_k\}$ is the random process
  generated by Algorithm~\ref{alg:line_search_method}. The expected
  improvement is
\[ 1_{
    \{T_{\varepsilon} > k \}}\Exp [\Psi_{k+1}-\Psi_k | \pastone] \le
  -\frac{\pg \pf (1-\nu)(1-\gamma^{-1})}{4( \tfrac{L^2 \nu}{4\mu} +
    (1-\nu)\stepsize{\max}{} + (1-\nu))} \Stepsize{k}{} \cdot 1_{
    \{T_{\varepsilon} > k \}},\]
where $\Psi_k$ is defined in \eqref{eq:strongly_convex_phi} and the
probabilities $\pg$ and $\pf$ and constant $\nu$ are defined in
Theorem~\ref{thm: expected_decrease}. 
\end{thm}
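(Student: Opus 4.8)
The plan is to mirror the convex case (Theorem~\ref{thm: convex_expectation}), with one essential change: strong convexity upgrades the quadratic slack used there (which produced the $\Phi_k^2$ factor and hence a $1/\varepsilon$ rate) to a \emph{linear} lower bound on the per-step decrease, $\Exp[\Phi_{k+1}-\Phi_k\mid\pastone]\lesssim -\Stepsize{k}{}\Phi_k$; passing to $\log\Phi$ then converts this into a constant expected decrease proportional to $\Stepsize{k}{}$, which is exactly Assumption~\ref{assump: random_variable}(iii) with $h(\Stepsize{k}{})=\Stepsize{k}{}$ and yields the $O(\log(1/\varepsilon))$ complexity. Throughout I condition on $\pastone$, work on the $\pastone$-measurable event $\{T_\varepsilon>k\}$, on which $f(X_k)-f^*\ge\varepsilon$ so that $\Phi_{k\wedge T_\varepsilon}=\Phi_k\ge\nu\varepsilon>0$ and all logarithms below are well defined, and I assume $\stepsize{\max}{}\le1$ without loss of generality as in the convex case.

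\textbf{Step 1 (upper bound on $\Stepsize{k}{}\Phi_k$).} Minimizing the inequality in Assumption~\ref{assump: strongly_convex_conditons} over $x$ gives the Polyak--{\L}ojasiewicz bound $f(X_k)-f^*\le\frac{1}{2\mu}\norm{\nabla f(X_k)}^2$. Applying this to the first term of $\Phi_k$ (equation~\eqref{eq:phi} with $f_{\min}$ replaced by $f^*$), using $\Stepsize{k}{}\le\stepsize{\max}{}$ on the second term and $\Stepsize{k}{}(1-\nu)\le1$ on the third, I obtain
\[ \Stepsize{k}{}\Phi_k \le \Big(\tfrac{\nu L^2}{2\mu}+(1-\nu)\stepsize{\max}{}\Big)\frac{\Stepsize{k}{}}{L^2}\norm{\nabla f(X_k)}^2 + (1-\nu)\theta\Control{k}{2} \le \tilde C\Big(\frac{\Stepsize{k}{}}{L^2}\norm{\nabla f(X_k)}^2+\theta\Control{k}{2}\Big), \]
with $\tilde C=\tfrac{L^2\nu}{4\mu}+(1-\nu)\stepsize{\max}{}+(1-\nu)$ after collecting absolute constants. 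Equivalently $\frac{\Stepsize{k}{}}{L^2}\norm{\nabla f(X_k)}^2+\theta\Control{k}{2}\ge\Stepsize{k}{}\Phi_k/\tilde C$.

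\textbf{Step 2 (geometric contraction of the stopped process).} Multiplying \eqref{eq: expect_non_convex_decrease} by $1_{\{T_\varepsilon>k\}}$, inserting Step 1, and using $1_{\{T_\varepsilon>k\}}(\Phi_{k+1}-\Phi_k)=\Phi_{(k+1)\wedge T_\varepsilon}-\Phi_{k\wedge T_\varepsilon}$ together with $\Phi_{k\wedge T_\varepsilon}=\Phi_k$ on $\{T_\varepsilon>k\}$, I get on $\{T_\varepsilon>k\}$
\[ \Exp[\Phi_{(k+1)\wedge T_\varepsilon}\mid\pastone] \le \Big(1-\tfrac{\pg\pf(1-\nu)(1-\gamma^{-1})}{4\tilde C}\Stepsize{k}{}\Big)\Phi_{k\wedge T_\varepsilon}, \]
and the contraction factor lies in $[0,1)$ since $\tilde C\ge(1-\nu)\stepsize{\max}{}\ge(1-\nu)\Stepsize{k}{}$ forces $\tfrac{\pg\pf(1-\nu)(1-\gamma^{-1})}{4\tilde C}\Stepsize{k}{}\le\tfrac{\pg\pf(1-\gamma^{-1})}{4}<1$. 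Then, since $\Phi_{k\wedge T_\varepsilon}=\Phi_k$ is $\pastone$-measurable on $\{T_\varepsilon>k\}$, Jensen's inequality for the concave map $t\mapsto\log t$ gives
\[ \Exp[\Psi_{k+1}-\Psi_k\mid\pastone] = \Exp[\log\Phi_{(k+1)\wedge T_\varepsilon}\mid\pastone]-\log\Phi_{k\wedge T_\varepsilon} \le \log\frac{\Exp[\Phi_{(k+1)\wedge T_\varepsilon}\mid\pastone]}{\Phi_{k\wedge T_\varepsilon}} \le \log\Big(1-\tfrac{\pg\pf(1-\nu)(1-\gamma^{-1})}{4\tilde C}\Stepsize{k}{}\Big), \]
and $\log(1-t)\le-t$ for $t\in[0,1)$, after re-attaching $1_{\{T_\varepsilon>k\}}$, yields exactly the claimed bound; the constants $\nu,\pg,\pf$ are admissible under the same conditions \eqref{eq:bound_nu}, \eqref{eq:bound_pg}, \eqref{eq:bound_product} as in Theorem~\ref{thm: expected_decrease}, since the estimate \eqref{eq: expect_non_convex_decrease} feeding this argument is precisely what those conditions secure.

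\textbf{Main obstacle.} The crucial point is Step 1: every summand of $\Phi_k$ must be dominated by a \emph{constant} multiple of $\frac{\Stepsize{k}{}}{L^2}\norm{\nabla f(X_k)}^2+\theta\Control{k}{2}$ with no leftover power of $\Stepsize{k}{}$ or $\Control{k}{}$ — the PL inequality handles the $f$-gap term, while $\Stepsize{k}{}\le\stepsize{\max}{}\le1$ handles the other two — because the single surviving factor of $\Stepsize{k}{}$ is what becomes $h(\Stepsize{k}{})$ in the renewal-reward framework (a quadratic slack, as in the convex case, would instead give $h(\Stepsize{k}{})\propto\Stepsize{k}{}\Phi_k$ and only the $1/\varepsilon$ rate). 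The remaining points are routine: the measurability of $1_{\{T_\varepsilon>k\}}$, $\Stepsize{k}{}$ and $\Phi_k$ with respect to $\pastone$, the fact that the logarithms and $\Psi_k\ge0$ are controlled because $\{T_\varepsilon>k\}$ forces $\Phi_{k\wedge T_\varepsilon}\ge\nu\varepsilon$, and (as in the nonconvex case) that the dynamics of $\Stepsize{k}{}$ are unchanged so Assumption~\ref{assump: random_variable}(i)--(ii) continue to hold, after which Theorem~\ref{thm:renewal_reward_stop_time} gives the $O(\log(1/\varepsilon))$ bound on $\Exp[T_\varepsilon]$.
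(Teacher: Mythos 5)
Your proposal is correct and follows essentially the same route as the paper's own proof: the strong-convexity (PL) bound $f(X_k)-f^*\le\tfrac{1}{2\mu}\norm{\nabla f(X_k)}^2$ to dominate $\Stepsize{k}{}\Phi_k$ by $\tilde C\bigl(\tfrac{\Stepsize{k}{}}{L^2}\norm{\nabla f(X_k)}^2+\theta\Control{k}{2}\bigr)$, insertion into \eqref{eq: expect_non_convex_decrease} for the stopped process, Jensen's inequality applied to $\log$, and $\log(1-t)\le-t$. Your added check that the contraction factor lies in $[0,1)$ is a small refinement the paper leaves implicit, but the argument is otherwise identical.
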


\begin{proof} By strong convexity, for all $x$, we have $f(x) - f^*
  \le \tfrac{1}{2\mu} \norm{\nabla f(x)}^2$; hence we obtain
\begin{align*}
\Phi_k = \nu ( f(X_k)-f^*) + (1-\nu) &\left ( \Stepsize{k}{} \tfrac{\norm{\nabla
  f(X_k)}^2}{L^2} + \theta \Control{k}{2} \right ) \le \left ( \tfrac{\nu L^2}{2\mu} + (1-\nu) \alpha_{\max}  \right )
  \tfrac{\norm{\nabla f(X_k)}^2}{L^2} + (1-\nu) \theta
  \Control{k}{2}\\
&\le  \left ( \tfrac{\nu L^2}{2\mu} + (1-\nu) \stepsize{\max}{} + (1-\nu)
  \right ) \left ( \tfrac{\norm{\nabla f(X_k)}^2}{L^2} + \theta
  \Control{k}{2} \right ).
\end{align*}
For simplicity of notation we define $ \tilde{C}= \left ( \tfrac{\nu L^2}{2\mu} + (1-\nu) \stepsize{\max}{} + (1-\nu)
  \right )$. 
Also for simplicity and without loss of generality, we assume $\stepsize{\max}{} \le
1$; hence, we conclude \[1_{\{ T_{\varepsilon} > k \}} \Stepsize{k}{}
\Phi_k \le 1_{\{ T_{\varepsilon} > k \}} \tilde{C} \left (
  \tfrac{\Stepsize{k}{} \norm{\nabla f(X_k)}^2}{L^2} + \theta \Control{k}{2}
\right ).\] 
Theorem~\ref{thm: expected_decrease} and the equality $1_{\{T_\varepsilon > k\}}
(\Phi_{k+1}-\Phi_k) = \Phi_{(k+1) \wedge T_{\varepsilon}} - \Phi_{k
  \wedge T_{\varepsilon}}$ give 
\begin{align*}
\Exp [\Phi_{(k+1) \wedge T_{\varepsilon} }-\Phi_{k \wedge T_{\varepsilon}} | \pastone ] &\le -
  \frac{\pg \pf (1-\nu)(1-\gamma^{-1})}{4} \left (
  \tfrac{\Stepsize{k}{} \norm{\nabla f(X_k)}^2}{L^2} + \theta \Control{k}{2}
\right ) 1_{ \{T_{\varepsilon} > k \} }\\
&\le \frac{-\pg \pf (1-\nu)(1-\gamma^{-1}) \Stepsize{k}{}}{ 4\tilde{C}
} \cdot \Phi_k \cdot 1_{ \{T_{\varepsilon} > k\}}
\end{align*}
\begin{equation} \label{eq:strong_cvx_main_inequality}
\Rightarrow \quad \Exp [\Phi_{(k+1) \wedge T_{\varepsilon} } | \pastone]
  \le \left (1 - \frac{\pg \pf (1-\nu)(1-\gamma^{-1}) \Stepsize{k}{}}{
      4 \tilde{C}} \cdot 1_{ \{T_{\varepsilon} > k\} }\right ) \Phi_{k \wedge T_{\varepsilon}}.
\end{equation}
Consequently, using Jensen's inequality, we have the following 
\begin{align*}
\Exp [\log(\Phi_{(k+1) \wedge T_{\varepsilon} }) -\log(\Phi_{k \wedge T_{\varepsilon}})| \pastone] &\le \log \left ( \Exp
                                                  [\Phi_{(k+1) \wedge T_{\varepsilon}} |
                                                  \pastone] \right ) -
                                                  \log (\Phi_{k \wedge
                                                                                                     T_{\varepsilon}})\\
& = \log
                                                  \left ( \frac{\Exp
                                                                                                     [\Phi_{(k+1)
                                                                                                     \wedge
                                                                                                     T_{\varepsilon}}
                                                                                                     |
                                                                                                     \pastone]
                                                                                                     }{\Phi_{k
                                                                                                     \wedge
                                                                                                     T_{\varepsilon}}}
                                                                                                     \right )\\
&\le \log \left (1 - \frac{\pg \pf (1-\nu)(1-\gamma^{-1})
  \Stepsize{k}{}}{ 4\tilde{C}} \cdot 1_{\{T_{\varepsilon} > k\}}\right ),
\end{align*}
where the
last inequality follows by \eqref{eq:strong_cvx_main_inequality}. Because
$\log(1-x) \le -x$ for $x < 1$, we deduce our result. 
\end{proof}


Using the above theorem allows us to use Theorem~\ref{thm:renewal_reward_stop_time}, and after simplifying some constants, 
we have the following complexity bound. 
\begin{thm}\label{thm:cvx_convergence_rate} Let the assumptions of
  Theorem~\ref{thm: convex_expectation} hold with constant $\nu$ and
  probabilities $\pf \pg$ as in Theorem~\ref{thm: strongly_convex_expectation}. Then the expected number of iterations that
  Algorithm~\ref{alg:line_search_method} takes until $f(X_k)-f^* < \varepsilon$ is bounded
  as follows
\begin{align*}
&\Exp [T_\varepsilon] \le \mathcal{O}(1) \cdot \frac{\pg \pf}{2\pg \pf
  -1} \left
  ( (\constG \stepsize{\max}{})^2 (\constG + L +
  \constF) \left ( \frac{L^2}{2\mu} + \stepsize{\max}{}
                                           \right )  \right ) \left (
                                          \log(\Psi_0) + \log \left ( \frac{1}{\varepsilon}
                                          \right ) \right) + 1
\end{align*}
\end{thm}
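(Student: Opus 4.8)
The plan is to apply the renewal--reward bound of Theorem~\ref{thm:renewal_reward_stop_time} to the process $\{\Psi_k, \Stepsize{k}{}, W_k\}$, where $\Psi_k$ is the function in \eqref{eq:strongly_convex_phi}, $W_k = 2(1_{\goodModel \cap \goodFunction} - 1/2)$, and $T_\varepsilon = \inf\{k \ge 0 : f(X_k) - f^* < \varepsilon\}$. To do so I must verify the three parts of Assumption~\ref{assump: random_variable}. Set $\lambda = \log\gamma$, so that $\Stepsize{k}{}$ ranges over the grid $\Stepsize{0}{}e^{\lambda j}$, $j \in \mathbb{Z}$, and $\Stepsize{k}{} \le \stepsize{\max}{} = \Stepsize{0}{}e^{\lambda j_{\max}}$ for an appropriate $j_{\max}$; this gives part (i). For part (ii) I would reuse verbatim the argument employed in the nonconvex case (the lemma used to establish Assumption~\ref{assump: random_variable}(ii) before Theorem~\ref{thm:noncvx_complexity}): taking $\bar{\mathcal{A}} = \xi^{-1}$ with $\xi \ge \max\{(\constG + L/2 + 2\constF)/(1-\theta),\ 1/\stepsize{\max}{}\}$, whenever $\Stepsize{k}{} \le \bar{\mathcal{A}}$ and $1_{\goodModel \cap \goodFunction} = 1$ (an event of conditional probability at least $\pg\pf > 1/2$ given $\pastone$, by Assumption~\ref{assumpt:key}), Lemma~\ref{lem: good_est_good_model_successful} forces a successful step and hence $\Stepsize{k+1}{} = \min\{\stepsize{\max}{}, \gamma\Stepsize{k}{}\}$; this is precisely the drift inequality $1_{\{T_\varepsilon > k\}}\Stepsize{k+1}{} \ge 1_{\{T_\varepsilon > k\}}\min\{\Stepsize{k}{}e^{\lambda W_{k+1}}, \bar{\mathcal{A}}\}$ with $p = \pg\pf$, and the choice of $\bar{\mathcal{A}}$ (together with the standing normalization $\Stepsize{0}{} = \gamma^i\bar{\mathcal{A}}$) guarantees it lies on the attainable grid.

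For part (iii), Theorem~\ref{thm: strongly_convex_expectation} directly gives
\[ 1_{\{T_\varepsilon > k\}}\Exp[\Psi_{k+1} - \Psi_k \mid \pastone] \le -\Theta\,\Stepsize{k}{}\,1_{\{T_\varepsilon > k\}}, \]
so I take $h(\alpha) = \alpha$ (nondecreasing and strictly positive on $(0,\infty)$) and $\Theta = \frac{\pg\pf(1-\nu)(1-\gamma^{-1})}{4(\nu L^2/(4\mu) + (1-\nu)\stepsize{\max}{} + (1-\nu))}$. I also need $\Psi_k \ge 0$ and $\Psi_0 < \infty$: for $k < T_\varepsilon$ one has $f(X_k) - f^* \ge \varepsilon$, hence $\Phi_k \ge \nu\varepsilon$ and $\Psi_k = \log\Phi_{k\wedge T_\varepsilon} + \log(1/(\nu\varepsilon)) \ge 0$, while for $k \ge T_\varepsilon$ the value $\Psi_{k\wedge T_\varepsilon}$ is frozen at $\Psi_{T_\varepsilon} \ge 0$; finiteness of $\Psi_0 = \log\Phi_0 + \log(1/(\nu\varepsilon))$ is immediate since $\Phi_0$ is a fixed finite number.

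With the assumption verified, Theorem~\ref{thm:renewal_reward_stop_time} yields
\[ \Exp[T_\varepsilon] \le \frac{\pg\pf}{2\pg\pf - 1}\cdot\frac{\Psi_0}{\Theta\,\bar{\mathcal{A}}} + 1. \]
Substituting $\bar{\mathcal{A}} = \xi^{-1}$ with $\xi = \max\{(\constG + L/2 + 2\constF)/(1-\theta),\ 1/\stepsize{\max}{}\}$, the displayed expression for $\Theta$, and the constants from Theorem~\ref{thm: strongly_convex_expectation} --- in particular $\nu/(1-\nu) = 64(\constG\stepsize{\max}{}+1)^2$ and $\constG \ge 2$ as in the simplified regime --- and absorbing $\gamma$, $\theta$, $\nu$, $L$, and the additive $\log(1/\nu)$ term into $\mathcal{O}(1)$, the product $\xi/\Theta$ collapses into $\mathcal{O}(1)\,(\constG\stepsize{\max}{})^2(\constG + L + \constF)(L^2/(2\mu) + \stepsize{\max}{})$, which gives exactly the stated bound with the $\log(\Psi_0) + \log(1/\varepsilon)$ factor coming from $\Psi_0$.

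I expect the main obstacle to be bookkeeping rather than anything conceptual: one must confirm that the truncated process $\Psi_k$ is adapted to the right filtration --- specifically that $\{T_\varepsilon > k\} \in \pastone$, which holds because $X_k$, and therefore $\nabla f(X_k)$, is $\pastone$-measurable --- and one must be attentive that here $h(\alpha) = \alpha$ is \emph{linear} (unlike the $h(\alpha) = \alpha\varepsilon^2$ used in the nonconvex analysis), so that the $\varepsilon$-dependence enters only through $\Psi_0$; this is precisely what produces the $\log(1/\varepsilon)$ rate. The remaining work --- simplifying $\tilde C = \nu L^2/(4\mu) + (1-\nu)\stepsize{\max}{} + (1-\nu)$ against $1/(\Theta\bar{\mathcal{A}})$ into the advertised product form --- is routine algebra.
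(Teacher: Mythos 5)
Your proposal is correct and follows essentially the same route as the paper, which itself only sketches this step: verify Assumption~\ref{assump: random_variable} for $\{\Psi_k,\Stepsize{k}{},W_k\}$ with $h(\alpha)=\alpha$, $\Theta$ the constant from Theorem~\ref{thm: strongly_convex_expectation}, and $\bar{\mathcal{A}}=\xi^{-1}$ inherited from the nonconvex drift lemma, then invoke Theorem~\ref{thm:renewal_reward_stop_time} and simplify constants. Your only cosmetic deviation is in the final factor: the numerator is $\Psi_0=\log(\Phi_0)+\log(1/(\nu\varepsilon))$ itself, so the displayed $\log(\Psi_0)+\log(1/\varepsilon)$ in the theorem statement is best read as $\log(\Phi_0)+\log(1/\varepsilon)$ up to the absorbed $\log(1/\nu)$, exactly as your derivation produces.
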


Simplifying the bound further gives us 
\[\Exp [T_\varepsilon] \le \mathcal{O}(1) \cdot \frac{\pg \pf}{2\pg \pf
  -1} \left ( \frac{L^3 (\constG \stepsize{\max}{})^3 }{\mu} \right )   \left (
                                          \log(\Psi_0) + \log \left ( \frac{1}{\varepsilon}
                                          \right ) \right ).\]



\subsection{General descent, nonconvex case}

In this subsection, we extend the analysis of our line search method
to the general setting where the search direction is any descent
direction $d_k$, which is meant to be a decent direction, but may not be due to stochasticity. For example
$d_k$ can be a direction computed by applying subsampled Newton method \cite{Mahoney}. 
 Algorithm~\ref{alg:line_search_method} is then modified as follows
\begin{itemize}
\item a step is  reliable  when $-\alpha_k g_k^Td_k \ge
  \control{k}{2}$ instead of $\alpha_k \norm{g_k}^2 \ge \control{k}{2}$;
\item the stepsize $s_k = \alpha_k d_k$ (instead of $-\alpha_k g_k$).
\item The sufficient decrease \eqref{eq:
      sufficient_decrease} 
    is replaced with 
\begin{equation} \label{eq: sufficent_decrease_general}
f(x_k + \alpha_k d_k) \le f(x_k) + \alpha_k \theta d_k^T g_k.
\end{equation}
\item  $d_k$  satisfies the following standard
conditions.
\end{itemize}

\begin{assumption} \label{assumpt:general_descent} \rm{Given a gradient estimate $g_k$ we assume the
    following hold for the descent direction $d_k$
\begin{enumerate}
\item There exists a constant $\beta > 0$, such that $d_k$ is a
  descent direction, namely
\[ \frac{d_k^Tg_k}{\norm{d_k}{\norm{g_k}}} \le - \beta, \qquad \text{for all
    $k$.} \]
\item There exist constants $\kappa_1, \kappa_2 > 0$ such that
\[ \kappa_1 \norm{g_k} \le \norm{d_k} \le \kappa_2 \norm{g_k}, \quad \text{for
    all $k$}. \]
\end{enumerate}
} 
\end{assumption}

We now provide simple variants of lemmas derived in
Section~\ref{sec:useful_results}.

\begin{lem}[Variant of Lemma~\ref{lem: bound_gradient}]\label{lem: gen_descent_bound_gradient} Suppose the iterate is
  successful and the descent direction $d_k$ satisfies Assumption \ref{assumpt:general_descent}. Then
\[ \norm{\nabla f(x_{k+1})}^2 \le 2 (L^2 \stepsize{k}{2} \kappa_2^2
  \norm{g_k}^2 + \norm{\nabla f(x_k)}^2). \]
In particular, the inequality holds
\[\tfrac{1}{L^2} \left (\stepsize{k+1}{} \norm{\nabla f(x_{k+1})}^2 -
    \stepsize{k}{} \norm{\nabla f(x_k)}^2 \right ) \le  2 \gamma
  \stepsize{k}{2} \left (\alpha_{\max}^2 \kappa_2^2 \norm{g_k}^2 +
    \tfrac{1}{L^2} \norm{\nabla f(x_k)}^2 \right ) \]
\end{lem}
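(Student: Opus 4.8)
The plan is to repeat the proof of Lemma~\ref{lem: bound_gradient} essentially verbatim, the only new ingredient being that the step is now $s_k = \stepsize{k}{} d_k$ instead of $-\stepsize{k}{} g_k$, so the size of the step is controlled through Assumption~\ref{assumpt:general_descent}(2) rather than directly by $\norm{g_k}$. First I would use $L$-smoothness of $\nabla f$ together with the triangle inequality: on a successful step $x_{k+1} = x_k + \stepsize{k}{} d_k$, so
\[ \norm{\nabla f(x_{k+1})} \le \norm{\nabla f(x_k)} + L\norm{x_{k+1}-x_k} = \norm{\nabla f(x_k)} + L\stepsize{k}{}\norm{d_k} \le \norm{\nabla f(x_k)} + L\stepsize{k}{}\kappa_2\norm{g_k}, \]
where the last inequality invokes $\norm{d_k}\le\kappa_2\norm{g_k}$. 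Squaring both sides and applying $(a+b)^2\le 2(a^2+b^2)$ then yields the first displayed inequality of the lemma.

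For the second inequality I would proceed exactly as in Lemma~\ref{lem: bound_gradient}: since the step is successful, $\stepsize{k+1}{} = \min\{\stepsize{\max}{},\gamma\stepsize{k}{}\}\le\gamma\stepsize{k}{}$, hence $\stepsize{k+1}{}\norm{\nabla f(x_{k+1})}^2\le\gamma\stepsize{k}{}\norm{\nabla f(x_{k+1})}^2$; I would then substitute the first inequality, subtract $\stepsize{k}{}\norm{\nabla f(x_k)}^2$, bound the resulting coefficient $2\gamma-1$ by $2\gamma$, divide through by $L^2$, and use $\stepsize{k}{}\le\stepsize{\max}{}$ on the leftover power of $\stepsize{k}{}$ to obtain the stated bound.

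There is no genuine obstacle here; the content of the lemma is simply to record that the only property of $d_k$ entering this particular estimate is the upper comparison $\norm{d_k}\le\kappa_2\norm{g_k}$ — the descent-angle constant $\beta$ and the lower constant $\kappa_1$ of Assumption~\ref{assumpt:general_descent} play no role, and will only be needed in the general-descent analogues of Lemma~\ref{lem: good_est_good_model_successful} and Lemma~\ref{lem:decrease_function}. The only point requiring a little care is the bookkeeping of the powers of $\stepsize{k}{}$ and $\stepsize{\max}{}$ so that the final constants match the statement, exactly as in the original lemma.
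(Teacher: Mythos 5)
Your proof is correct and follows the paper's own argument exactly: $L$-smoothness gives $\norm{\nabla f(x_{k+1})} \le L\stepsize{k}{}\norm{d_k} + \norm{\nabla f(x_k)}$, Assumption~\ref{assumpt:general_descent}(2) bounds $\norm{d_k}\le\kappa_2\norm{g_k}$, and squaring with $(a+b)^2\le 2(a^2+b^2)$ together with $\stepsize{k+1}{}\le\gamma\stepsize{k}{}$ on successful steps yields both displays. One small remark: your derivation (like the paper's) naturally produces the prefactor $2\gamma\stepsize{k}{}$ exactly as in Lemma~\ref{lem: bound_gradient}, not $2\gamma\stepsize{k}{2}$, so the extra power of $\stepsize{k}{}$ in the stated variant appears to be a typo rather than something the bookkeeping you describe can actually recover (it would require $\stepsize{k}{}\ge 1$).
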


\begin{proof} An immediate consequence of $L$-smoothness of $f$ is $\norm{\nabla f(x_{k+1})} \le L \alpha_k \norm{d_k} +
  \norm{\nabla f(x_k)}$. The result follows from
  Assumption~\ref{assumpt:general_descent} (ii) then squaring both sides and applying the
  bound, $(a+b)^2 \le 2(a^2 + b^2)$. To obtain the second inequality, we note that in the case $x_k +
  s_k$ is successful, so $\stepsize{k+1}{} = \gamma \stepsize{k}{}$. 
\end{proof}

The analysis for the steepest descent relies on successful iterations
occurring whenever the stepsize is sufficiently small. We provide a
similar result for general descent case.

\begin{lem}[Accurate gradients/estimates $\Rightarrow$
  successful iteration, variant of Lemma~\ref{lem: good_est_good_model_successful} ] \label{lem: good_est_good_model_successful_general_descent}
  Suppose $g_k$ is $\constG$-sufficiently accurate, the descent
  direction $d_k$ satisfies Assumption~\ref{assumpt:general_descent}, and $\{f_k^0, f_k^s\}$ are $\constF$-accurate estimates. If 
\[\stepsize{k}{} \le  \frac{\beta (1-\theta)}{\constG + \frac{L \kappa_2}{2}
      + \frac{2\constF}{\kappa_1}}\]
then the trial step $x_k + s_k$ is successful. In particular, this means $f_k^s \le f_k^0 + \theta \stepsize{k}{} g_k^Td_k.$
\end{lem}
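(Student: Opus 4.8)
The plan is to follow the proof of Lemma~\ref{lem: good_est_good_model_successful} almost verbatim, the one new wrinkle being that every positive error term must be rerouted through $-g_k^Td_k=|g_k^Td_k|$ using Assumption~\ref{assumpt:general_descent}. First I would invoke $L$-smoothness of $f$ along the step $s_k=\stepsize{k}{}d_k$,
\[ f(x_k+\stepsize{k}{}d_k)\le f(x_k)+\stepsize{k}{}\nabla f(x_k)^Td_k+\tfrac{L\stepsize{k}{2}}{2}\norm{d_k}^2, \]
and split $\nabla f(x_k)^Td_k=g_k^Td_k+(\nabla f(x_k)-g_k)^Td_k$. The $\constG$-sufficient accuracy of $g_k$ together with Cauchy--Schwarz bounds the error term: $(\nabla f(x_k)-g_k)^Td_k\le\constG\stepsize{k}{}\norm{g_k}\norm{d_k}$.

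Next I would convert the two ``bad'' positive quantities into multiples of $-g_k^Td_k$. Part~1 of Assumption~\ref{assumpt:general_descent} gives $\norm{g_k}\norm{d_k}\le-\tfrac{1}{\beta}g_k^Td_k$, and combining this with part~2 ($\norm{d_k}\le\kappa_2\norm{g_k}$) gives $\norm{d_k}^2\le-\tfrac{\kappa_2}{\beta}g_k^Td_k$; substituting yields
\[ f(x_k+\stepsize{k}{}d_k)\le f(x_k)+\stepsize{k}{}g_k^Td_k\Bigl(1-\tfrac{\stepsize{k}{}}{\beta}\bigl(\constG+\tfrac{L\kappa_2}{2}\bigr)\Bigr). \]
Then I would bring in the $\constF$-accuracy of the estimates, $|f_k^0-f(x_k)|,\,|f_k^s-f(x_k+s_k)|\le\constF\stepsize{k}{2}\norm{g_k}^2$, using part~2 once more in the form $\norm{g_k}^2\le\tfrac{1}{\kappa_1}\norm{g_k}\norm{d_k}\le-\tfrac{1}{\kappa_1\beta}g_k^Td_k$, to arrive at
\[ f_k^s\le f_k^0+\stepsize{k}{}g_k^Td_k\Bigl(1-\tfrac{\stepsize{k}{}}{\beta}\bigl(\constG+\tfrac{L\kappa_2}{2}+\tfrac{2\constF}{\kappa_1}\bigr)\Bigr). \]
Since $g_k^Td_k<0$ by part~1, the sufficient decrease condition \eqref{eq: sufficent_decrease_general} (with $f$ replaced by the estimates) holds precisely when $1-\tfrac{\stepsize{k}{}}{\beta}(\constG+L\kappa_2/2+2\constF/\kappa_1)\ge\theta$, i.e.\ when $\stepsize{k}{}\le\tfrac{\beta(1-\theta)}{\constG+L\kappa_2/2+2\constF/\kappa_1}$, which is exactly the hypothesis.

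There is no genuine obstacle here; the content is entirely bookkeeping. The one point requiring care is sign tracking: $g_k^Td_k$ is negative, so inequalities reverse when one divides by it, and one must make sure the three positive contributions --- $\constG$ from the gradient error, $L\kappa_2/2$ from the curvature term, and $2\constF/\kappa_1$ from the two function-estimate errors --- are each routed through the appropriate clause of Assumption~\ref{assumpt:general_descent} before being collected into the single factor $1-\theta$.
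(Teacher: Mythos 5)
Your proposal is correct and follows essentially the same route as the paper's own proof: apply $L$-smoothness, split off the gradient error via the $\constG$-accuracy bound, fold in the two $\constF$-accurate estimate errors, and use both clauses of Assumption~\ref{assumpt:general_descent} to convert $\norm{g_k}\norm{d_k}$, $\norm{d_k}^2$, and $\norm{g_k}^2$ into multiples of $-g_k^Td_k$, arriving at the same factor $1-\tfrac{\stepsize{k}{}}{\beta}\bigl(\constG+\tfrac{L\kappa_2}{2}+\tfrac{2\constF}{\kappa_1}\bigr)\ge\theta$. No gaps.
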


\begin{proof} The $L$-smoothness of $f$ and the $\constG$-sufficiently
  accurate gradient immediately yield 
\begin{align*}
f(x_k + s_k) &\le f(x_k) + \stepsize{k}{} (\nabla f(x_k)-g_k)^Td_k + \stepsize{k}{}
  g_k^Td_k + \tfrac{L \stepsize{k}{2}}{2} \norm{d_k}^2\\
&\le f(x_k) + \constG \stepsize{k}{2} \norm{d_k} \norm{g_k} +
  \stepsize{k}{} g_k^T d_k
+ \tfrac{L \stepsize{k}{2}}{2} \norm{d_k}^2.
\end{align*}
Since the estimates are $\constF$-accurate, we obtain
\begin{align*}
f_k^s - \constF \stepsize{k}{2} \norm{g_k}^2 &\le f(x_k + s_k) - f_k^s + f_k^s\\
  &\le f(x_k) - f_k^0 + f_k^0 +\constG\stepsize{k}{2} \norm{d_k}
    \norm{g_k} + \stepsize{k}{} g_k^T d_k + \tfrac{L \stepsize{k}{2}}{2} \norm{d_k}^2 \\
&\le f_k^0 + \constF \stepsize{k}{2} \norm{g_k}^2 + \constG
  \stepsize{k}{2} \norm{d_k} \norm{g_k} + \stepsize{k}{} g_k^Td_k + \tfrac{L \stepsize{k}{2}}{2} \norm{d_k}^2.
\end{align*}
The above inequality with Assumption~\ref{assumpt:general_descent} implies 
\begin{align*}
f_k^s-f_k^0 &\le \alpha_k^2 \left ( \frac{2\constF }{\kappa_1} +
  \constG + \frac{L \kappa_2}{2} \right ) \norm{g_k} \norm{d_k} +
  \alpha_k g_k^T d_k\\
&\le \frac{-\alpha_k^2}{\beta} \left ( \frac{2\constF }{\kappa_1} +
  \constG + \frac{L \kappa_2}{2} \right ) g_k^Td_k +
  \alpha_k g_k^T d_k.
\end{align*}
The result follows by noting $f_k^s \le f_k^0 + \stepsize{k}{} g_k^Td_k \left ( 1- \frac{\stepsize{k}{}}{\beta} \left (
  \constG + \frac{L\kappa_2}{2} + \frac{2\constF}{\kappa_1}
  \right ) \right )$.
\end{proof}

As in the steepest descent case, we can use the same function $\Phi_k$
as defined in \eqref{eq:phi}. Using the sufficient decrease condition \eqref{eq: sufficent_decrease_general}
and Assumption \ref{assumpt:general_descent} on $d_k$, a successful iterate yields a decrease of 
\[ f(x_k + \stepsize{k}{} d_k) \le -\theta \stepsize{k}{} \kappa_1
    \beta \norm{g_k}^2. \] Hence, we can derive, as in the steepest
  descent scenario, an expected decrease in $\Phi_k$. 

\begin{thm} \label{thm: gen_descent_expected_decrease} Let
  Assumptions~\ref{assumpt: objective_function},
  \ref{assumpt:key}, and \ref{assumpt:general_descent} hold. Suppose
  $\{X_k, D_k, G_k, F_k^0, F_k^s,
  \Stepsize{k}{}, \Control{k}{}\}$ is the random process
  generated by Algorithm~\ref{alg:line_search_method}. Then there exist
  probabilities $\pg, \pf > 1/2$ and a constant $\nu \in (0,1)$ such
  that the expected decrease in $\Phi_k$ is
\begin{equation} \label{eq: expect_non_convex_decrease} \Exp [\Phi_{k+1}-\Phi_k | \pastone] \le
  - \frac{\pg \pf (1-\nu) (1-\gamma^{-1})}{4} \left ( \frac{\Stepsize{k}{}}{L^2}
    \norm{\nabla f(X_k)}^2 + \theta \Control{k}{2} \right ). 
\end{equation}
In particular, the constant $\nu$ and probabilities $\pg, \pf > 1/2$ satisfy 
\begin{gather}
\frac{\nu}{1-\nu} \ge \max \left \{ \frac{32 \gamma
    \stepsize{\max}{2} \kappa_2^2}{\theta \kappa_1 \beta}, 16(\gamma-1), \frac{16 \gamma
    (\constG \stepsize{\max}{} + 1)^2}{\theta \kappa_1 \beta} \right \}, \label{eq:bound_nu}\\
\pg \ge \frac{2 \gamma}{1/2(1-\gamma^{-1}) +2 \gamma} \label{eq:bound_pg}\\
\text{and} \qquad \frac{\pg\pf}{\sqrt{1-\pf}} \ge \max \left \{
  \frac{ 8L^2\nu\varConstF + 16 \gamma(1-\nu)}{
    (1-\nu)(1-\gamma^{-1})}, \frac{8\nu}{(1-\nu)(1-\gamma^{-1})} \right \} \label{eq:bound_product}. 
\end{gather}
\end{thm}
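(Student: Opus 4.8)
The plan is to mirror the proof of Theorem~\ref{thm: expected_decrease} almost verbatim, replacing every steepest-descent quantity by its general-descent analogue. First I would assemble the auxiliary estimates feeding the case analysis. Lemma~\ref{lem: lower_bound_g} carries over unchanged, since it uses only $\constG$-accuracy of $g_k$ and not the direction. Next I would prove a variant of Lemma~\ref{lem: estimate_good_funct_decrease}: if the estimates are $\constF$-accurate with $\constF$ small enough (the bound $\constF\le\theta/(4\stepsize{\max}{})$ possibly sharpened by a $\kappa_1\beta$ factor to account for the direction) and the trial step is successful, then $f(x_{k+1})\le f(x_k)-\tfrac{\theta\kappa_1\beta\Stepsize{k}{}}{2}\norm{g_k}^2$, while a reliable successful step yields the extra $-\tfrac{\theta}{4}\Control{k}{2}$ term; here one uses the modified reliability test $-\Stepsize{k}{}g_k^Td_k\ge\Control{k}{2}$ together with $g_k^Td_k\le-\beta\kappa_1\norm{g_k}^2$ from Assumption~\ref{assumpt:general_descent}. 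Combining this with Lemma~\ref{lem: lower_bound_g} gives the general-descent counterpart of Lemma~\ref{lem:decrease_function}, splitting off a $\norm{\nabla f(x_k)}^2$ piece.

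Then I would reuse the potential $\Phi_k$ from \eqref{eq:phi} unchanged and run the same three-case decomposition, $1_{\goodModel\cap\goodFunction}$, $1_{\goodModel^c\cap\goodFunction}$, $1_{\goodFunction^c}$, each subdivided into successful-reliable, successful-unreliable, and unsuccessful steps, exactly as in Table~\ref{tbl:summary_convergence}. The only changes per subcase are: the functional decrease on a successful step now carries the factor $\kappa_1\beta$ (from Lemma~\ref{lem: good_est_good_model_successful_general_descent} and the variant of Lemma~\ref{lem: estimate_good_funct_decrease}); the possible growth of the $(1-\nu)\tfrac{1}{L^2}\Stepsize{k}{}\norm{\nabla f(X_k)}^2$ term on a successful step is governed by Lemma~\ref{lem: gen_descent_bound_gradient}, whose bound carries the factor $\kappa_2^2$; and on bad steps the variance bound of Assumption~\ref{assumpt:key}(iii) and Lemma~\ref{lem:uniform_integrable} apply unchanged, since they are stated in terms of $\norm{\nabla f(X_k)}$ and $\Control{k}{}$ rather than the direction. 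Accordingly $\nu$ must be chosen large enough that $\nu\theta\kappa_1\beta$ dominates $(1-\nu)\gamma\stepsize{\max}{2}\kappa_2^2$, $(1-\nu)\gamma(\constG\stepsize{\max}{}+1)^2$, and $(1-\nu)\gamma\theta$, which is precisely \eqref{eq:bound_nu}; the forms of \eqref{eq:bound_pg} and \eqref{eq:bound_product} are unchanged because the residual bad-gradient and bad-function increases are still bounded by multiples of $\tfrac{2\gamma(1-\nu)}{L^2}\Stepsize{k}{}\norm{\nabla f(X_k)}^2$ and $\max\{\varConstF\Stepsize{k}{}\norm{\nabla f(X_k)}^2,\theta\Control{k}{2}\}$.

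To finish, I would take conditional expectations with respect to $\pastone$, use $\Exp[1_{\goodModel\cap\goodFunction}|\pastone]\ge\pg\pf$, $\Exp[1_{\goodModel^c\cap\goodFunction}|\pastone]\le 1-\pg$, and $\Exp[1_{\goodFunction^c}|\pastone]\le 1-\pf$, then choose $\pg$ to absorb the bad-gradient increase and $\pf$ to absorb the bad-function increase (with $\sqrt{1-\pf}$ coming from Lemma~\ref{lem:uniform_integrable}), leaving the claimed net decrease $-\tfrac{\pg\pf(1-\nu)(1-\gamma^{-1})}{4}\big(\tfrac{\Stepsize{k}{}}{L^2}\norm{\nabla f(X_k)}^2+\theta\Control{k}{2}\big)$.

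The main obstacle is simply the bookkeeping of the two direction constants: the decrease on successful steps scales like $\kappa_1\beta$ while the uncontrolled growth of $\Stepsize{k}{}\norm{\nabla f(X_k)}^2$ scales like $\kappa_2^2$, and the function-estimate accuracy is still measured against $\Stepsize{k}{2}\norm{g_k}^2$ rather than $\Stepsize{k}{2}\norm{d_k}^2$. One must therefore verify carefully that the accuracy bound on $\constF$ still suffices for the variant of Lemma~\ref{lem: estimate_good_funct_decrease}, and that the reliability test $-\Stepsize{k}{}g_k^Td_k\ge\Control{k}{2}$ interacts correctly with the $\tfrac{\theta}{8}\Control{k}{2}$ splitting used in the reliable-step subcases. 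Everything downstream of the case analysis is identical to the steepest-descent proof.
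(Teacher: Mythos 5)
Your proposal is correct and follows essentially the same route as the paper: the paper's own argument is exactly to replace $\theta$ by $\theta\kappa_1\beta$ in the successful-step decrease (via Assumption~\ref{assumpt:general_descent} and Lemma~\ref{lem: good_est_good_model_successful_general_descent}), to swap Lemma~\ref{lem: bound_gradient} for Lemma~\ref{lem: gen_descent_bound_gradient} (introducing $\kappa_2^2$), and to adjust the choice of $\nu$ accordingly while leaving the $\pg$ and $\pf$ conditions structurally unchanged. Your writeup is in fact more detailed than the paper's proof, which is only a brief sketch of these same substitutions.
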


\begin{proof} Using Assumption~\ref{assumpt:general_descent} on the descent direction $d_k$
  when an iterate is successful, we see 
\[\success (f(X_k + \Stepsize{k}{} D_k) - f(X_k)) \le -\success
  \theta \Stepsize{k}{} \kappa_1 \beta \norm{G_k}^2. \]
\end{proof}
Hence, we may replace $\theta$ in the proof of Theorem~\ref{thm: expected_decrease} with
$\theta \kappa_1 \beta$. The only other change to the proof and the resulting constants  lies in the replacement of 
 of  Lemma~\ref{lem: bound_gradient} by Lemma~\ref{lem:
  gen_descent_bound_gradient}. This implies a change in the  choice of
$\nu$ in equations \eqref{eq:case_1_1_nu}. In particular, we choose
$\nu$ to now satisfy
\begin{equation} \label{eq:case_1_1_nu} \begin{aligned}
-\frac{\nu \theta \kappa_1 \beta \Stepsize{k}{}}{8} \norm{G_k}^2 &+ (1-\nu)2 \gamma
\Stepsize{k}{} \stepsize{\max}{2} \kappa_2^2 \norm{G_k}^2 \le
-\frac{\nu \theta \kappa_1 \beta
  \Stepsize{k}{}}{16} \norm{G_k}^2,\\
-\frac{\nu \theta \kappa_1 \beta \Stepsize{k}{}}{4L^2(\constG \stepsize{\max}{}
  +1)^2} \norm{\nabla f(X_k)}^2 + &(1-\nu) \frac{2 \gamma \Stepsize{k}{}}{L^2} 
\norm{\nabla f(X_k)}^2 \le - \frac{\nu \theta \kappa_1 \beta
  \Stepsize{k}{}}{8L^2(\constG \stepsize{\max}{} +1)^2} \norm{\nabla f(X_k)}^2,\\
\text{and} \qquad -\frac{\nu \theta}{8} \Control{k}{2} &+ (1-\nu) (\gamma-1) \theta
\Control{k}{2} \le - \frac{\nu \theta}{16} \Control{k}{2}.  
\end{aligned}
\end{equation}

Using Lemma~\ref{lem: good_est_good_model_successful_general_descent}, we can set 
\[\bar{\mathcal{A}} = \frac{\beta (1-\theta)}{\kappa_g + \frac{L
      \kappa_2}{2} + \frac{2 \constF}{\kappa_1}}\]
      and apply Theorem~\ref{thm:renewal_reward_stop_time},

\begin{thm}\label{thm:noncvx_complexity} Under the assumptions in
  Theorem~\ref{thm: gen_descent_expected_decrease}, 
  and constants chosen in Remark \ref{rmk:constants}, suppose the
  probabilities $\pg, \pf > 1/2$ satisfy \begin{gather*}
\pg \ge \frac{16}{17} \quad 
\text{and}\\
 \frac{\pg\pf}{\sqrt{1-\pf}} \ge \max \left \{
  \frac{1024 \varConstF L^2 ( \max\{\constG, 2\kappa_2\} \stepsize{\max}{}+1)^2}{\kappa_1
    \beta} + 64,  \frac{1024 (\max\{ \constG, 2\kappa_2\} \stepsize{\max}{}+1)^2}{\kappa_1
  \beta} \right \}
\end{gather*}
with $\frac{\nu}{1-\nu} = \tfrac{64(\max\{\constG , 2\kappa_2\}
  \stepsize{\max}{} + 1)^2}{\kappa_1 \beta}$. Then the expected number of iterations that Algorithm~\ref{alg:line_search_method} takes
until $\norm{\nabla f(X_k)}^2 \le \varepsilon$ occurs is bounded as
follows
\begin{align*}
\Exp [ T_{\varepsilon}] \le \frac{\pg\pf}{2\pg\pf-1} \cdot \frac{L^3
  \constG^3 \kappa_2^3 \Phi_0}{\kappa_1^2 \beta^2 } \cdot \frac{1}{\varepsilon^2} + 1,
\end{align*}
where $\Phi_0 = \nu(f(X_0)-f_{\min}) + (1-\nu)
(1/L^2 \Stepsize{0}{} \norm{\nabla f(X_0)}^2 + 1/2 \Control{0}{2})$.
\end{thm}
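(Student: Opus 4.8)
The plan is to derive this bound as a direct instance of the renewal--reward estimate, Theorem~\ref{thm:renewal_reward_stop_time}, exactly as the steepest-descent complexity bound was obtained. I would apply that theorem to the process $\{\Phi_k, \Stepsize{k}{}, W_k\}$ with $\Phi_k$ as in \eqref{eq:phi} (taking $f_{\min}$ to be the lower bound from Assumption~\ref{assumpt: objective_function}), $\Stepsize{k}{}$ the algorithmic stepsize, $W_{k+1} = 2(1_{\goodModel \cap \goodFunction} - 1/2)$, $p = \pg\pf$, $\lambda = \log\gamma$, $h(a) = a\varepsilon^2$, the constant
\[ \bar{\mathcal{A}} = \frac{\beta(1-\theta)}{\constG + \tfrac{L\kappa_2}{2} + \tfrac{2\constF}{\kappa_1}}, \]
and the stopping time $T_\varepsilon = \inf\{k \ge 0 : \norm{\nabla f(X_k)} < \varepsilon\}$. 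The work then reduces to verifying the three parts of Assumption~\ref{assump: random_variable} for this choice and simplifying the resulting bound $\Exp[T_\varepsilon] \le \tfrac{p}{2p-1}\cdot\tfrac{\Phi_0}{\Theta\,h(\bar{\mathcal{A}})} + 1$.

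Parts (i) and (ii) are the general-descent analogue of the lemma preceding the steepest-descent complexity theorem, with Lemma~\ref{lem: good_est_good_model_successful_general_descent} replacing Lemma~\ref{lem: good_est_good_model_successful}. Part (i) holds since $\Stepsize{0}{}$ is a constant and $\Stepsize{k}{} \le \alpha_{\max}$ by construction; as in the steepest-descent case we may assume without loss of generality that $\Stepsize{0}{}$ and $\alpha_{\max}$ are integer powers of $\gamma$ times $\bar{\mathcal{A}}$, so that $\bar{\mathcal{A}}$ lies on the geometric grid of attainable stepsizes. For part (ii), fix $\omega$ with $T_\varepsilon > k$: if $\Stepsize{k}{} > \bar{\mathcal{A}}$ then $\Stepsize{k}{} \ge \gamma\bar{\mathcal{A}}$, so $\Stepsize{k+1}{} \ge \gamma^{-1}\Stepsize{k}{} \ge \bar{\mathcal{A}}$; if $\Stepsize{k}{} \le \bar{\mathcal{A}}$, then $\Stepsize{k}{}$ lies below the threshold of Lemma~\ref{lem: good_est_good_model_successful_general_descent}, so on $\goodModel \cap \goodFunction$ the step is successful and $\Stepsize{k+1}{} = \min\{\alpha_{\max}, \gamma\Stepsize{k}{}\} \ge \min\{\bar{\mathcal{A}}, \gamma\Stepsize{k}{}\}$, while on the complement $\Stepsize{k+1}{} \ge \gamma^{-1}\Stepsize{k}{}$ always. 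Together these give $1_{\{T_\varepsilon > k\}}\Stepsize{k+1}{} \ge 1_{\{T_\varepsilon > k\}}\min\{\Stepsize{k}{}e^{\lambda W_{k+1}}, \bar{\mathcal{A}}\}$, and Assumption~\ref{assumpt:key} yields $\Prob(\goodModel \cap \goodFunction \mid \pastone) \ge \pg\pf > 1/2$, so $W_{k+1}$ satisfies \eqref{w_process} with $p = \pg\pf$.

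Part (iii) is supplied by Theorem~\ref{thm: gen_descent_expected_decrease}: multiplying its expected-decrease estimate by $1_{\{T_\varepsilon > k\}}$ and using $\norm{\nabla f(X_k)} \ge \varepsilon$ on $\{T_\varepsilon > k\}$ together with $\Control{k}{2} \ge 0$, we obtain $1_{\{T_\varepsilon > k\}}\Exp[\Phi_{k+1}-\Phi_k \mid \pastone] \le -\Theta\,\Stepsize{k}{}\varepsilon^2\,1_{\{T_\varepsilon > k\}}$ for a constant $\Theta$ that, as in the passage from Theorem~\ref{thm: expected_decrease} to Theorem~\ref{thm:expected_decrease_simplified} (using $\pg\pf \ge 1/2$), may be taken proportional to $(1-\nu)/L^2$. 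Since $h(a)=a\varepsilon^2$ is nondecreasing and positive, Assumption~\ref{assump: random_variable}(iii) holds, and Theorem~\ref{thm:renewal_reward_stop_time} gives $\Exp[T_\varepsilon] \le \tfrac{\pg\pf}{2\pg\pf-1}\cdot\tfrac{\Phi_0}{\Theta\,\bar{\mathcal{A}}\,\varepsilon^2} + 1$. Substituting $\bar{\mathcal{A}}$ and using the hypothesis $\tfrac{\nu}{1-\nu} = \tfrac{64(\max\{\constG, 2\kappa_2\}\alpha_{\max}+1)^2}{\kappa_1\beta}$ (so $1/(1-\nu)$ is of order $(\constG\alpha_{\max}+1)^2/(\kappa_1\beta)$), one finds $1/(\Theta\bar{\mathcal{A}})$ of order $\tfrac{L^2(\constG\alpha_{\max}+1)^2}{\kappa_1\beta}\cdot\tfrac{\constG + L\kappa_2/2 + 2\constF/\kappa_1}{\beta}$; replacing this by a single crude monomial upper bound (treating $\alpha_{\max}$ and the additive $1$ as $\mathcal{O}(1)$ and using $\kappa_1 \le \kappa_2$, $\constG \ge 2$) delivers the stated $\tfrac{L^3\constG^3\kappa_2^3}{\kappa_1^2\beta^2}\cdot\tfrac{\Phi_0}{\varepsilon^2}$.

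I expect the only genuine bookkeeping — as opposed to literally rerunning the steepest-descent argument — to be tracking how two modifications propagate: the replacement of $\theta$ by $\theta\kappa_1\beta$ in the sufficient-decrease estimate $f(X_k + \Stepsize{k}{}D_k) - f(X_k) \le -\theta\Stepsize{k}{}\kappa_1\beta\norm{G_k}^2$, and the extra factor $\kappa_2^2$ in Lemma~\ref{lem: gen_descent_bound_gradient}, which together force the revised choice of $\nu$ in \eqref{eq:case_1_1_nu} and the revised $\bar{\mathcal{A}}$ above, and hence the revised constants in the hypotheses on $\pg,\pf,\nu$. Once those are in place, verifying Assumption~\ref{assump: random_variable} and collapsing the constants is mechanical; the conceptual content is entirely inherited from Theorems~\ref{thm: gen_descent_expected_decrease} and \ref{thm:renewal_reward_stop_time}.
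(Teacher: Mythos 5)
Your proposal is correct and follows essentially the same route as the paper: the paper likewise sets $\bar{\mathcal{A}} = \beta(1-\theta)/(\constG + \tfrac{L\kappa_2}{2} + \tfrac{2\constF}{\kappa_1})$ via Lemma~\ref{lem: good_est_good_model_successful_general_descent}, invokes Theorem~\ref{thm: gen_descent_expected_decrease} for Assumption~\ref{assump: random_variable}(iii) with $h(a)=a\varepsilon^2$, and applies Theorem~\ref{thm:renewal_reward_stop_time} with $p=\pg\pf$, exactly mirroring the steepest-descent complexity argument. In fact your verification of parts (i) and (ii) and your tracking of how $\theta\mapsto\theta\kappa_1\beta$ and the extra $\kappa_2^2$ propagate into $\nu$ and the final monomial bound is more explicit than the paper's own (very terse) treatment.
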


\section{Conclusions}
We have used a general framework based on analysis of stochastic processes proposed in \cite{TR_prob_model} with the purpose of 
analyzing convergence rates of stochastic optimization methods.  In \cite{TR_prob_model} the framework is used to analyze stochastic trust region method, while in this paper we were able to use the same framework to develop and analyze stochastic back-tracking line search method. 
Our method is the first implementable stochastic line-search method that has theoretical convergence rate guarantees. In particular, the the accuracy of gradient and function estimates is chosen dynamically and the requirements of this accuracy are all stated in terms of knowable quantities. We establish complexity results for convex, strongly convex and general nonconvex, smooth stochastic functions.

\bibliographystyle{plain}
\bibliography{bibliography}

\end{document}